\documentclass[a4paper]{amsart}
\usepackage[utf8]{inputenc}

\usepackage{amsmath,amssymb,amsthm}
\usepackage{eucal}
\usepackage{bm}
\usepackage{graphicx,color}
\usepackage[british]{babel}
\usepackage{esint}
\usepackage{subcaption}

\newtheorem{theorem}{Theorem}
\newtheorem{lemma}[theorem]{Lemma}
\theoremstyle{definition}
\newtheorem{remark}[theorem]{Remark}
\newtheorem{example}[theorem]{Example}

\numberwithin{equation}{section}
\numberwithin{theorem}{section}

\newcommand{\R}{\mathbb{R}}
\newcommand{\pw}{\mathrm{pw}}
\newcommand{\nc}{\mathrm{NC}}
\newcommand{\osc}{\mathrm{osc}}
\newcommand{\tri}{\mathcal{T}}
\newcommand{\cof}{\operatorname{cof}}
\newcommand{\ennorm}[1]{\lvert\!\lvert\!\lvert #1 \rvert\!\rvert\!\rvert}

\title{A posteriori error estimates for a modified Morley FEM}
\author{A.K.~Dond}
\author{D.~Gallistl}
\author{S.~Nayak}
\author{M.~Schedensack}

\thanks{AKD acknowledges the financial support of ANRF under
        Grant No. ANRF/ARG/009285.}
\thanks{DG was supported by the European Research Council,
        StG \emph{DAFNE}, ID 891734.}

\address[A.K.~Dond, S.~Nayak]{Indian Institute of Science Education and Research,
Thiruvanan\-thapuram, India}
\email{ashadond (at) iisertvm.ac.in}
\email{subhamnayak828 (at) gmail.com}
\address[D.~Gallistl]{Institut f\"ur Mathematik, Universit\"at Jena,
         07743 Jena, Germany}
\email{dietmar.gallistl (at) uni-jena.de}
\address[M.~Schedensack]{Mathematisches Institut, Universit\"at Leipzig,
            PF 10 09 20, 04009 Leipzig, Germany}
\email{mira.schedensack (at) math.uni-leipzig.de}

\date{}%

\begin{document}

\begin{abstract}
Residual-based a~posteriori error estimators are derived for the 
modified Morley FEM, proposed by Wang, Xu, Hu 
[J. Comput. Math, 24(2), 2006],
for the singularly perturbed biharmonic equation and the nonlinear 
von K\'arm\'an equations. The error estimators are proven to be
reliable and efficient. Moreover, an adaptive algorithm 
driven by these error estimators is investigated in numerical
experiments.
\end{abstract}

\keywords{
a~posteriori,
Morley FEM,
singularly perturbed fourth-order problem,
von K\'arm\'an equations,
adaptivity}

\subjclass{65N12,  %Stability and convergence of numerical methods
  65N15,  %Error bounds
  65N30,  %Finite elements, Rayleigh-Ritz and Galerkin methods, finite methods
  74K20%   %Mechanics of deformable solids:  Thin bodies, structures: Plates
}

\maketitle

\section{Introduction}

Conforming finite element methods (FEM) for fourth-order 
problems require globally $\mathcal{C}^1$ continuity across 
element boundaries such that relatively high polynomial 
degrees have to be used. 
The Morley FEM  \cite{Morley1968} overcomes this drawback
by breaking with the conformity of the trial space
and enforcing the continuity in the vertices of the 
underlying triangles and the normal derivatives on the 
faces only. However, these discrete functions are in 
general not only not in the Sobolev space $H^2(\Omega)$,
but also not in the Sobolev space $H^1(\Omega)$.
While it is well-understood that the Morley FEM 
converges for the biharmonic problem \cite{Ciarlet1978,Gudi2010},
the authors of \cite{NilssenTaiWinther2001} show 
that it may diverge for second order problems.

This paper proves a~posteriori error estimates for two 
fourth-order problems with second-order terms.
The first problem is the singularly perturbed 
fourth-order problem
in a bounded, open, polygonal Lipschitz-domain $\Omega\subseteq \R^2$:
Seek $u\in H^2_0(\Omega)$ with 
\begin{align}\label{e:intro_sing_pert_strong_form}
    \varepsilon^2 \Delta^2 u -\Delta u = f 
    \qquad \text{in }\Omega,
\end{align}
where $0<\varepsilon\ll 1$ and $f\in L^2(\Omega)$.
In \cite{WangXuHu2006} (with a generalisation to 3D in 
\cite{WangMeng2007}), the authors define
a modified Morley FEM by employing  
the nodal interpolation operator of the Morley 
finite element function in the second-order term.
As the nodal evaluations are degrees of freedom of 
the Morley finite element functions, this interpolation 
operator is easily computable.
The authors of \cite{WangXuHu2006}
bound the error a~priori by $h^{1/2}$, $h$ being
the maximal mesh size, where the 
reduced convergence rate results from the boundary layer 
of the solution. This motivates adaptive mesh refinement.

Adaptive mesh-refinement is usually driven by a reliable 
and efficient error estimator $\eta$, which means that 
the error estimator is (up to some higher-order terms) 
an upper and lower bound for the error.
In \cite{ZhangWang2008} an a posteriori error estimator 
is derived for the modified Morley FEM for the singularly 
perturbed fourth-order equation, but its efficiency relies 
on the smoothness of the exact solution.
For a mixed FEM on convex domains, \cite{DuLinZhang2022} derive a~posteriori 
error estimates.
In \cite{GallistlTian2024} the authors derive a~posteriori 
error estimates under some abstract assumptions that include 
a wide class of nonconforming FEMs, but the assumptions 
do not apply to the modified Morley FEM.

In the first part of this paper, we derive an a~posteriori error estimator 
$\eta$ and prove that it is reliable and efficient in the sense of 
\begin{align*}
    &\ennorm{u-u_h}_{\varepsilon,\pw} + \lVert \nabla(u-I_h u)\rVert_{L^2(\Omega)}
    \lesssim \eta \\
    &\qquad\qquad\qquad
    \lesssim \ennorm{u-u_h}_{\varepsilon,\pw} 
      + \lVert \nabla(u-I_h u)\rVert_{L^2(\Omega)}
      + \osc_\varepsilon(f,\tri)
\end{align*}
for the singular perturbed
fourth-order equation, where $I_h$ denotes the nodal 
interpolation operator. The precise definition of the energy
norm $\ennorm{\bullet}_{\varepsilon,\pw}$ and the oscillations
can be found in Section~\ref{s:sing}.
The proof relies on techniques developed in 
\cite{Verfuerth1998} for the singularly perturbed 
reaction-diffusion equation.

The second part of this paper is concerned with the 
nonlinear von K\'arm\'an equations, which seek in their strong form 
$\psi_1,\psi_2\in H^2_0(\Omega)$ with 
\begin{align*}
    \Delta^2 \psi_1 &= [\psi_1,\psi_2] + f,\\
    \Delta^2 \psi_2 &= -\frac{1}{2}[\psi_1,\psi_1]
\end{align*}
in the bounded polygonal Lipschitz-domain $\Omega\subseteq \R^2$.
Here, the Monge-Amp\`ere form or von K\'arm\'an bracket $[\bullet,\bullet]$
is defined as 
\begin{align*}
    [\theta,\chi] 
    := \theta_{xx}\chi_{yy} + \theta_{yy}\chi_{xx} - 2 \theta_{xy}\chi_{xy}
    = \cof(D^2\theta):D^2\chi, 
\end{align*}
where $\cof(A) = (A_{22}, -A_{12}; -A_{21}, A_{11})$ denotes the 
cofactor of a two dimensional matrix $A$ and $:$ is the Frobenius 
inner product.
This von K\'arm\'an bracket introduces a nonlinearity in the equations,
which in the weak form can be equivalently included by a term like 
\begin{align}\label{e:intro_b}
    \int_\Omega \cof(D^2 \psi_1):D^2\psi_2 \,\theta\,dx
    = \int_\Omega (\cof(D^2 \psi_1)\nabla\psi_2)\cdot \nabla\theta\,dx
\end{align}
for all test functions $\theta\in H^2_0(\Omega)$.
However, this equality is no longer true for nonconforming Morley functions
and \cite{MallikNataraj2016} employs the second version in a 
Morley FEM for the von K\'arm\'an equations, but without reliable and efficient 
a~posteriori error estimates.
To overcome the difficulties in the a~posteriori error estimates,
\cite{CarstensenMallikNataraj2020} uses the first integral in 
\eqref{e:intro_b} for a nonconforming Morley FEM, such that the 
nonlinearity can be treated similar to an $L^2$ term instead 
of a Laplace-type term. However, in an optimal control problem 
in \cite{ChowdhuryDondNatarajShylaja2022}, the missing 
symmetry between the second and the third variable in the first 
integral in \eqref{e:intro_b} caused some problems in the 
a~posteriori error analysis for the adjoint equation.

The second part of this paper uses the second version in \eqref{e:intro_b}
and adopts the ideas of the first part,
namely to include the nodal interpolation operator in this integral
(the nonlinearity) to derive a reliable and efficient error estimator.

\medskip 
The remaining parts of this article are organised as follows:
Section~\ref{s:prelim} defines the discrete spaces and 
operators. Section~\ref{s:sing} defines the singularly perturbed
biharmonic problem, its discretization and the error estimator 
for this problem and proves the reliability and efficiency 
of this error estimator, while Section~\ref{s:vK} 
is devoted to the von K\'arm\'an equations and its reliable 
and efficient error estimator. 
Section~\ref{s:num} concludes the paper with numerical 
experiments.

\medskip
Standard notation on Lebesgue and Sobolev spaces
applies throughout this paper. The $L^2$ inner product is denoted
by
$(v,w)_{L^2(\omega)}$ for a domain $\omega\subseteq \Omega$
and $\lVert\bullet\rVert_\omega:=\lVert\bullet\rVert_{L^2(\omega)}$,
while the $H^k$ norm over $\omega\subseteq \Omega$
is denoted by
$\lVert\bullet\rVert_{H^k(\omega)}$.
The notation $A\lesssim B$ abbreviates $A\leq C B$ for some
constant $C$ that is independent of the mesh size and
the singular perturbation parameter $\varepsilon>0$.
The notation $A\approx B$ abbreviates $A\lesssim B\lesssim A$.

\section{Preliminaries}\label{s:prelim}

Let $\tri$ be a regular triangulation of the open, bounded, and connected
Lipschitz-polygon $\Omega$ from a shape-regular
family. 
Let $\mathcal{N}$ denote the set of vertices in $\tri$ and 
$\mathcal{N}(\Omega)$ the set of interior vertices.
Let $\mathcal{F}$ denote the set of faces of $\tri$ and
$\mathcal{F}(\Omega)$ the set of interior faces.
For any interior face $F\in\mathcal{F}(\Omega)$ we fix
the two triangles $T_+,T_-\in\tri$ with $F=T_+\cap T_-$,
while for a boundary face, let $T_+\in\tri$ be the unique
triangle with $F\subseteq T_+$.
Then, let $\nu_F=\nu_{T_+}\vert_F$ be the unit normal
on $F$ and $\tau_F = (0, -1; 1, 0) \nu_F$ denotes the unit tangent on $F$.
Furthermore, for an interior face $F\in\mathcal{F}(\Omega)$,
let $[v]_F:=v\vert_{T_+}-v\vert_{T_-}$ denote the jump across $F$
and for a boundary face $F\in\mathcal{F}\setminus\mathcal{F}(\Omega)$,
let $[v]_F:=v\vert_{T_+}$.
For an interior face $F\in\mathcal{F}(\Omega)$,
let $\omega_F:=T_+\cup T_-$, while for a boundary face $F$,
define the face patch by $\omega_F=T_+$.
The set of faces of a triangle $T$ reads $\mathcal F(T)$.
For $T\in\tri$ define the element patch 
$\omega_T:=\bigcup_{F\in\mathcal F(T)}\omega_F$.
The enlarged patch of an element $T$ is given by 
$\Omega_T:=\bigcup \{K\in\tri\,\vert\, K\cap T\neq \emptyset\}$.
The patch of a vertex $z$ of $\tri$ is defined by 
$\omega_z:=\bigcup \{T\in\tri\,\vert\, z\in T\}$.
For a face $F$, the set
$\Omega_F = \cup_{z\in \mathcal N \cap F} \omega_{z}$
is the union of the nodal patches related to the vertices
belonging to the face $F$.
The diameter of $T$ and $F$ is denoted by $h_T$ and $h_F$,
respectively. The piecewise constant mesh-size function
$h_\tri$ is defined by $h_\tri|_T=h_T$.

The broken Sobolev space $H^k(\tri)$ is defined by 
\begin{align*}
    H^k(\tri):=\{v\in L^2(\Omega)\,\vert\,
      \forall T\in\tri:\, v\vert_T\in H^2(T)\}.
\end{align*}

Let $P_k(\tri)$ denote the space
of piecewise polynomials with respect to $\tri$
of degree not larger than $k$
and let $\Pi_k$ denote the $L^2$ projection to $P_k(\tri)$.
Further define
\begin{align*}
	S^k(\tri):=P_k(\tri)\cap H^1(\Omega)
	\qquad \text{and} \qquad
	S^k_0(\tri):=P_k(\tri)\cap H^1_0(\Omega).
\end{align*}

Define the space of Morley finite element functions 
\begin{align*}
    \mathcal{M}(\tri)
    :=\left\{v_h\in P_2(\tri)\;\left\vert\;
        \begin{aligned}
            &v_h\text{ is continuous at }\mathcal{N},\\
            &\forall F\in\mathcal{F}(\Omega):\;
            [\nabla v_h(\operatorname{mid}(F))]_F=0
        \end{aligned}
        \right\}\right.,\\
    \mathcal{M}_0(\tri)
    :=\left\{v_h\in \mathcal{M}(\tri)\;\left\vert\;
        \begin{aligned}
            &\forall z\in\mathcal{N}\setminus\mathcal{N}(\Omega):\; v_h(z)=0,\\
            &\forall F\in\mathcal{F}\setminus \mathcal{F}(\Omega):\;
            \nabla v_h(\operatorname{mid}(F))=0
        \end{aligned}
        \right\}\right. .
\end{align*}

\paragraph{Morley interpolation operator.}
Let $I_\mathcal{M}:H^2_0(\Omega)\to\mathcal{M}_0(\tri)$ denote the 
Morley interpolation operator with the property that 
$(I_\mathcal{M}v)(z)=v(z)$,  
the integral mean property
\begin{align}\label{e:integral_mean_property}
    (D^2_\pw I_\mathcal{M} v)\vert_T 
    = \fint_T D^2 v\,dx 
    \qquad \text{for all }T\in\tri \text{ and all } v\in H^2_0(\Omega),
\end{align}
and the approximation and stability estimates 
\cite{HuShiXu2012}
\begin{equation}\label{e:estimate_Morley_interpolation1}
    \sum_{j=0}^2\lVert h_\tri^{-j}D_\pw^{2-j}(v-I_\mathcal{M}v)\rVert_{L^2(\Omega)}
        \lesssim \lVert D^2 v\rVert_{L^2(\Omega)}.
\end{equation}

\paragraph{Quasi-interpolation operator.}
Furthermore, let $I_\mathrm{qi}:H^2_0(\Omega)\to \mathcal{M}_0(\tri)$
denote a quasi-interpolation that follows a canonical construction
\cite{Oswald1994}:
A function is first mapped by the  $L^2$ projection 
to the piecewise quadratic polynomials. This is concatenated
with averaging the degrees of freedom, which
assigns as nodal value to each interior vertex $z$ the arithmetic
mean of the nodal values at $z$ on each triangle containing
$z$; and as normal derivative at the midpoint of any interior 
the average
of the two normal derivatives at that point from the two
neighbouring elements.
With known interpolation bounds for
averaging operators \cite{Oswald1994,DiPietroErn2012}
it can be proved that it satisfies
\begin{equation}\label{e:quasi_interpolation}
\begin{aligned}
    \sum_{j=0}^2 
    \lVert h_\tri^{-j}D_\pw^{2-j} (v-I_\mathrm{qi} v)\rVert_{L^2(\Omega)}
    &\lesssim \lVert D^2 v\rVert_{L^2(\Omega)},\\
    \lVert h_\tri^{-1}(v-I_\mathrm{qi} v)\rVert_{L^2(\Omega)}
    +\lVert \nabla_\pw (v-I_\mathrm{qi} v)\rVert_{L^2(\Omega)}
    &\lesssim \lVert \nabla v\rVert_{L^2(\Omega)}.
\end{aligned}
\end{equation}
See also \cite[Chapter~10.6]{BrennerScott2008}.

\paragraph{Nodal interpolation operator.}
Furthermore, let $I_h:H^2(\Omega)\cup \mathcal{M}(\tri)\to S^1(\tri)$ 
denote the nodal interpolation 
operator, which is $H^2$ stable and has the
approximation properties \cite{BrennerScott2008}
\begin{align}\label{e:nodal_interpolation_approx}
    \sum_{j=0}^2 \lVert h_\tri^{-j}D^{2-j}(v-I_h v)\rVert_{L^p(T)} 
    \leq C(p) \lVert D^2 v\rVert_{L^p(T)}.
\end{align}

\paragraph{Enriching operator.}
Let $J:\mathcal{M}_0(\tri)\to H^2_0(\Omega)$ denote an enriching operator 
with the properties \cite{BrennerGudiSung2010}
\begin{align}\label{e:enriching_stab_approx}
    \sum_{j=0}^2 \lVert h_\tri^{-j}D^{2-j}_\pw(v_h-Jv_h)\rVert_{L^2(\Omega)}
   \lesssim \lVert D^2_\pw v_h\rVert_{L^2(\Omega)}
\end{align}
and 
\begin{align}\label{e:prop_enriching_nodal}
    (J v_h) (z) = v_h(z)
\end{align}
and with the (local) estimates  
\begin{equation}\label{e:enriching_apost}
\begin{aligned}
    \lVert D_\pw^2(v_h-Jv_h)\rVert_{L^2(T)}
    &\approx \sqrt{\sum_{F\in\mathcal{F}, F\cap T\neq\emptyset} 
        h_F^{-1} \lVert [\nabla_\pw v_h]_F\rVert_{L^2(F)}^2},\\
    h_F^{-1} \lVert [\nabla_\pw v_h]_F\rVert_{L^2(F)}
    &\lesssim \inf_{v\in H^2_0(\Omega)} \lVert D_\pw^2(v_h-v)\rVert_{L^2(\omega_F)}
    ,
\end{aligned}
\end{equation}
cf.~\cite{BrennerGudiSung2010}.

\section{Singularly perturbed biharmonic problem}
\label{s:sing}

This section considers the singularly perturbed biharmonic problem
\eqref{e:intro_sing_pert_strong_form}.
The bilinear form $a_\varepsilon:H^2_0(\Omega)\times H^2_0(\Omega)\to\R$
is defined by 
\begin{align*}
    a_\varepsilon(u,v):=\varepsilon^2 \int_\Omega D^2u:D^2 v\,dx
       + \int_\Omega \nabla u\cdot\nabla v\,dx
    \qquad \text{for all }u,v\in H^2_0(\Omega).
\end{align*}
This bilinear form can be extended to
$a_{\varepsilon,\pw}:H^2(\tri)\times H^2(\tri)\to\R$,
which is defined by the piecewise
application of the differential operators, i.e.,
\begin{align*}
    a_{\varepsilon,\pw}(u,v):=\varepsilon^2 \int_\Omega D_\pw^2u:D_\pw^2v\,dx
       + \int_\Omega \nabla_\pw u\cdot\nabla_\pw v\,dx
    \quad \text{for }u,v\in H^2(\tri).
\end{align*}
For the discretization define the discrete 
bilinear form
$a_{\varepsilon,h}:\mathcal{M}(\tri)\times \mathcal{M}(\tri)\to\R$
that includes the nodal interpolation operator $I_h$ on continuous
$P_1$ elements in the gradient terms
\cite{WangXuHu2006}
by 
\begin{align*}
    a_{\varepsilon,h}(u_h,v_h)
    :=\varepsilon^2 \int_\Omega D_\pw^2u_h:D_\pw^2v_h\,dx
       + \int_\Omega \nabla I_h u_h\cdot\nabla I_h v_h\,dx
\end{align*}
for all $u_h,v_h\in \mathcal{M}(\tri)$.
The corresponding norms are defined as 
\begin{align*}
    \ennorm{v}_\varepsilon:=a_\varepsilon(v,v)^{1/2},
    \qquad 
    \ennorm{v}_{\varepsilon,\pw}:= a_{\varepsilon,\pw}(v,v)^{1/2}
\end{align*}
and the respective local norm on patch-like subdomains
$\omega\subseteq\Omega$ as 
\begin{align*}
    \ennorm{v}_{\varepsilon,\pw,\omega}^2
    :=\varepsilon^2 \int_\omega \lvert D_\pw^2v\rvert^2\,dx
            + \int_\omega \vert \nabla_\pw v\rvert^2\,dx
        \quad \text{for all }v\in H^2(\tri(\omega)),      
\end{align*}
where $\tri(\omega)\subseteq\tri$ denotes the set of triangles covering
$\overline\omega.$

The weak formulation of \eqref{e:intro_sing_pert_strong_form} 
seeks $u\in H^2_0(\Omega)$ with 
\begin{align}\label{e:sing_pert_weak_form}
    a_\varepsilon(u,v)=(f,v)_{L^2(\Omega)} 
    \qquad \text{for all }v\in H^2_0(\Omega),
\end{align}
while the discretization seeks $u_h\in \mathcal{M}_0(\tri)$
with 
\begin{align}\label{e:sing_pert_discrete_problem}
    a_{\varepsilon,h}(u_h,v_h) = (f,I_h v_h)_{L^2(\Omega)}
    \qquad \text{for all }v_h\in \mathcal{M}_0(\tri).
\end{align}
A~priori error estimates for this modified Morley FEM 
can be found in \cite{WangXuHu2006}, while
\cite{ZhangWang2008} defines an a~posteriori error estimator,
which is reliable, but efficiency does only hold up to
higher-order Sobolev norms of the exact solution.

Let $\kappa_T:=\min\{1,h_T/\varepsilon\}$ and 
define the local error estimator contributions
\begin{align*}
    \mu_\nc(T)&:=\sqrt{\sum_{F\in\mathcal{F}(T)}
        (\varepsilon/\kappa_T)\, \lVert [\nabla u_h]_F\rVert_{L^2(F)}^2 },\\
    \mu_{I_h}(T)&:= \lVert\nabla(u_h-I_h u_h)\rVert_{L^2(T)},\\
    \eta_f(T)&:= \lVert h_T\kappa_T f\rVert_{L^2(T)},\\
    \eta_1(T)&:= \sqrt{\sum_{F\in\mathcal{F}(T)\cap \mathcal{F}(\Omega)}
            \varepsilon^{3}\kappa_T
               \lVert [D^2_\pw u_h\,\nu]_F\rVert_{L^2(F)}^2
          },\\
    \eta_2(T)&:=\sqrt{\sum_{F\in\mathcal{F}(T)\cap \mathcal{F}(\Omega)}
            h_T\kappa_T^2
               \lVert [\nabla I_h u_h\cdot\nu]_F\rVert_{L^2(F)}^2
          },\\
    \eta(T)&:= \sqrt{\mu_\nc^2(T)+\mu_{I_h}^2(T)+\eta_f^2(T)
        +\eta_1^2(T) + \eta_2^2(T)}, %+ \mu^2(T)
\end{align*}
and the global error estimator
\begin{align*}
        \eta&:=\sqrt{\sum_{T\in\tri} \eta^2(T)}.
\end{align*}

The following theorem proves the reliability of the error estimator.

\begin{theorem}[reliability]\label{t:sp_reliability}
    The exact solution $u\in H^2_0(\Omega)$ to \eqref{e:sing_pert_weak_form} 
    and the discrete solution $u_h\in \mathcal{M}_0(\tri)$ 
    to \eqref{e:sing_pert_discrete_problem} satisfy
    \begin{align*}
        \ennorm{u-u_h}_{\varepsilon,\pw}^2 
        + \lVert\nabla(u-I_h u_h)\rVert_{L^2(\Omega)}^2
        \lesssim \eta^2.
    \end{align*}
\end{theorem}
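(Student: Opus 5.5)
We reduce the error to a conforming part and handle it by a residual argument. Let $v:=u-Ju_h\in H^2_0(\Omega)$. By the triangle inequality,
\begin{align*}
\ennorm{u-u_h}_{\varepsilon,\pw}+\lVert\nabla(u-I_hu_h)\rVert
\lesssim \ennorm{v}_\varepsilon + \ennorm{u_h-Ju_h}_{\varepsilon,\pw}+\lVert\nabla(u_h-I_hu_h)\rVert .
\end{align*}
The last term is $\mu_{I_h}$.

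\emph{Nonconforming part.} For $\ennorm{u_h-Ju_h}_{\varepsilon,\pw}$ we use \eqref{e:enriching_apost}. The $\varepsilon^2$-part is bounded by $\sum\varepsilon^2h_F^{-1}\lVert[\nabla u_h]_F\rVert^2$. For the gradient part we have two bounds. One is $h_T^2$ times the Hessian bound, i.e. $\sum h_F\lVert[\nabla u_h]\rVert^2$. The other is the trivial bound $\lVert\nabla_\pw(u_h-Ju_h)\rVert_T^2\lesssim\sum h_F^{-1}\lVert[\nabla u_h]\rVert^2\cdot h_T^2$, obtained from the $j=1$ term of \eqref{e:enriching_stab_approx} localized. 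Combining gives the weight $\varepsilon^2/h_T+h_T=(\varepsilon/\kappa_T)\cdot(\varepsilon\kappa_T/h_T+h_T\kappa_T/\varepsilon)\lesssim\varepsilon/\kappa_T$; one checks the cases $h_T\le\varepsilon$ and $h_T>\varepsilon$ separately. If $h_T>\varepsilon$, the $\varepsilon^2$-term is $\le\varepsilon$, but the gradient term $h_T$ exceeds $\varepsilon/\kappa_T=h_T$ only by a constant, so it is fine. Hence this part is $\lesssim\mu_\nc$.

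\emph{Conforming part.} Here $\ennorm{v}_\varepsilon^2=a_\varepsilon(u,v)-a_{\varepsilon,\pw}(Ju_h,v)=(f,v)-a_{\varepsilon,\pw}(u_h,v)+a_{\varepsilon,\pw}(u_h-Ju_h,v)$. The last term is bounded by $\mu_\nc\ennorm{v}_\varepsilon$. Replace $\nabla_\pw u_h$ by $\nabla I_hu_h$ at cost $\mu_{I_h}\ennorm v_\varepsilon$. Set $v_h:=I_{\rm qi}v$ and subtract the discrete equation $(f,I_hv_h)=a_{\varepsilon,h}(u_h,v_h)$. This gives the residual
\begin{align*}
(f,v-I_hv_h)-\varepsilon^2(D^2_\pw u_h,D^2_\pw(v-v_h))-(\nabla I_hu_h,\nabla(v-I_hv_h)).
\end{align*}
We integrate by parts elementwise. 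Since $u_h$ is piecewise quadratic and $I_hu_h$ piecewise affine, there are no volume terms except $f$. The Hessian term gives jumps $[D^2u_h\nu]$ tested with $\nabla(v-v_h)$; because of the Morley edge-midpoint conditions, only normal-normal/tangential pieces of face means remain. Using the mean property and trace inequalities, this is bounded by $\eta_1$ times $\varepsilon\lVert\ldots\rVert$. The Laplace term gives $[\nabla I_hu_h\cdot\nu]_F$ tested with $v-I_hv_h$; note that $v-I_hv_h$ vanishes at vertices since $v_h(z)=v(z)$ for nodal averaging only approximately. Here we must choose the quasi-interpolant carefully, or add and subtract $I_hv$.

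\emph{Robustness and the main obstacle.} To obtain $\kappa_T$-robust weights we follow Verf\"urth's argument for reaction-diffusion. We bound $\lVert v-I_hv_h\rVert_T$ by $\min\{h_T\lVert\nabla v\rVert,h_T^2\lVert D^2v\rVert\}\lesssim h_T\kappa_T\ennorm v_{\varepsilon,\Omega_T}$ using \eqref{e:quasi_interpolation} (both lines) and \eqref{e:nodal_interpolation_approx}. We then use the trace inequality $\lVert w\rVert_F^2\lesssim h_F^{-1}\lVert w\rVert_T^2+\lVert w\rVert_T\lVert\nabla w\rVert_T$, which gives $\lVert w\rVert_F\lesssim h_T^{1/2}\kappa_T\ennorm v$ for the Laplace jumps (weight $\eta_2$). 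Similarly, $\varepsilon\lVert\nabla(v-v_h)\rVert_F\lesssim\varepsilon^{1/2}\kappa_T^{1/2}\ennorm v$ for the Hessian jumps (weight $\eta_1$), and $\eta_f$ covers the volume term. The main obstacle is exactly this $\varepsilon$-robust trace estimate for $\nabla(v-v_h)$: it interpolates between $\lVert\nabla(v-v_h)\rVert\lesssim\min\{\lVert\nabla v\rVert,h\lVert D^2v\rVert\}$ and $\lVert D^2(v-v_h)\rVert\lesssim\lVert D^2v\rVert$. We also need the consistency of $I_hv_h$ versus $I_hv$, which we handle by the stability of $I_h$ on Morley functions together with \eqref{e:quasi_interpolation}. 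Collecting all terms, applying Cauchy--Schwarz, and using finite overlap of the patches gives $\ennorm v_\varepsilon\lesssim\eta$, which proves the theorem.
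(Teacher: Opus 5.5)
There is a genuine gap, and it sits in the nonconforming part, not where you put the main obstacle. Your residual part (the $f$ term, the jumps $[D^2_\pw u_h\,\nu]_F$ against $\nabla_\pw(v-I_{\mathrm{qi}}v)$, the jumps $[\nabla I_hu_h\cdot\nu_F]_F$ against $v-I_hI_{\mathrm{qi}}v$, and swapping $\nabla_\pw u_h$ for $\nabla I_hu_h$ at the price $\mu_{I_h}$) matches Steps~2--5 of the paper and gives the right robust weights. Vanishing of $v-I_hI_{\mathrm{qi}}v$ at vertices is not needed there.

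In the nonconforming part, for $h_T>\varepsilon$ you have $\kappa_T=\min\{1,h_T/\varepsilon\}=1$, so $\varepsilon/\kappa_T=\varepsilon$, not $h_T$. Your two bounds for $\lVert\nabla_\pw(u_h-Ju_h)\rVert_{L^2(T)}^2$ are in fact the same bound, $\sum_F h_F\lVert[\nabla_\pw u_h]_F\rVert_{L^2(F)}^2$. Hence the factor $\varepsilon\kappa_T/h_T+h_T\kappa_T/\varepsilon=\varepsilon/h_T+h_T/\varepsilon$ is unbounded. You only get $\ennorm{u_h-Ju_h}_{\varepsilon,\pw}\lesssim(1+\max_{T\in\tri}h_T/\varepsilon)^{1/2}\mu_\nc$, and the same non-robust factor enters your bound for $a_{\varepsilon,\pw}(u_h-Ju_h,v)$.

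This cannot be fixed by bookkeeping. $J$ spreads the correction of the gradient jumps over elements of size $h_T$, so its gradient error is in general of order $h_F^{1/2}\lVert[\nabla_\pw u_h]_F\rVert_{L^2(F)}$. Roughly, a normal-derivative jump can instead be removed in a layer of width $\delta$ at cost $(\delta+\varepsilon^2/\delta)\lVert[\nabla_\pw u_h\cdot\nu_F]_F\rVert_{L^2(F)}^2$. Taking $\delta=\varepsilon$ gives cost $\approx\varepsilon\lVert[\nabla_\pw u_h\cdot\nu_F]_F\rVert_{L^2(F)}^2$, which is exactly the weight in $\mu_\nc$ for $h_T>\varepsilon$ and the scale at which the efficiency proof of $\mu_\nc$ works.

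What is missing is the robust nonconformity estimate of Lemma~\ref{l:sp_nc_estimator}:
$\inf_{w\in H^2_0(\Omega)}\ennorm{u_h-w}_{\varepsilon,\pw}\lesssim\mu_\nc+\mu_{I_h}$.
For $h_T\lesssim\varepsilon$ your $J$-based bound is fine; this is the standard Case~1. For $\varepsilon<h_T$ you need a conforming companion built on the scale $\min\{\varepsilon,h_T\}$. The paper does this in three steps:
\begin{itemize}
\item it localizes the infimum to vertex patches with $\kappa_y$-weighted $L^2$, $H^1$ and $H^2$ contributions \cite[Lemma~3.4]{GallistlTian2024};
\item it refines each patch to mesh size $\hat h\approx\min\{\varepsilon,h_y\}$;
\item it uses an HCT smoother whose derivative degrees of freedom are averages of $\nabla_\pw u_h$, but whose nodal values are those of the continuous function $I_hu_h$.
\end{itemize}
The last choice is essential. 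The jumps $[u_h]_F$ of the Morley function itself (equivalently, the tangential part of $[\nabla_\pw u_h]_F$) cannot be removed cheaply in a thin layer. Because $I_hu_h$ is continuous, they are paid for by $\lVert\nabla_\pw(u_h-I_hu_h)\rVert$. This is why the nonconformity bound needs $\mu_{I_h}$ and not $\mu_\nc$ alone, which your proposal also misses.

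With such a $w$ in place of $Ju_h$ (i.e.\ testing with $u-w$), the rest of your argument goes through. The step you flag as the main obstacle, the robust trace bound for $\nabla_\pw(v-I_{\mathrm{qi}}v)$, is routine: a multiplicative trace inequality, as in Step~3 of the paper.
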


The proof of this theorem is preceded by the following lemma
that bounds 
the nonconformity error by $\mu_\nc+\mu_{I_h}$ similar as in 
\eqref{e:enriching_apost}, but for the $\varepsilon$-dependent norm.
The proof proceeds similar as in the proof of 
the reliability estimate of \cite[Lemma~3.5]{GallistlTian2024}. 
However, the assumptions of that Lemma are not 
satisfied for the Morley FEM, and, hence, the definition
of the smoothing operator $\hat{\Pi}_C$ has to be modified
in order to make the estimate independent of the ratio
$h/\varepsilon$. This results in the additional
term $\mu_{I_h}$ in the right-hand side.

\begin{lemma}[a~posteriori estimate for nonconformity error]
    \label{l:sp_nc_estimator}
    Any $u_h\in\mathcal{M}_0(\tri)$ satisfies 
    \begin{align*}
        \inf_{v\in H^2_0(\Omega)} \ennorm{u_h-v}_{\varepsilon,\pw}
        \lesssim \mu_\nc + \mu_{I_h}.
    \end{align*}
\end{lemma}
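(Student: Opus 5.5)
We construct an explicit conforming comparison function $v\in H^2_0(\Omega)$ from $u_h$ and bound $\ennorm{u_h-v}_{\varepsilon,\pw}$ elementwise. A single enriching operator $J$ from \eqref{e:enriching_stab_approx}--\eqref{e:enriching_apost} does not suffice. By \eqref{e:enriching_apost} it gives $\varepsilon\lVert D^2_\pw(u_h-Ju_h)\rVert_{L^2(T)}$ controlled by $\varepsilon h_F^{-1/2}\lVert[\nabla u_h]_F\rVert$, and $\lVert\nabla_\pw(u_h-Ju_h)\rVert_{L^2(T)}$ controlled by $h_F^{1/2}\lVert[\nabla u_h]_F\rVert$. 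Hence the weight behaves like $\varepsilon^2/h_T+h_T$, while $\varepsilon/\kappa_T=\max\{\varepsilon,\varepsilon^2/h_T\}$. The two agree only when $h_T\le\varepsilon$; for $h_T>\varepsilon$ the weight $h_T$ exceeds $\varepsilon$. So in the coarse regime we need a different smoothing, and this is where $\mu_{I_h}$ enters.

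\emph{Construction.} Write $u_h=I_hu_h+(u_h-I_hu_h)$, where $I_hu_h\in S^1_0(\tri)$ is $H^1_0$-conforming. First, regularise $I_hu_h$ at scale $\varepsilon$: apply a smoothing $\hat\Pi_C$ to $I_hu_h$ that is a local $H^2_0$ regularisation on elements with $h_T>\varepsilon$. Candidates are a convolution-type mollification at scale $\varepsilon$ near the faces (as in Verf\"urth's treatment of reaction--diffusion layers), or a conforming HCT/Argyris-type averaging on a locally refined submesh of size $\min\{h_T,\varepsilon\}$. On elements with $h_T\le\varepsilon$, use $J$ directly. Second, set $v:=\hat\Pi_C u_h$, defined elementwise by
\[
\hat\Pi_C u_h=Ju_h \ \text{on fine elements}, \qquad \hat\Pi_C u_h \approx \text{a smoothing of } I_hu_h \ \text{on coarse elements},
\]
blended through a partition of unity subordinate to the vertex patches $\omega_z$ to preserve $H^2_0$-conformity. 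Vertex values are consistent because $Ju_h(z)=u_h(z)=I_hu_h(z)$ by \eqref{e:prop_enriching_nodal}.

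\emph{Estimates.} On coarse elements ($\kappa_T=h_T/\varepsilon<1$ fails, i.e.\ $\kappa_T=1$) we split
\[
\lVert\nabla_\pw(u_h-v)\rVert_{L^2(T)}\le \mu_{I_h}(T)+\lVert\nabla(I_hu_h-v)\rVert_{L^2(T)}.
\]
The second term is the smoothing error of a continuous $P_1$ function at scale $\varepsilon$. It is localised to strips of width $\varepsilon$ around the faces, where $\nabla I_hu_h$ jumps. Scaling gives the bound $\varepsilon^{1/2}\lVert[\nabla I_hu_h]_F\rVert_{L^2(F)}$. Similarly, $\varepsilon\lVert D^2(\cdot)\rVert$ is bounded by $\varepsilon\cdot\varepsilon^{-1/2}\lVert[\nabla I_hu_h]\rVert$. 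The tangential jump of $\nabla I_hu_h$ vanishes. For the normal jump we use the triangle inequality
\[
[\nabla I_hu_h]_F=[\nabla u_h]_F-[\nabla(u_h-I_hu_h)]_F,
\]
together with a discrete trace inequality $h_F^{1/2}\lVert[\nabla(u_h-I_hu_h)]\rVert_F\lesssim\lVert\nabla_\pw(u_h-I_hu_h)\rVert_{\omega_F}$ and $\varepsilon\le h_F$. This gives $\varepsilon^{1/2}\lVert[\nabla I_hu_h]\rVert_F\lesssim\varepsilon^{1/2}\lVert[\nabla u_h]\rVert_F+\mu_{I_h}$, that is, $\lesssim\mu_\nc+\mu_{I_h}$ locally. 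The term $\varepsilon\lVert D^2_\pw u_h\rVert_{L^2(T)}$ on coarse elements also has to be controlled. For this we use that $D^2_\pw u_h$ is constant on $T$ and $u_h-I_hu_h$ is quadratic vanishing at the vertices, so an inverse estimate gives $\varepsilon\lVert D^2_\pw u_h\rVert_T\approx\varepsilon\lVert D^2(u_h-I_hu_h)\rVert_T\lesssim(\varepsilon/h_T)\mu_{I_h}(T)\le\mu_{I_h}(T)$. On fine elements, \eqref{e:enriching_apost} yields the weight $\varepsilon^2/h_T=\varepsilon/\kappa_T$ for the $D^2$ part. The gradient part gives $h_T\le\varepsilon$, which is again bounded by $\varepsilon/\kappa_T$. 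Transition elements are handled by shape regularity of the patches. Summing over $T$ with finite overlap gives the claim.

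The main obstacle is making $\hat\Pi_C$ rigorously $H^2_0$-conforming across the interface between fine and coarse regions and at $\partial\Omega$ while keeping all constants independent of $h/\varepsilon$. I would first try a construction on a uniformly $\varepsilon$-refined submesh of coarse elements, using HCT averaging, and verify that the averaging error scales with the jumps of $\nabla I_hu_h$ only in an $\varepsilon$-neighbourhood of the faces.
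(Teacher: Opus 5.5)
Your diagnosis of why $J$ alone fails for $h_T>\varepsilon$ is correct, and your core mechanism is the paper's. On coarse regions you replace $u_h$ by the continuous $I_hu_h$, smooth it by an HCT function on a submesh of size $\approx\varepsilon$, and pay for $u_h-I_hu_h$ with $\mu_{I_h}$, including $\varepsilon\lVert D^2_\pw u_h\rVert_T\lesssim(\varepsilon/h_T)\mu_{I_h}(T)$.

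\textbf{The gap.} The proposal never produces the comparison function $v\in H^2_0(\Omega)$. You call conformity across the fine/coarse interface and at $\partial\Omega$ ``the main obstacle'' and leave it open. Consistency of vertex values $Ju_h(z)=I_hu_h(z)$ does not give $H^2$-conformity. Once you blend with a $C^1$ partition of unity, $v=\sum_z\phi_z v_z$, the quantity $D^2 v-D^2_\pw u_h$ also contains (using $\sum_z\phi_z=1$) the terms $\nabla\phi_z\otimes\nabla_\pw(v_z-u_h)+\nabla_\pw(v_z-u_h)\otimes\nabla\phi_z+D^2\phi_z\,(v_z-u_h)$. Similarly, $\nabla(v-u_h)$ contains $\nabla\phi_z\,(v_z-u_h)$. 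You must therefore control the weighted quantities $h_z^{-1}\kappa_z^{-1}\lVert u_h-v_z\rVert_{L^2(\omega_z)}$ and $\kappa_z^{-1}\lVert\nabla_\pw(u_h-v_z)\rVert_{L^2(\omega_z)}$, where $\kappa_z=\min\{1,h_z/\varepsilon\}$ and $h_z$ is the local mesh size on $\omega_z$. Your ``Estimates'' paragraph bounds only gradient and Hessian errors elementwise and never treats these terms. The smoothing-error bounds there are asserted from heuristic scaling rather than derived.

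\textbf{How the paper closes it.} The paper's first step is the localisation of Lemma~3.4 in \cite{GallistlTian2024}:
$\inf_{v}\ennorm{u_h-v}^2_{\varepsilon,\pw}\lesssim\sum_{y\in\mathcal N}\mathcal B(y,u_h)$.
Here $\mathcal B(y,u_h)$ minimises $h_y^{-2}\kappa_y^{-2}\lVert u_h-v_y\rVert^2+\kappa_y^{-2}\lVert\nabla_\pw(u_h-v_y)\rVert^2+\varepsilon^2\lVert D^2_\pw(u_h-v_y)\rVert^2$ over $v_y\in V(\omega_y)$. These are restrictions of $H^2_0(\Omega)$ functions: clamped on $\partial\Omega\cap\partial\omega_y$ and free on the rest of $\partial\omega_y$. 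This removes the interface problem. Each patch has a single scale $h_y$ and hence a single regime, local comparison functions need not match, and the weighted $L^2$ and $H^1$ terms you omitted become explicit.

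In the coarse regime the paper proceeds as follows. It uses inverse estimates on the scale $\hat h\approx\varepsilon$, together with $\lVert u_h-I_hu_h\rVert_T\lesssim h_T\lVert\nabla(u_h-I_hu_h)\rVert_T$. These reduce $\mathcal B(y,u_h)$ to $\varepsilon^{-2}\lVert I_hu_h-\hat\Pi_Cu_h\rVert^2_{L^2(\omega_y)}+\lVert\nabla_\pw(u_h-I_hu_h)\rVert^2_{L^2(\omega_y)}$. The HCT degree-of-freedom expansion, in which the nodal dofs vanish, then finishes the bound.

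Inside this framework your variant also works. You would take all HCT dofs from $I_hu_h$, so only jumps of $\nabla I_hu_h$ appear. You would then convert $\varepsilon^{1/2}\lVert[\nabla I_hu_h]_F\rVert_{L^2(F)}$ into $\varepsilon^{1/2}\lVert[\nabla u_h]_F\rVert_{L^2(F)}+\mu_{I_h}$ via the discrete trace inequality and $\varepsilon\le h_F$. This is an alternative to the paper's choice of averaging $\nabla_\pw u_h$ for the derivative dofs. Without the localisation, or an equivalent explicit treatment of the partition-of-unity terms, your argument is a plan rather than a proof.
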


\begin{proof}
    The first step consists in the localisation of 
    \cite[Lemma~3.4]{GallistlTian2024} that proves 
    \begin{align*}
        \inf_{v\in H^2_0(\Omega)} \ennorm{u_h-v}_{\varepsilon,\pw}^2
        \lesssim \sum_{y\in \mathcal{N}} 
         \mathcal B(y,u_h)
    \end{align*}
   where
   \begin{align*}
     \mathcal B(y,u_h)=
          \min_{v_y\in V(\omega_y)}
          \Bigg( \frac{1}{h_y^2 \kappa_y^2} 
                    \lVert u_h-v_y\rVert_{L^2(\omega_y)}^2
              +\frac{1}{\kappa_y^2} 
                  \lVert\nabla_\pw(u_h-v_y)\rVert_{L^2(\omega_y)}^2\\
             +\varepsilon^2 
                  \lVert D^2_\pw (u_h-v_y)\rVert_{L^2(\omega_y)}^2
            \Bigg),
    \end{align*}
    where $\kappa_y=\min\{1,h_y/\varepsilon\}$ with 
    $h_y=\max\{h_T\,\vert\,T\in\tri,\, T\subseteq \omega_y\}$ and 
    $V(\omega_y):=\{v\vert_{\omega_y}\;\vert\; v\in H^2_0(\Omega)\}$.
    We now fix an arbitrary vertex $y$.
    Let $\hat{\tri}$ be a locally uniformly refined triangulation of $\omega_y$ with 
    (maximal) mesh-size $\hat{h}\approx \min\{\varepsilon,h_y\}$
    (with no refinement and thus $\hat h = h_y$ in the case $h_y\leq\varepsilon$).
    Let $\hat{V}_C\subseteq V(\omega_y)$ denote the conforming 
    Hsieh--Clough--Tocher (HCT)
    finite element space \cite{Ciarlet1978} with respect to the triangulation 
    $\hat{\tri}$. On $\partial\Omega\cap\partial\omega_y$,
    the functions of $\hat{V}_C$ have clamped boundary conditions,
    whereas on the remaining part of $\partial\omega_y$, no boundary
    condition is prescribed.
    Let $\mathcal{A}$ denote some averaging operator that
    averages over all adjacent triangles that are contained
    in the patch $\omega_y$.
    Define a smoothing operator
    $\hat{\Pi}_C:\mathcal{M}_0(\tri)\to \hat{V}_C$ by 
    \begin{align*}
        (\hat{\Pi}_C v_h)(z)
             =  (I_h v_h)(z)
    \quad\text{and}\quad
        (\nabla\hat{\Pi}_C v_h)(z)&=\mathcal{A}(\nabla_\pw v_h)(z)
     \end{align*}
   for all interior (with respect to $\Omega$)
   vertices $z$ of $\hat\tri$,
     as well as 
     \begin{align*}
     \fint_{F}\partial_{\nu_F} (\hat{\Pi}_C v_h)\,ds
              = 
      \fint_{F} \mathcal A(\partial_{\pw,\nu_F} v_h)\,ds
     \end{align*}
    for all interior (with respect to $\Omega$)
    faces $F$ of $\hat\tri$
    with a fixed normal $\nu_F$.
    Note that the HCT degrees of freedom are defind as averages 
    because the quantities $\fint_{F} \partial_{\pw,\nu_F} v_h\,ds$
    are possibly multi-valued on (fine) edges $F$ of $\hat T$.

    \textbf{Case 1/2.}
    We begin with the case that $\hat\tri$ coincides with $\mathcal T$
    on $\omega_y$, which means that $h_y\lesssim \hat h$ and thus
    $h_y\lesssim \varepsilon$ so that $\kappa_y\approx h_y/\varepsilon$.
    The conformity $\hat V_C\subseteq V(\omega_y)$
    of the HCT finite element space
    and inverse estimates on the scale $\hat h$ (depending on $y$) show
    \begin{align*}
        \mathcal B(y,u_h)\lesssim
           \frac{\varepsilon^2}{h_y^4} 
                    \lVert u_h-\hat{\Pi}_C u_h\rVert_{L^2(\omega_y)}^2
          .
    \end{align*}
    Further, the nodal values satisfy $\hat{\Pi}_C u_h(z)=I_h u_h(z)=u_h(z)$.
    In this case, standard estimates
    \cite[Proposition~2.5]{Gallistl2015}
    bound $\mathcal B(y,u_h)$ by the sum of all $\mu_\nc(T)^2$
    for $T$ belonging to $\omega_y$.

    \textbf{Case 2/2.}
    In the remaining case that $\hat h<h_y$ and thus
    $\varepsilon\lesssim h_y$ and $\kappa_y\approx 1$,
    we split the difference
    $$
    u_h-\hat{\Pi}_C u_h
    =
    (u_h-I_h u_h)
    + (I_h u_h - \hat{\Pi}_C u_h).
    $$
    The function $u_h-I_hu_h$ is piecewise polynomial with 
    respect to  $\mathcal T$.
    Therefore, the triangle inequality, 
    inverse inequalities, the discrete inequality
    \begin{align*}
        \lVert u_h-I_h u_h\rVert_{L^2(T)}
        \lesssim h_T \lVert \nabla(u_h-I_h u_h)\rVert_{L^2(T)},
    \end{align*}
    for the norms inside $\mathcal B(y,u_h)$
    and $\varepsilon\approx \hat{h}$
    reveal
    $$
    \mathcal B(y,u_h)
    \lesssim
            \frac{1}{\varepsilon^2} 
                        \lVert I_h u_h-\hat{\Pi}_C u_h\rVert_{L^2(\omega_y)}^2
    +
    \|\nabla_\pw (u_h-I_h u_h) \|_{L^2(\omega_y)}^2.
    $$
    
    On every triangle $\hat T$ of $\hat\tri$,
    the degrees of freedom of the HCT element are given by
    $L^{(0)}_{\hat z} v = v(\hat z)$
    and  
    $L^{(1,j)}_{\hat z} v = \partial_j v(\hat z)$
    for the vertices $\hat z$ of $\hat T$ and $j\in\{1,2\}$,
    and 
    $L^{(1,\nu)}_{\hat F} v = \fint_{\hat F} \partial_{\nu_{\hat F}}v\,ds$
    for the faces $\hat F$ of $\hat T$.
    We do not explicitly include the dependence on $\hat T$
    in the notation because it will be clear from the context.
    The local basis functions dual to these degrees of freedoms 
    are denoted by $\hat\varphi^{(0)}_{\hat z}$,
    $\hat \varphi^{(1,j)}_{\hat z}$,
    $\hat \varphi^{(1,\nu)}_{\hat F}$.

    For any vertex $\hat z$ of $\hat T$,
    the definition of $\hat{\Pi}_C$ implies 
    $(I_h u_h-\hat{\Pi}_C u_h)(\hat z)=L^{(0)}_{\hat z}
      (I_h u_h-\hat{\Pi}_C u_h)
     =0$.
    Therefore, the difference $(I_h u_h-\hat{\Pi}_C u_h)$
    can be expanded as follows
    \begin{align}\label{e:IhPih_split}
     \begin{aligned}
      &(I_h u_h-\hat{\Pi}_C u_h)|_{\hat T}
       =
        \sum_{(d,p )}
         L^{(1,d)}_p  (I_h u_h-\hat{\Pi}_C u_h) \varphi^{(1,d)}_p
    \\
       &=
        \sum_{(d,p )}
         L^{(1,d)}_p  (I_h u_h- u_h) \varphi^{(1,d)}_p
   + \sum_{(d,p )}
         L^{(1,d)}_p  ( u_h-\hat{\Pi}_C u_h) \varphi^{(1,d)}_p,
    \end{aligned}
   \end{align}
    where the sums run over the nine possible pairs $(d,p)$
    of type $(j,\hat z)$ and $(\nu,\hat F)$ for 
    $j=1,2$, the vertices $\hat z$, and the faces $\hat F$
    of $\hat T$.

    The basis functions
    associated to the derivative degrees of freedom scale
    in the $L^2$ norm as follows
     \begin{align*}
        \lVert\varphi^{(1,d)}_p\rVert_{\hat{T}}\approx \hat{h}^2.
    \end{align*}
    A direct scaling argument therefore shows for the squared norm of 
    the first sum of the right-hand side of 
    \eqref{e:IhPih_split} that 
    $$
     \left\lVert
         \sum_{(d,p )}
         L^{(1,d)}_p  (I_h u_h- u_h) \varphi^{(1,d)}_p
     \right\rVert_{L^2(\hat{T})}^2
     \lesssim 
      \hat h^2 \lVert \nabla(u_h-I_h u_h)\rVert_{L^2(\hat{T})}^2.
    $$
    Moreover, for the squared norm of the second term
    on the right-hand side of \eqref{e:IhPih_split}
    we compute with the definition
    of $\hat\Pi_Cu_h$ and the scaling of the basis functions 
    \begin{align*}
     &   \left\lVert\sum_{(d,p)}
            L^{(1,d)}_p (u_h -\hat{\Pi}_Cu_h)
               \varphi_p^{(1,d)}\right\rVert_{L^2(\hat{T})}^2\\
    & \lesssim \hat{h}^4
       \Bigg(
          \sum_{j=1}^2\sum_{\hat z\in\mathcal N(\hat T )} 
            \lvert (\partial_j u_h
              - \mathcal{A}(\partial_j u_h))(\hat{z})\rvert^2
       + \sum_{\hat F\in\mathcal F(\hat T)}
          \left\lvert\fint_{\hat F} (\partial_{\nu_{\hat F}}u_h
                - \mathcal A(\partial_{\pw,\nu_{\hat F}}u_h))\,ds\right\rvert^2
       \Bigg)
    \end{align*}
 where $\mathcal N(\hat T)$ and $\mathcal{F}(\hat T)$ are the sets
 of vertices and faces of $\hat T$, respectively.
 With standard arguments \cite{BrennerScott2008}, the differences
 to the average can be bounded by the jumps across all faces of
 $\hat\tri$ having one of their endpoints in $\hat T$,
 namely
    \begin{align*}
         \left\lVert\sum_{(d,p)}
            L^{(1,d)}_p (u_h -\hat{\Pi}_Cu_h)
               \varphi_p^{(1,d)}\right\rVert_{L^2(\hat{T})}^2
     \lesssim 
       \sum_{\substack{\hat F\in\hat{\mathcal{F}},
                        \hat F\cap \hat T\neq\emptyset, \\
                        \hat{F}\not\subseteq
                         (\overline{\partial\omega_y\setminus\partial\Omega})}}
             \hat{h}^3 \lVert [\nabla u_h]_{\hat{F}}\rVert_{L^2(\hat{F})}^2
    \end{align*}
    where $\hat{\mathcal{F}}$ denotes the set of faces in $\hat{\tri}$.
    Note that faces on $\partial\omega_y$ that are not on $\partial\Omega$
    are excluded in the sum because the averaging takes only values from
    inside of $\omega_y$ into account.
Combining the foregoing estimates and summing over all elements
of $\hat\tri$, we obtain
with the bounded overlap of such sets from the shape-regularity
\begin{align*}
 &           \frac{1}{\varepsilon^2} 
                        \lVert I_h u_h-\hat{\Pi}_C u_h\rVert_{L^2(\omega_y)}^2
\\
&  \lesssim
            \frac{1}{\varepsilon^2} 
       \left(
      \hat h^2 \lVert \nabla(u_h-I_h u_h)\rVert_{L^2(\omega_y)}^2
 +       \sum_{\hat F\in\hat{\mathcal{F}},
                \hat{F}\not\subseteq
                         (\overline{\partial\omega_y\setminus\partial\Omega})}
             \hat{h}^3 \lVert [\nabla u_h]_{\hat{F}}\rVert_{L^2(\hat{F})}^2
    \right).
\end{align*}
Note that $[\nabla u_h]_{\hat{F}}=0$, if 
$\hat{F}\not\subseteq F$ for all coarse faces $F\in\mathcal{F}$, 
and therefore
\begin{align*}
     \sum_{\hat{F}\in\hat{\mathcal{F}},
                \hat{F}\not\subseteq
                         (\overline{\partial\omega_y\setminus\partial\Omega})}
          \lVert [\nabla u_h]_{\hat{F}}\rVert_{L^2(\hat{F})}^2
     =\sum_{F\in \mathcal{F},y\in F}
          \lVert [\nabla u_h]_{F}\rVert_{L^2(F)}^2.
\end{align*}
Since in the present case $\hat h\approx \varepsilon$, the result follows. 
\end{proof}

\begin{remark}
In the definition of $\hat \Pi_C$ in the foregoing proof,
the nodal values of $I_h u_h$ are taken instead of 
simple averages of $u_h$. The latter choice would not lead
to an efficient bound with the proof techique employed 
here. This has been commented on in
\cite[Remark 3.8]{GallistlTian2024}, where the averaging process
was analyzed and the authors concluded efficiency for 
the case of continuous trial functions.
In the present case of the Morley element with discontinuous
trial functions, the nodal values are taken from the continuous
object $I_hu_h$, which results in an efficient bound.
\end{remark}

\begin{remark}
    In \cite{WangMeng2007}, the authors introduce a generalisation
    of the modified Morley FEM for the singularly perturbed biharmonic
    problem to 3d. The nodal interpolation operator in the
    $L^2$ term has to be replaced by the interpolation in a space
    of functions whose integral mean over edges are continuous, but
    that could be discontinuous in general.
    The analysis of Lemma~\ref{l:sp_nc_estimator} relies
    on the fact that the degrees of freedom associated with
    $L^2$ terms vanish in \eqref{e:IhPih_split}.
    This would not longer be the case in 3d, and therefore
    the analysis in 3d requires a different approach.
\end{remark}

\begin{proof}[Proof of Theorem~\ref{t:sp_reliability}]
    \textbf{Step 1 (Error split).}
    The triangle inequality leads to
    \begin{align*}
        \lVert \nabla(u-I_h u_h)\rVert_{L^2(\Omega)}
        &\leq \lVert \nabla_\pw(u- u_h)\rVert_{L^2(\Omega)}
           + \lVert \nabla_\pw(u_h-I_h u_h)\rVert_{L^2(\Omega)}\\
        &\leq \ennorm{u-u_h}_{\varepsilon,\pw} + \mu_{I_h}.
    \end{align*}
    The error in the energy norm can be split as 
    \begin{align*}
        &\ennorm{u-u_h}_{\varepsilon,\pw}^2\\
        &\qquad
        =\inf_{v\in H^2_0(\Omega)} \ennorm{u_h-v}_{\varepsilon,\pw}^2
           + \sup_{v\in H^2_0(\Omega),\ennorm{v}_\varepsilon=1}
            \left( 
             (f,v)_{L^2(\Omega)} - a_{\varepsilon,\pw}(u_h,v) \right)^2.
    \end{align*}
    The first term is bounded with the help of 
    Lemma~\ref{l:sp_nc_estimator} by $\mu_\nc^2+\mu_{I_h}^2$.
    For the analysis of the second term,
    let $v\in H^2_0(\Omega)$ with $\ennorm{v}_\varepsilon=1$ and 
    set $e_v:=v-I_\mathrm{qi}v$ for the Morley quasi interpolation operator
    $I_\mathrm{qi}:H^2_0(\Omega)\to\mathcal{M}_0(\tri)$
    from \eqref{e:quasi_interpolation}.
    The expression inside the squared supremum then reads
    \begin{equation*}
    \begin{aligned}
        &(f,v)_{L^2(\Omega)} - a_{\varepsilon,\pw}(u_h,v)
        = \sum_{j=1}^4 T_j
    \end{aligned}
    \end{equation*}
    with 
    \begin{align*}
        T_1&:=(f,e_v)_{L^2(\Omega)} + (f, I_\mathrm{qi}v-I_hI_\mathrm{qi}v)_{L^2(\Omega)},
        &
        T_2&:=-\varepsilon^2 (D^2_\pw u_h,D^2_\pw e_v)_{L^2(\Omega)},\\
        T_3&:=- (\nabla I_h u_h,\nabla (v-I_h I_\mathrm{qi}v))_{L^2(\Omega)},
        &
        T_4&:= (\nabla (I_h u_h-u_h),\nabla v)_{L^2(\Omega)}.
    \end{align*}
    It remains to bound these terms by the error estimator.

    \textbf{Step~2 (Bound of $T_1$).}
    The estimates 
    \eqref{e:quasi_interpolation}
    of the nonconforming Morley quasi interpolation operator,
    the Cauchy inequality, and $\ennorm{v}_{\varepsilon}=1$ 
    prove
    \begin{align*}
        (f,v-I_\mathrm{qi}v)_{L^2(\Omega)} 
        &\lesssim 
          \min\{\lVert h_\tri f \rVert_{L^2(\Omega)} \,
            \lVert \nabla v \rVert_{L^2(\Omega)},
             \lVert h_\tri^2 f \rVert_{L^2(\Omega)} \,
            \lVert D^2 v \rVert_{L^2(\Omega)}\}\\
        &\leq \lVert\min\{h_\tri,h_\tri^2/\varepsilon\}
           f \rVert_{L^2(\Omega)}
         \leq \eta.
    \end{align*}
    Similarly, the $H^2$ approximation property of $I_h$ and
    the inverse estimate combined with the $H^1$ and $H^2$ stability of 
    $I_{qi}$ lead to
    $
        (f, I_\mathrm{qi}v-I_hI_\mathrm{qi}v)_{L^2(\Omega)}
        \lesssim \eta.
    $
    
    \textbf{Step~3 (Bound of $T_2$).}
    Since the jump and the average of 
    $D^2_\pw u_h\,\nu_F$ over $F$  is constant for any $F\in\mathcal{F}$
    and $[\nabla_\pw e_v]_F$ is affine on $F$ and vanishes in the midpoint of $F$,
    a piecewise integration by parts for $T_2$ leads to
    \begin{align}\label{e:proof_reliability_2}
        |T_2|
        =| \varepsilon^2 (D^2_\pw u_h,D^2_\pw e_v)_{L^2(\Omega)}|
        = \left |\varepsilon^2\sum_{F\in\mathcal{F}(\Omega)}
            ([D^2_\pw u_h\,\nu]_F,\nabla_\pw e_v)_{L^2(F)}
          \right|
          .
    \end{align}
    The multiplicative trace inequality proves for some
    $T_F\in\tri$ with $F\subseteq T_F$ that
    \begin{align}\label{e:multitrace}
        \|\nabla_\pw e_v\|_{L^2(F)}
        &\lesssim h_F^{-1/2} \|\nabla_\pw e_v\|_{L^2(T_F)}
            + \|\nabla_\pw e_v\|_{L^2(T_F)}^{1/2}
                \|D^2_\pw e_v\|_{L^2(T_F)}^{1/2}.
    \end{align}
    The approximation and stability properties from
    \eqref{e:quasi_interpolation} then prove
    \begin{align*}
    \|\nabla_\pw e_v\|_{L^2(F)}
        \lesssim \sqrt{\|\nabla  v\|_{L^2(\Omega_{T_F})} 
                    \,\|D^2  v\|_{L^2(\Omega_{T_F})}}
        \lesssim \varepsilon^{-1/2} \ennorm{v}_{\varepsilon,\Omega_{T_F}}.
    \end{align*}
    On the other hand, \eqref{e:multitrace}
    together with the approximation and stability properties
    from \eqref{e:quasi_interpolation} and Young's inequality 
    leads to
    \begin{align*}
        \|\nabla_\pw e_v\|_{L^2(F)}
        &\lesssim h_F^{-1/2} \lVert \nabla_\pw e_v\rVert_{L^2(T_F)}
           + h_F^{1/2} \lVert D^2_\pw e_v\rVert_{L^2(T_F)}\\
        &\lesssim h_F^{1/2} \lVert D^2 v\rVert_{L^2(\Omega_{T_F})}
        \lesssim (h_F^{1/2}/\varepsilon)\, \ennorm{v}_{\varepsilon,\Omega_{T_F}}.
    \end{align*}
    The combination of the foregoing displayed formulae leads to
    \begin{align}\label{e:proof_reliability_3}
        \|\nabla_\pw e_v\|_{L^2(F)}
        \lesssim \min\{\varepsilon^{-1/2},h_F^{1/2}/\varepsilon\}
        \ennorm{v}_{\varepsilon,\Omega_{T_F}}.
    \end{align}
    This and \eqref{e:proof_reliability_2} bound $T_2$ as
    \begin{align*}
        |T_2|
        \lesssim \sqrt{\sum_{F\in\mathcal{F}(\Omega)}
            \min\{\varepsilon^{3/2},h_F^{1/2}\varepsilon\}^2
               \lVert [D^2_\pw u_h\,\nu]_F\rVert_{L^2(F)}^2}.
    \end{align*}
    
    \textbf{Step~4 (Bound of $T_3$).}
    Since $\nabla I_h u_h$ is piecewise constant, a piecewise integration
    by parts and $v-I_hI_\mathrm{qi}v\in H^1_0(\Omega)$ 
    imply for $T_3$ that
    \begin{align*}
        -T_3=
        (\nabla I_h u_h,\nabla (v-I_h I_\mathrm{qi} v))_{L^2(\Omega)}
        = \sum_{F\in\mathcal{F}(\Omega)}
          ([\nabla I_h u_h\cdot\nu_F]_F, v-I_h I_\mathrm{qi} v)_{L^2(F)}.
    \end{align*}
    Note that the $H^2$ stability of the nodal interpolation 
    operator $I_h$ from \eqref{e:nodal_interpolation_approx},
    an inverse inequality, and the $H^1$ stability of $I_\mathrm{qi}$ 
    imply for any $T\in\tri$ that 
    \begin{align*}
        h_T^{-1}\lVert I_\mathrm{qi} v-I_h I_\mathrm{qi}v\rVert_{L^2(T)} 
        &+ \lVert \nabla(I_\mathrm{qi} v-I_h I_\mathrm{qi}v)\rVert_{L^2(T)}
        \lesssim h_T \lVert D^2 I_\mathrm{qi} v\rVert_{L^2(T)} \\
        &\qquad\qquad\qquad \qquad\qquad
        \lesssim \lVert \nabla I_\mathrm{qi} v\rVert_{L^2(T)} 
        \lesssim \lVert \nabla  v\rVert_{L^2(\Omega_T)},
    \end{align*}
    and, therefore, the operator $I_h I_\mathrm{qi}$ enjoys the 
    same approximation and stability properties 
    \eqref{e:quasi_interpolation} as $I_\mathrm{qi}$.
    Hence, the trace inequality and the 
    $H^1$ approximation and stability properties prove 
    \begin{align*}
        \lVert v-I_h I_\mathrm{qi}v\rVert_{L^2(F)}
        \lesssim h_T^{1/2}\lVert \nabla v\rVert_{L^2(\Omega_F)}
        \leq h_T^{1/2} \ennorm{v}_{\varepsilon, \Omega_F},
    \end{align*}
   while the $H^2$ approximation and stability properties show 
    \begin{align*}
        \lVert v-I_h I_\mathrm{qi}v\rVert_{L^2(F)}
        \lesssim h_T^{3/2}\lVert D^2 v\rVert_{L^2(\Omega_F)}
        \leq h_T^{3/2}\varepsilon^{-1} \ennorm{v}_{\varepsilon, \Omega_F}.
    \end{align*}
    The combination of these two estimates leads to 
    \begin{align*}
        \lVert v-I_h I_\mathrm{qi}v\rVert_{L^2(F)}
        \lesssim h_T^{1/2}\kappa_T \ennorm{v}_{\varepsilon, \Omega_F}.
    \end{align*}
    This and the finite overlap of the patches
    eventually show
    \begin{align*}
        |T_3|
        \lesssim  \sqrt{\sum_{F\in\mathcal{F}(\Omega)}
          h_T\kappa_T^2
          \,\lVert [\nabla I_h u_h\cdot\nu_F]_F\rVert_{L^2(F)}^2}
          .
    \end{align*}
    
    \textbf{Step~5 (Bound of $T_4$).}
    Finally, the last term $T_4$
    is estimated with a Cauchy inequality by
    \begin{align*}
       T_4= (\nabla (I_h u_h-u_h),\nabla  v)_{L^2(\Omega)}
        \leq \lVert \nabla (u_h-I_h u_h)\rVert_{L^2(\Omega)} .
    \end{align*}
    This concludes the proof.
\end{proof}

Define the $\varepsilon$-dependent oscillations 
\begin{align*}
    \osc_\varepsilon(f,\tri(\omega_T))
        := \lVert \min\{h_\tri,h_\tri^2/\varepsilon\} (f-\Pi_0 f)\rVert_{L^2(\omega_T)},
\end{align*}
where $\Pi_0$ denotes the $L^2$ projection on the piecewise
constants with respect to $\tri$.
The following theorem proves the efficiency of the error estimator.

\begin{theorem}[efficiency]\label{t:sp_efficiency}
    The error estimator satisfies, for every $T\in\tri$,
    \begin{align*}
        \eta^2(T)
          \lesssim \ennorm{u-u_h}_{\varepsilon,\pw,\omega_T}^2
            +\lVert \nabla (u-I_h u_h)\rVert_{L^2(\omega_T)}^2
            +\osc_\varepsilon(f,\tri(\omega_T)).
    \end{align*}
\end{theorem}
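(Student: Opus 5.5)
The estimate is proved contribution by contribution, following the bubble-function technique of \cite{Verfuerth1998} for singularly perturbed reaction--diffusion problems, with the cut-off $\kappa_T=\min\{1,h_T/\varepsilon\}$ handled by squeezing bubbles to a layer of width $\min\{h_T,\varepsilon\}$. The two nonconformity terms are direct. For $\mu_{I_h}(T)$ the triangle inequality gives
\begin{align*}
\lVert\nabla(u_h-I_hu_h)\rVert_{L^2(T)}\le \lVert\nabla_\pw(u-u_h)\rVert_{L^2(T)}+\lVert\nabla(u-I_hu_h)\rVert_{L^2(T)}.
\end{align*}
For $\mu_\nc(T)$ we use that $[\nabla u_h]_F$ is affine and vanishes at $\operatorname{mid}(F)$, and that $[\nabla u]_F=0$. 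Hence $\lVert[\nabla u_h]_F\rVert_{L^2(F)}^2\lesssim h_F\min\{h_F^2\lVert D^2_\pw(u-u_h)\rVert_{\omega_F}^2,\ \lVert\nabla_\pw(u-u_h)\rVert_{\omega_F}^2\}$. The first bound is the second estimate of \eqref{e:enriching_apost}; the second follows from a trace inequality plus an inverse estimate applied to the affine jump, after comparing with a piecewise affine approximation of $u$. Multiplying by $\varepsilon/\kappa_T$ and checking the two regimes $h_T\le\varepsilon$ and $h_T>\varepsilon$ bounds $\mu_\nc^2(T)$ by $\ennorm{u-u_h}^2_{\varepsilon,\pw,\omega_T}$.

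For the volume term $\eta_f(T)$, let $f_T=\Pi_0f|_T$ and use a squeezed bubble $b$ supported in $T$ with $\lVert b\rVert_{L^2(T)}^2\approx|T|$ and $\lVert\nabla b\rVert\lesssim\kappa_T^{-1}h_T^{-1}\lVert b\rVert$, $\lVert D^2b\rVert\lesssim\kappa_T^{-2}h_T^{-2}\lVert b\rVert$. For $h_T\le\varepsilon$ this is the standard cubic bubble squared. For $h_T>\varepsilon$ it is that bubble composed with a map contracting towards the interior by $\varepsilon/h_T$. Test \eqref{e:sing_pert_weak_form} with $v=h_T^2\kappa_T^2f_Tb\in H^2_0(\Omega)$. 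Since $u_h|_T\in P_2$ and $I_hu_h|_T\in P_1$, elementwise integration by parts of $a_{\varepsilon}$ against a function vanishing with its gradient on $\partial T$ yields zero discrete contributions. This gives
\begin{align*}
\lVert h_T\kappa_Tf_T\rVert_T^2\lesssim a_{\varepsilon,\pw}(u-u_h,v)+(\nabla(u_h-I_hu_h),\nabla v)_T+(f-f_T,v)_T.
\end{align*}
The scaling of $b$ yields $\ennorm{v}_{\varepsilon,T}\lesssim\lVert h_T\kappa_Tf_T\rVert_T$, because $\varepsilon h_T^{-2}\kappa_T^{-2}h_T^2\kappa_T^2$ and $h_T^{-1}\kappa_T^{-1}h_T^2\kappa_T^2$ are both at most $h_T\kappa_T$ after normalisation. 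So $\eta_f(T)$ is bounded by the error, $\mu_{I_h}(T)$ and the oscillation, since $h_T\kappa_T=\min\{h_T,h_T^2/\varepsilon\}$.

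The face terms $\eta_2$ and $\eta_1$ are the main obstacle. For $\eta_2$ use a squeezed face bubble $b_F$ on $\omega_F$ with width $\min\{h_F,\varepsilon\}$ normal to $F$, squared so that $b_F\in H^2_0(\omega_F)$. Take $v=[\nabla I_hu_h\cdot\nu_F]_F\,b_F$ with the constant jump extended constantly. Integration by parts gives $([\nabla I_hu_h\cdot\nu]_F,b_F)_F=(\nabla I_hu_h,\nabla v)_{\omega_F}$. Writing $\nabla I_hu_h=\nabla u-\nabla(u-I_hu_h)$ and using \eqref{e:sing_pert_weak_form}, we get $(\nabla u,\nabla v)=(f,v)-\varepsilon^2(D^2u,D^2v)$. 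We then subtract $\varepsilon^2(D^2_\pw u_h,D^2v)$, which reduces by integration by parts to $\varepsilon^2([D^2_\pw u_h\nu]_F\cdot\nu,\ldots)$-type face terms. The difficulty is that these $\varepsilon^2$-terms must not spoil the scaling. I would first treat $\eta_1$ with a bubble $v_1$ whose trace vanishes and whose normal derivative on $F$ equals $[D^2_\pw u_h\nu]_F\cdot\nu_F$ times a bubble, together with a tangential analogue using $\tau_F$. This gives $\varepsilon^2\lVert[D^2_\pw u_h\nu]_F\rVert_F^2\lesssim a_{\varepsilon,\pw}(u-u_h,v_1)+(\nabla(u_h-I_hu_h),\nabla v_1)+(f,v_1)$ up to the already-bounded $\eta_f$. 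Inserting the squeezed-bubble scalings produces exactly the weight $\varepsilon^3\kappa_T$. Having bounded $\eta_1$, the $\varepsilon^2$ face term in the $\eta_2$ argument is controlled by $\eta_1$ and $\eta_f$, with weight $h_T\kappa_T^2$ from $\lVert b_F\rVert_F^2\approx h_F$ after squeezing and normalisation. Summing the contributions over faces of $T$ within $\omega_T$ completes the plan. The delicate points are the precise exponents in the squeezed bubble estimates when $h_T>\varepsilon$, and ensuring that only $\omega_T$ appears.
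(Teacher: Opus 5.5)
There are genuine gaps in your proposal. Your handling of $\mu_{I_h}$ agrees with the paper. So does the normal part of $\eta_1$ (a trace-free bubble whose normal derivative is a bubble, squeezed with $\delta=\min\{1,\varepsilon/h_F\}$), and so does your plan to control the $\varepsilon^2$ face term in the $\eta_2$ argument by $\eta_1$.

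\textbf{First gap: $\mu_\nc$ for $h_T>\varepsilon$.} By scaling, your bound must read $h_F^{-1}\min\{h_F^2\lVert D^2_\pw(u-u_h)\rVert^2_{L^2(\omega_F)},\lVert\nabla_\pw(u-u_h)\rVert^2_{L^2(\omega_F)}\}$; the powers of $h_F$ as you wrote them are off by $h_F^2$. More seriously, the second entry of the minimum is false. Take $u_h\in\mathcal{M}_0(\tri)$ continuous across $F$ but with a kink $[\partial_{\nu_F}u_h]_F\neq0$; this is possible. Let $u\in H^2_0(\Omega)$ agree with $u_h$ on $\omega_F$ outside a layer of width $\delta$ around $F$. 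Then $\lVert\nabla_\pw(u-u_h)\rVert^2_{L^2(\omega_F)}\lesssim\delta\,\lVert[\nabla u_h]_F\rVert^2_{L^2(F)}$ with $\delta\to0$. Comparing with piecewise affine approximations of $u$ cannot repair this, because $L^2$ norms of gradients do not see thin layers. In this regime the weight $\varepsilon$ in $\mu_\nc^2(T)$ is the geometric mean of the weights $1$ and $\varepsilon^2$ of the energy norm. What produces it is the multiplicative trace inequality for $w=\nabla_\pw(u-u_h)$, namely $\varepsilon\lVert[\nabla u_h]_F\rVert^2_{L^2(F)}\lesssim(\varepsilon/h_F)\lVert w\rVert^2_{L^2(\omega_F)}+\varepsilon\lVert w\rVert_{L^2(\omega_F)}\lVert D^2_\pw(u-u_h)\rVert_{L^2(\omega_F)}$, followed by Young's inequality. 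This is what the paper does.

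\textbf{Second gap: the tangential part of $\eta_1$.} This part cannot be reached by any bubble test. For $v\in H^2_0(\omega_F)$ one has $\int_{\omega_F}D^2_\pw u_h:D^2v\,dx=\int_F[D^2_\pw u_h\nu_F]_F\cdot\nabla v\,ds$. The constant tangential component contributes $([D^2_\pw u_h\nu_F]_F\cdot\tau_F)\int_F\partial_{\tau_F}v\,ds=0$, since $v$ vanishes at the endpoints of $F$. This component is a nonconformity quantity, not a residual. The paper writes it as $\partial_{\tau_F}[\partial_{\nu_F}u_h]_F$ and bounds it by $\mu_\nc(T)$ via an inverse estimate and $\varepsilon\kappa_T\le h_T$. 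It therefore again needs the corrected efficiency argument for $\mu_\nc$.

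\textbf{Third gap: you squeeze the wrong bubbles.} For $\eta_2$, take a face bubble of width $\delta h_F$ with $\delta=\varepsilon/h_F<1$. This leaves $\int_F[\nabla I_hu_h\cdot\nu_F]_F\,b_F\,ds$ unchanged but gives $\lVert\nabla v\rVert_{L^2(\omega_F)}\approx\delta^{-1/2}\lvert[\nabla I_hu_h\cdot\nu_F]_F\rvert$. A non-robust factor $(h_T/\varepsilon)^{1/2}$ then appears in front of $\lVert\nabla(u-I_hu_h)\rVert_{L^2(\omega_F)}$ and of the $\varepsilon^2$-terms. The paper uses the unsqueezed $\flat_F=\lambda_a^2\lambda_b^2$: $\eta_2$ is the residual of the dominant Laplacian, and only the $\varepsilon^2$-residual $\eta_1$ needs squeezing.

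Likewise, for $\eta_f$ a contracted element bubble cannot satisfy $\lVert b\rVert^2_{L^2(T)}\approx|T|$. The standard $\flat_T$ works in both regimes, but only with its sharp scaling $\lVert D^2\flat_T\rVert\approx h_T^{-2}\lVert\flat_T\rVert$. With your lossy bound, the required inequality $\varepsilon\le h_T\kappa_T$ fails for $h_T<\varepsilon$; the correct requirement $\varepsilon\kappa_T^2\le h_T\kappa_T$ holds.
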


\begin{proof}
    \textbf{Efficiency of $\mu_{I_h}$.}
    A triangle inequality and the definition of the 
    $(\varepsilon,\pw,T)$-norm 
    proves 
    \begin{align*}
    \mu_{I_h}(T)=
        \lVert\nabla(u_h-I_h u_h)\rVert_{L^2(T)}
        &\leq \lVert\nabla(u-I_h u_h)\rVert_{L^2(T)}
           + \lVert\nabla_\pw(u-u_h)\rVert_{L^2(T)}\\
        &\leq \lVert\nabla(u-I_h u_h)\rVert_{L^2(T)}
            + \ennorm{u-u_h}_{\varepsilon,\pw,T}.
    \end{align*}
    
    \textbf{Efficiency of $\eta_f$.}
    The term $\eta_f(T)$ is estimated by the bubble function
    technique \cite{Verfuerth2013}.
    Let $\flat_T=\lambda_1^2\lambda_2^2\lambda_3^2\in H^2_0(T)$ 
    be the $H^2_0$ bubble function, where $\lambda_1,\lambda_2,\lambda_3$
    are the barycentric coordinates of $T$.
    Define $\varphi_T:=\Pi_0 f \flat_T$.
    An equivalence of norms argument shows
    \begin{align*}
        \lVert f\rVert_{L^2(T)}^2
        \lesssim \int_T f \varphi_T\,dx 
           + \lVert f-\Pi_0 f\rVert_{L^2(T)}^2.
    \end{align*}
    Since $u\in H^2_0(\Omega)$ solves \eqref{e:sing_pert_weak_form},
    it satisfies 
    \begin{align*}
        \int_T f \varphi_T\,dx
        = \varepsilon^2 (D^2 u, D^2 \varphi_T)_{L^2(T)} 
          + (\nabla u,\nabla \varphi_T)_{L^2(T)}.
    \end{align*}
    Since $D^2_\pw u_h$ and $\nabla I_h u_h$ are constant on $T$,
    the vanishing boundary conditions of $\varphi_T$ and $\nabla \varphi_T$ 
    imply 
    \begin{align*}
        \varepsilon^2 (D^2_\pw u_h, D^2 \varphi_T)_{L^2(T)} 
          + (\nabla I_h u_h,\nabla \varphi_T)_{L^2(T)}=0.
    \end{align*}
    The combination of the two previously displayed inequalities 
    leads to 
    \begin{align*}
        \int_T f \varphi_T\,dx
        = \varepsilon^2 (D^2_\pw (u-u_h), D^2 \varphi_T)_{L^2(T)} 
          + (\nabla (u-I_h u_h),\nabla \varphi_T)_{L^2(T)}.
    \end{align*}
    The combination of the previous inequalities with 
    a Cauchy inequality therefore proves 
    \begin{align*}
        &\int_T f \varphi_T\,dx% \\
%         &\quad
        \lesssim 
             \ennorm{\varphi_T}_{\varepsilon,\pw,T}
             \left(\ennorm{u-u_h}_{\varepsilon,\pw,T}
            +\lVert \nabla (u-I_h u_h)\rVert_{L^2(T)}\right).
    \end{align*}
    The definition of $\varphi_T$ and a scaling argument show that 
    \begin{align*}
        \min\{h_T,h_T^2/\varepsilon\}
         \ennorm{\varphi_T}_{\varepsilon,\pw,T}
        \lesssim \lVert f\rVert_{L^2(T)}.
    \end{align*}
    The combination of the previous inequalities with 
    a Cauchy inequality therefore proves 
    \begin{align*}
        \min\{h_T,h_T^2/\varepsilon\} \lVert f\rVert_{L^2(T)}
        \lesssim \ennorm{u-u_h}_{\varepsilon,\pw,T}
            +\lVert \nabla (u-I_h u_h)\rVert_{L^2(T)}
            +\osc_\varepsilon(f,\{T\}).
    \end{align*}
    
    \textbf{Efficiency of $\eta_1$.}
    For the estimation of $\eta_1$, fix $T\in\tri$ and 
    let $F\in\mathcal{F}(T)$.
    We first split the error 
    in the tangential and the normal component, i.e., 
    \begin{align*}
        \lVert [D^2_\pw u_h\nu_F]_F\rVert_{L^2(F)}
        \leq \lVert [D^2_\pw u_h\nu_F]_F\cdot\nu_F\rVert_{L^2(F)}
          + \lVert [D^2_\pw u_h\nu_F]_F\cdot\tau_F\rVert_{L^2(F)}.
    \end{align*}
    Since $h_F^{-1}\min\{\varepsilon^{3/2},h_F^{1/2}\varepsilon\}
    \leq (\varepsilon/\kappa_T)^{1/2}$, the tangential component 
    times $\varepsilon^{3/2}\kappa_T^{1/2}$
    is bounded by $\mu_\nc(T)$ through an inverse inequality.
    To bound the normal component, we     
    follow the idea of 
    \cite{GallistlTian2024}, see also \cite{Verfuerth2013} for the 
    second-order case, and 
    employ an edge bubble function $\chi_{F,\delta}$ that takes 
    the singular perturbation into account.
    To this end, let $0<\delta\leq 1$.
    The function $\chi_{F,\delta}$ is then constructed as a bubble 
    function of a triangles with height $\delta h_F$ over the face $F$
    and has the properties 
    $\chi_{F,\delta}\in H^2_0(\omega_F)$,
    $\chi_{F,\delta}\vert_F=0$, 
%     \begin{align*}
%         \chi_{F,\delta}\vert_{E}=0
%         \quad \text{and}\quad 
%         \nabla\chi_{F,\delta}\vert_{E}=0
%         \quad \text{ for all }E\in\mathcal{F}\setminus\{F\},
%     \end{align*}
    and the scaling properties 
    \begin{equation}\label{e:scaling_bubble}
        \begin{aligned}
            h_F\delta\lVert \partial\chi_{F,\delta}/\partial \nu_F\rVert_{L^\infty(F)}
          &\approx h_F^{-1}\delta^{-1/2} \lVert \chi_{F,\delta}\rVert_{L^2(\omega_F)}
          \approx \delta^{1/2} 
             \lVert \nabla \chi_{F,\delta}\rVert_{L^2(\omega_F)}\\
          &\approx h_F\delta^{3/2} 
             \lVert D^2 \chi_{F,\delta}\rVert_{L^2(\omega_F)}
          \approx 1.
        \end{aligned}
    \end{equation}
    Details can be found in \cite{Verfuerth1998,Verfuerth2013,GallistlTian2024}.

    Define $\varphi:=[D^2_\pw u_h\nu_F]_F\cdot\nu_F 
    \nabla \chi_{F,\delta}$.
    Since $\nabla \chi_{F,\delta}$ is a quadratic bubble 
    along $F$ pointing in normal direction, it follows 
    \begin{align*}
        (h_F\delta)^{-1}
           \lVert [D^2_\pw u_h\nu_F]_F\cdot\nu_F\rVert_{L^2(F)}^2
        &\lesssim 
          \int_F ([D^2_\pw u_h\nu_F]_F\cdot\nu_F) \nu_F\cdot \varphi\,ds \\
        &= \int_F ([D^2_\pw u_h\nu_F]_F)\cdot \varphi\,ds 
        = \int_{\omega_F} D_\pw^2 u_h:D\varphi\,dx\\
        &= ([D^2_\pw u_h\nu_F]_F\cdot\nu_F) 
           \int_{\omega_F} D_\pw^2 u_h:D^2\chi_{F,\delta}\,dx.
    \end{align*}
    Since $\chi_{F,\delta}\in H^2_0(\Omega)$ 
    is a suitable test function for problem \eqref{e:sing_pert_weak_form},
    we arrive at
    \begin{align*}
        & h_F^{-1/2}\delta^{-1} \, 
           \lVert [D^2_\pw u_h\nu_F]_F\cdot\nu_F\rVert_{L^2(F)}\\
        &\qquad 
        \lesssim 
             \int_{\omega_F} D_\pw^2 (u_h-u):D^2\chi_{F,\delta}\,dx
           + \varepsilon^{-2} \int_{\omega_F} f \,\chi_{F,\delta}\,dx\\
        &\qquad \qquad \qquad \qquad \qquad \qquad \qquad \qquad \qquad 
           + \varepsilon^{-2}
             \int_{\omega_F} \nabla u\cdot \nabla \chi_{F,\delta}\,dx.
    \end{align*}
    Since $\nabla I_h u_h$ is piecewise constant and 
    $\chi_{F,\delta}\vert_{\partial T}=0$, 
    the last term equals 
    \begin{align*}
        \varepsilon^{-2}\int_{\omega_F} \nabla u\cdot \nabla \chi_{F,\delta}\,dx
        = \varepsilon^{-2}\int_{\omega_F} \nabla (u-I_h u_h) \cdot 
              \nabla \chi_{F,\delta}\,dx.
    \end{align*}
    The combination of the foregoing displayed formulae 
    leads to 
    \begin{align*}
        \lVert [D_\pw^2 u_h\nu_F]_F\cdot\nu_F\rVert_{L^2(F)}
        &
        \lesssim 
          h_F^{1/2} \delta
          \lVert D_\pw^2(u-u_h)\rVert_{L^2(\omega_F)}\, 
             \lVert D^2\chi_{F,\delta}\rVert_{L^2(\omega_F)}\\
        &\qquad
         + \varepsilon^{-2}h_F^{1/2} \delta
           \lVert f\rVert_{L^2(\omega_F)} \,
             \lVert\chi_{F,\delta}\rVert_{L^2(\omega_F)}\\
        &\qquad
         + \varepsilon^{-2} h_F^{1/2} \delta
           \lVert \nabla(u-I_h u_h)\rVert_{L^2(\omega_F)} 
            \, \lVert \nabla\chi_{F,\delta}\rVert_{L^2(\omega_F)}.
    \end{align*}
    Let $\delta:=\min\{1,\varepsilon/h_F\}$. 
    Then $\kappa_T/\delta \approx h_F/\varepsilon$.
    This and 
    the scaling of the bubble function from \eqref{e:scaling_bubble}
    then prove
    \begin{align*}
        &\varepsilon^{3/2}\kappa_T^{1/2}
        \lVert [D_\pw^2 u_h\nu_F]_F\cdot\nu_F\rVert_{L^2(F)}\\
        &\qquad 
        \lesssim \varepsilon \lVert D_\pw^2(u-u_h)\rVert_{L^2(\omega_F)}
         + \varepsilon\kappa_T^2
           \lVert f\rVert_{L^2(\omega_F)} 
         + \kappa_T 
           \lVert \nabla(u-I_h u_h)\rVert_{L^2(\omega_F)}.
    \end{align*}
    Since $\varepsilon\kappa_T \leq h_T$ and $\kappa_T\leq 1$, the efficiency 
    of $h_T\kappa_T\lVert f\rVert_{L^2(T)}$ 
    therefore proves 
    \begin{align*}
        &\varepsilon^{3/2}\kappa_T^{1/2}
        \lVert [D_\pw^2 u_h\nu_F]_F\cdot\nu_F\rVert_{L^2(F)}\\
        &\qquad 
        \lesssim \ennorm{u-u_h}_{\varepsilon,\pw,\omega_F}
         + \lVert \nabla(u-I_h u_h)\rVert_{L^2(\omega_F)}
         + \osc_\varepsilon(f,\{T_+,T_-\}).
    \end{align*}
    
    \textbf{Efficiency of $\eta_2$.}
    For the efficiency of 
    $h_T^{1/2}\kappa_T
    \,\lVert[\nabla I_h u_h\cdot\nu_F]_F\rVert_{L^2(F)}$,
    define the function $\flat_F=\lambda_a^2\lambda_b^2\in H^2_0(\omega_F)$
    and let $\varphi:=[\nabla I_h u_h\cdot\nu_F]_F\flat_F$.
    Then 
    \begin{align*}
        \lVert[\nabla I_h u_h\cdot\nu_F]_F\rVert_{L^2(F)}^2
        & \lesssim \int_F [\nabla I_h u_h\cdot\nu_F]_F \,\varphi\,ds
        = \int_{\omega_F} \nabla I_h u_h \cdot\nabla \varphi\,dx\\
        &= \int_{\omega_F} \nabla (I_h u_h-u) \cdot\nabla \varphi\,dx
          + \int_{\omega_F} \nabla u \cdot\nabla \varphi\,dx.
    \end{align*}
    Since $\varphi\in H^2_0(\omega_F)$ is a suitable test function
    in \eqref{e:sing_pert_weak_form}, the
    last term equals 
    \begin{align*}
        &\int_{\omega_F} \nabla u \cdot\nabla \varphi\,dx
         = \int_{\omega_F} f \,\varphi\,dx
           - \varepsilon^2 \int_{\omega_F} D^2 u :D^2\varphi\,dx\\
        &\qquad
        = \int_{\omega_F} f \,\varphi\,dx
           - \varepsilon^2 \int_{\omega_F} D^2_\pw (u-u_h) :D^2\varphi\,dx
           - \varepsilon^2 \int_{\omega_F} D^2_\pw u_h :D^2\varphi\,dx
           .
    \end{align*}
    Piecewise integration by parts for the last term leads to 
    \begin{align*}
        - \varepsilon^2 \int_{\omega_F} D^2_\pw u_h :D^2\varphi\,dx
        &= -\varepsilon^2 \int_F [D^2_\pw u_h \nu_F]_F \cdot \nabla \varphi\,ds\\
        &\leq \varepsilon^2 \lVert [D^2_\pw u_h \nu_F]_F\rVert_{L^2(F)}
           \lVert \nabla \varphi\rVert_{L^2(F)}.
    \end{align*}
    The scaling of $\varphi$ reads 
    \begin{align*}
        h_T^{1/2}\lVert \nabla \varphi\rVert_{L^2(F)}
        \approx
        h_T^{-1}\lVert \varphi\rVert_{L^2(\omega_F)}
        \approx
        \lVert \nabla \varphi\rVert_{L^2(\omega_F)}
        &\approx
        h_T\lVert D^2\varphi\rVert_{L^2(\omega_F)}\\
        &\approx 
        \lvert [\nabla I_h u_h\cdot\nu_F]_F\rvert .
    \end{align*}
    This together with the above displayed formulae leads to 
    \begin{align*}
        &h_T^{1/2}\lVert[\nabla I_h u_h\cdot\nu_F]_F\rVert_{L^2(F)}
        \lesssim 
        \lVert \nabla(u-I_h u_h)\rVert_{L^2(\omega_F)}
        + h_T\lVert f\rVert_{L^2(\omega_F)}\\
        &\qquad \qquad \qquad 
        + \varepsilon^2 h_T^{-1} \lVert D^2_\pw(u-u_h)\rVert_{L^2(\omega_F)}
        + \varepsilon^2 h_T^{-1/2} \lVert [D^2_\pw u_h\nu_F]_F\rVert_{L^2(F)}.
    \end{align*}
    This, $\kappa_T\leq 1$, 
    $\varepsilon h_T^{-1}\kappa_T = \min\{1,\varepsilon/h_T\}\leq 1$, 
    and 
    \begin{align*}
        \varepsilon^{1/2} h_T^{-1/2} \kappa_T
        = \min\{\varepsilon^{1/2} h_T^{-1/2}, h_T^{1/2}\varepsilon^{-1/2}\}
        \leq \kappa_T^{1/2}
    \end{align*}
    imply
    \begin{align*}
        h_T^{1/2}\kappa_T \lVert[\nabla I_h u_h\cdot\nu_F]_F\rVert_{L^2(F)}
        \lesssim \lVert \nabla(u-I_h u_h)\rVert_{L^2(\omega_F)}
           + \ennorm{u-u_h}_{\varepsilon,\pw, \omega_F}\\
           + \eta_f(T_+)+\eta_f(T_-)
           + \eta_1(T_+).
    \end{align*}
    This and the efficiency of $\eta_f$ and $\eta_1$ proves the 
    efficiency of $\eta_2$.
    
    \textbf{Efficiency of $\mu_\nc$.}
    In the case $h_T\leq \varepsilon$, the efficiency of $\mu_\nc(T)$ 
    can be proved with standard arguments, see 
    \cite{BeiraodaVeigaNiiranenStenberg2007,HuShi2009}.
    If $\varepsilon<h_T$, the multiplicative trace inequality
    shows as in \cite{GallistlTian2024} that 
    \begin{align*}
        &\varepsilon \lVert[\nabla_\pw u_h]_F\rVert_{L^2(F)}^2\\
        &\quad
        \lesssim \frac{\varepsilon}{h_F} \lVert\nabla_\pw (u-u_h)\rVert_{L^2(\omega_F)}^2
           + \varepsilon \lVert \nabla_\pw(u-u_h)\rVert_{L^2(\omega_F)}
             \,\lVert D_\pw^2(u-u_h)\rVert_{L^2(\omega_F)}\\
        &\quad 
        \lesssim \varepsilon \lVert D_\pw^2(u-u_h)\rVert_{L^2(\omega_F)}^2
           + \lVert \nabla_\pw(u-u_h)\rVert_{L^2(\omega_F)}.
    \end{align*}
\end{proof}

\section{The von K\'arm\'an equations}
\label{s:vK}

This section is devoted to the nonlinear von K\'arm\'an equations 
posed on the polygonal, bounded Lipschitz-domain $\Omega\subseteq \R^2$.
Section~\ref{ss:vK_cont} introduces the weak formulation, 
while Section~\ref{ss:vK_discrete} introduces the discrete 
problem and states an a~priori error estimate.
Section~\ref{ss:vK_errorest} contains the main part, namely the 
definition of the error estimator and the proof of its 
reliability and efficiency.

In this section, let $H^k(\Omega;\R^2)$ (resp.\ $L^p(\Omega;\R^2)$)
denote the vector valued space $(H^k(\Omega))^2$
(resp.\ $(L^p(\Omega))^2$).

\subsection{The continuous problem}\label{ss:vK_cont}

Define the bilinear form $A:H^2(\Omega;\R^2)\times H^2(\Omega;\R^2)\to\R$ 
and the trilinear form 
$B:H^2(\Omega;\R^2)\times H^1(\Omega;\R^2)\times H^1(\Omega;\R^2)\to\R$
for $\Psi=(\psi_1,\psi_2)$, $\Phi=(\varphi_1,\varphi_2)$, and 
$\Theta=(\theta_1,\theta_2)$ by 
\begin{align*}
    A(\Psi,\Phi) &= a(\psi_1,\varphi_1) + a(\psi_2,\varphi_2) ,
    \qquad \text{for all }\Psi,\Phi\in H^2(\Omega;\R^2),
\end{align*}
and 
\begin{align*}
    B(\Theta,\Psi,\Phi) &= b(\theta_1,\psi_2,\varphi_1) + b(\theta_2,\psi_1,\varphi_1)
        - b(\theta_1,\psi_1,\varphi_2)
\end{align*}
for all $\Theta\in H^2(\Omega;\R^2), \Psi,\Phi\in H^1(\Omega;\R^2)$,
where the bilinear and trilinear forms 
$a:H^2(\Omega)\times H^2(\Omega)\to\R$ and 
$b:H^2(\Omega)\times H^1(\Omega)\times H^1(\Omega)$ are defined by 
\begin{align*}
    a(\psi,\varphi) &= \int_\Omega D^2 \psi:D^2\varphi\,dx,
    \qquad \text{for all }\psi,\phi\in H^2(\Omega),\\
    b(\theta,\psi,\varphi) 
       &= \frac{1}{2}\int_\Omega
            (\cof(D^2\theta)\nabla \psi)\cdot\nabla \varphi\,dx
    \qquad \text{for all }\theta\in H^2(\Omega), \psi,\varphi\in H^1(\Omega).
\end{align*}
Furthermore, define the right-hand side $F:L^2(\Omega;\R^2)\to\R$ by 
\begin{align*}
    F(\Phi):= \int_\Omega f\,\varphi_1\,dx
    \qquad \text{for all }\Phi=(\varphi_1,\varphi_2)\in L^2(\Omega;\R^2).
\end{align*}
The continuous problem seeks $\Psi\in H^2_0(\Omega;\R^2)$ with 
\begin{align}\label{e:vK_cont_problem}
    A(\Psi,\Phi) + B(\Psi,\Psi,\Phi) = F(\Phi)
    \qquad \text{for all }\Phi\in H^2_0(\Omega;\R^2).
\end{align}
This problem is equivalent to $N(\Psi,\bullet)=0$ on $H^2_0(\Omega;\R^2)$
with  
\begin{align*}
    N(\Psi;\Phi):=A(\Psi,\Phi)+B(\Psi,\Psi,\Phi)-F(\Phi)
\end{align*}
with derivatives 
\begin{subequations}
\begin{align}
    DN(\Psi;\Xi,\Phi) &= \langle A\Xi + B'(\Psi)\Xi,\Phi\rangle,\\
    \langle B'(\Psi)\Xi,\Phi\rangle&= 2 B(\Psi,\Xi,\Phi),\\
    \label{e:D2N}
    D^2N(\Psi;\Xi,\Xi,\Phi) &= \langle B''(\Xi,\Xi),\Phi\rangle 
       = 2B(\Xi,\Xi,\Phi).
\end{align}
\end{subequations}

The following theorem states the existence and regularity 
of an exact solution.

\begin{theorem}
    There exists at least one solution $\Psi\in H^2_0(\Omega;\R^2)$ 
    to \eqref{e:vK_cont_problem}.
    If $f$ is sufficiently small, the solution is unique.
    Moreover, there exists 
    a positive parameter $\alpha$
    (with $1/2<\alpha\leq 1$ in the case of clamped boundary conditions)
    such that $\Psi\in H^{2+\alpha}(\Omega;\R^2)$ and 
    \begin{align}\label{e:vK_stab}
        \lVert \Psi\rVert_{H^{2+\alpha}(\Omega)} 
        \lesssim \lVert f\rVert_{L^2(\Omega)}.
    \end{align}
\end{theorem}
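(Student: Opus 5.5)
The statement bundles three claims: existence, uniqueness for small data, and elliptic regularity with the bound \eqref{e:vK_stab}. I would handle them in that order, using the classical von K\'arm\'an theory (Ciarlet, Berger, Knightly) and adapting it to the present weak form with $B$.

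\textbf{Existence.} First I would check that the two versions of the bracket agree for $H^2_0$ arguments. By the divergence-free property of the rows of $\cof(D^2\theta)$, one has
\begin{align*}
\int_\Omega (\cof(D^2\theta)\nabla\psi)\cdot\nabla\varphi\,dx = \int_\Omega [\theta,\psi]\varphi\,dx,
\end{align*}
and this expression is symmetric in all three arguments. This identity is first proved for smooth functions and then extended by density, since $b$ is continuous on $H^2\times H^2\times H^2$ via the embedding $H^2\hookrightarrow W^{1,4}$. Symmetry gives the key cancellation
\begin{align*}
B(\Psi,\Psi,\Psi) = b(\psi_1,\psi_2,\psi_1) + b(\psi_2,\psi_1,\psi_1) - b(\psi_1,\psi_1,\psi_2) = b(\psi_1,\psi_1,\psi_2),
\end{align*}
which is not yet sign-definite. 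The standard remedy is to reformulate: solve the second equation exactly, $\psi_2 = -\tfrac12 G[\psi_1,\psi_1]$, where $G$ is the clamped biharmonic solution operator. This reduces the system to a single equation $\Delta^2\psi_1 = [\psi_1,\psi_2(\psi_1)] + f$. In this reduced equation the nonlinear term, tested with $\psi_1$, equals $-\tfrac12\|D^2\psi_2\|^2\cdot 2\le 0$, which gives coercivity. The map $\psi_1\mapsto G[\psi_1,G[\psi_1,\psi_1]]$ is compact on $H^2_0$, because the bracket maps $H^2\times H^2$ into $W^{-1,p}$ or $H^{-1-s}$ compactly after composing with $G$. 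Existence then follows from Leray--Schauder, or equivalently from minimising the energy functional, which is coercive and weakly lower semicontinuous.

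\textbf{Uniqueness for small $f$.} Testing with the solution itself gives the a priori bound $\|\Psi\|_{H^2}\lesssim\|f\|$. For two solutions, I would subtract the equations and use the bound $|B(\Theta,\Psi,\Phi)|\lesssim\|\Theta\|_{H^2}\|\Psi\|_{H^2}\|\Phi\|_{H^2}$ to obtain
\begin{align*}
\|\Psi-\tilde\Psi\|_{H^2}^2\lesssim(\|\Psi\|_{H^2}+\|\tilde\Psi\|_{H^2})\,\|\Psi-\tilde\Psi\|_{H^2}^2 .
\end{align*}
When $f$ is small, the prefactor is less than one and the difference must vanish.

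\textbf{Regularity.} Here I would invoke the regularity of the clamped biharmonic problem on polygons (Blum--Rannacher, Grisvard): for a right-hand side in $H^{-1}$ (or $H^{-2+\alpha}$), the solution lies in $H^{2+\alpha}$ with $\alpha>1/2$ determined by the interior angles. It then suffices to place the bracket $[\psi_i,\psi_j]$ in $H^{-1}$. For $\psi\in H^2_0$, the product $\cof(D^2\theta)\nabla\psi$ lies in $L^{2}\cdot L^\infty_{\text{loc-ish}}$; more precisely, $\nabla\psi\in W^{1,2}\subset L^q$ for every $q<\infty$, so the product is in $L^{p}$ for all $p<2$. Then
\begin{align*}
\operatorname{div}(\cof(D^2\theta)\nabla\psi)\in W^{-1,p}\hookrightarrow H^{-2+\alpha}\quad\text{for }p\text{ close to }2,
\end{align*}
which is enough. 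A bootstrap combined with the quadratic bound $\|[\psi,\psi]\|_{H^{-1}}\lesssim\|\psi\|_{H^2}^2\lesssim\|f\|^2$ gives \eqref{e:vK_stab}; the implied constant absorbs the higher powers when $\|f\|$ is bounded, and the bound is linear in the small-data regime.

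\textbf{Main obstacle.} The step I expect to be hardest is making this regularity step and the linear form of \eqref{e:vK_stab} rigorous on a nonconvex domain. The estimate as stated should be read with a constant that depends on a bound for $f$. In the paper itself I would largely cite Ciarlet and Blum--Rannacher for it rather than reprove it.
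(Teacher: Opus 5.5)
Your proposal is correct and follows the same route as the paper, whose proof only cites Ciarlet and Knightly for existence and small-data uniqueness and Blum--Rannacher/Grisvard for the biharmonic regularity behind \eqref{e:vK_stab}; you have reconstructed those classical arguments (reduction to one equation for $\psi_1$ with a compact nonlinearity of good sign, a difference estimate for small data, and a shift theorem plus bootstrap). Your caveat that \eqref{e:vK_stab} is linear only with a constant depending on an upper bound for $\lVert f\rVert_{L^2(\Omega)}$ (the bracket contributes $\lVert f\rVert_{L^2(\Omega)}^2$) is accurate and sharper than the paper's statement, and the remaining slips are cosmetic: integration by parts gives $\int_\Omega(\cof(D^2\theta)\nabla\psi)\cdot\nabla\varphi\,dx=-\int_\Omega[\theta,\psi]\varphi\,dx$, the nonlinear term tested with $\psi_1$ equals $-2\lVert D^2\psi_2\rVert_{L^2(\Omega)}^2$, and your H\"older argument places the bracket only in $W^{-1,p}$ with $p<2$, so for $\alpha=1$ the $H^{-1}$ bound you quote must come from the bootstrap step (or Wente's inequality).
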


\begin{proof}
    The proof of the existence is contained in 
    \cite[Theorem~5.8-3, pp.~416]{Ciarlet_bookII_2022} 
    and \cite[Theorem~4, pp.~238]{Knightly1967}.
    The proof of the uniqueness is contained in 
    \cite[Theorem~5]{Knightly1967}.
    The regularity theory for the biharmonic problem 
    \cite{BlumRannacher1980,Grisvard1992} proves 
    \eqref{e:vK_stab}.
\end{proof}

Throughout the remaining parts of the paper, we assume, 
that the exact solution $\Psi\in H^2_0(\Omega;\R^2)$ is 
\emph{nonsingular}, that is \cite[Def.~2.4, pp.466]{Keller1975}
the Fr\'echet derivative is nonsingular. This then implies
the continuous inf-sup condition of the linearized form 
\begin{align}\label{e:vK_inf-sup}
    \lVert D^2\Phi\rVert_{L^2(\Omega)}
    \lesssim \sup_{\Theta\in H^2_0(\Omega;\R^2)\setminus\{0\}} 
         \frac{DN(\Psi;\Phi,\Theta)}{\lVert D^2\Theta\rVert_{L^2(\Omega)}}.
\end{align}

The remaining part of this section summarizes some 
frequently used inequalities.
For any $1\leq q <\infty$ the following embedding holds for 
$\Omega\subseteq \R^2$
\begin{align}\label{e:embedding}
    H^1_0(\Omega)\hookrightarrow L^q(\Omega),
    \qquad 
    \lVert v\rVert_{L^q(\Omega)}
    \leq C(q)\lVert \nabla v\rVert_{L^2(\Omega)}.
\end{align}
Furthermore, Sobolev's inequality \cite{AdamsFournier2003} states that
\begin{align}\label{e:sobolev}
    \| D^2 v\|_{L^q(\Omega)}
    \leq C(\beta, q)
    \| v\|_{H^{2+\beta}(\Omega)}
    \quad \text{for }
    0\leq\beta<1 \text{ and } 2\leq q\leq\frac{2}{1-\beta}
\end{align}
and 
\begin{align}\label{e:sobolev_infty}
    \lVert v\rVert_{L^\infty(\Omega)}
    \leq C(\beta) \lVert v\rVert_{H^{1+\beta}(\Omega)}
    \quad \text{for }
    \beta>0.
\end{align}
The regularity index $\alpha\geq 1/2$ therefore implies
\begin{align}\label{e:embedding_L4}
    \| D^2 v\|_{L^4(\Omega)}
    \lesssim
    \| v\|_{H^{2+\alpha}(\Omega)}
\end{align}
on Lipschitz polygons.

The $\tri$-piecewise version of $B$ where the Hessian and the gradient 
are replaced by their piecewise conterparts is denoted by $B_\pw$
(see also Section~\ref{ss:vK_discrete}).
\begin{lemma}[upper bounds of $B$ and its variants] \label{l:vK_cont_B}
    The following inequalities hold 
    \begin{align}
        B(\Theta,\Psi,\Phi)
            &\lesssim \lVert\Theta\rVert_{H^{2+\alpha}(\Omega)}
                     \,\lVert\nabla\Psi\rVert_{L^2(\Omega)}
                     \,\lVert\nabla\Phi\rVert_{L^q(\Omega)}
        \label{e:vK_cont_B_2alpha2q}
    \end{align}
    for all $\Theta\in H^{2+\alpha}(\Omega;\R^2), 
               \Psi\in H^1(\Omega;\R^2),\Phi\in W^{1,q}(\Omega;\R^2)$
    and $\alpha/2\leq q<\infty$.
    Moreover,
    \begin{align}
        B_\pw(\Theta,\Psi,\Phi)
            & \lesssim \lVert D^2_\pw\Theta\rVert_{L^2(\Omega)}
                     \,\lVert\nabla\Psi\rVert_{L^p(\Omega)}
                     \,\lVert D^2\Phi\rVert_{L^2(\Omega)}
        \label{e:vK_cont_B_2p2}\\
            &\lesssim \lVert D^2_\pw\Theta\rVert_{L^2(\Omega)}
                     \,\lVert D^2\Psi\rVert_{L^2(\Omega)}
                     \,\lVert D^2\Phi\rVert_{L^2(\Omega)}
        \label{e:vK_cont_B_222}
    \end{align}
    for all $\Theta\in H^2(\tri;\R^2),\Psi,\Phi\in H^2(\Omega;\R^2)$,
    $p>2$ (where the constant depends on $p$). Furthermore,
    \begin{align}
        B_\pw(\Theta,\Psi,\Phi)
            & \lesssim \lVert D^2_\pw\Theta\rVert_{L^2(\Omega)}
                     \,\lVert D^2\Psi\rVert_{L^2(\Omega)}
                     \,\lVert \nabla\Phi\rVert_{L^q(\Omega)}
        \label{e:vK_cont_B_22q}
    \end{align}
    for all $\Theta\in H^{2}(\tri;\R^2), \Psi,\Phi\in H^2(\Omega;\R^2)$, 
    $q>2$ (where the constant depends on $q$), and
    \begin{align}
        B_\pw(\Theta,\Psi,\Phi)
            & \lesssim \lVert D^2_\pw\Theta\rVert_{L^2(\Omega)}
                     \,\lVert \nabla \Psi\rVert_{L^2(\Omega)}
                     \,\lVert \nabla\Phi\rVert_{L^\infty(\Omega)}
        \label{e:vK_cont_B_22infty}
    \end{align}
    for all $\Theta\in H^{2}(\tri;\R^2), \Psi\in H^1(\Omega;\R^2), 
    \Phi\in W^{1,\infty}(\Omega;\R^2)$.
\end{lemma}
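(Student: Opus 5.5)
All estimates follow from the generalized Hölder inequality applied to the three factors of the integrand in $b$, plus Sobolev embeddings. Since $B$ is a sum of three terms of the form $b(\theta_i,\psi_j,\varphi_k)$, it suffices to prove each bound for the scalar form
$$b_\pw(\theta,\psi,\varphi)=\tfrac12\sum_{T\in\tri}\int_T(\cof(D^2\theta)\nabla\psi)\cdot\nabla\varphi\,dx .$$
The cofactor is a linear isometry of $\R^{2\times2}$, so $|\cof(D^2\theta)|=|D^2\theta|$ pointwise. Consequently
$$|b_\pw(\theta,\psi,\varphi)|\le \tfrac12\lVert D^2_\pw\theta\rVert_{L^{r_1}}\lVert\nabla\psi\rVert_{L^{r_2}}\lVert\nabla\varphi\rVert_{L^{r_3}}
\quad\text{whenever}\quad 1/r_1+1/r_2+1/r_3=1 .$$
Each estimate then comes down to choosing exponents and invoking an embedding.

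For \eqref{e:vK_cont_B_2alpha2q}, take $r_2=2$, $r_3=q$ and $1/r_1=1/2-1/q$. By \eqref{e:sobolev} with $\beta=\alpha$, $\lVert D^2\theta\rVert_{L^{r_1}}\lesssim\lVert\theta\rVert_{H^{2+\alpha}}$ provided $r_1\le 2/(1-\alpha)$. This is equivalent to $q\ge 2/\alpha$, so I read the stated condition "$\alpha/2\le q$" as the intended $2/\alpha\le q$. If $\alpha=1$, the embedding $H^1\hookrightarrow L^r$ for all finite $r$ is used instead.

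For \eqref{e:vK_cont_B_2p2}, take $r_1=2$, $r_2=p$ and $1/r_3=1/2-1/p$, so $r_3<\infty$ since $p>2$. Apply the embedding \eqref{e:embedding} to the components of $\nabla\varphi$, which lie in $H^1_0$ when $\varphi\in H^2_0$; for general $H^2$ functions on a Lipschitz domain, use $H^1\hookrightarrow L^{r_3}$ together with a Poincaré/Friedrichs argument in the homogeneous setting that is actually used. This gives $\lVert\nabla\varphi\rVert_{L^{r_3}}\lesssim\lVert D^2\varphi\rVert_{L^2}$. Estimate \eqref{e:vK_cont_B_222} then follows from \eqref{e:vK_cont_B_2p2} by applying the same embedding to $\nabla\psi$ in $L^p$, for instance with $p=4$.

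Estimate \eqref{e:vK_cont_B_22q} is symmetric to this. Take $r_1=2$, $r_3=q$ and $1/r_2=1/2-1/q$, then bound $\lVert\nabla\psi\rVert_{L^{r_2}}\lesssim\lVert D^2\psi\rVert_{L^2}$ by the embedding. Estimate \eqref{e:vK_cont_B_22infty} is the direct Hölder case $(2,2,\infty)$ and needs no embedding.

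The only delicate points are bookkeeping ones. One is the exponent condition in the first estimate. The other is that \eqref{e:embedding} is stated for $H^1_0$, whereas $\nabla\psi$ for $\psi\in H^2(\Omega)$ need not vanish on the boundary. To handle the latter, I would state the bounds with the full $H^2$ norm, or note that in every later application the arguments lie in $H^2_0$, where $\lVert v\rVert_{H^2}\approx\lVert D^2v\rVert_{L^2}$.
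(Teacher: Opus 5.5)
Your proposal is correct and follows the paper's argument, whose proof is exactly this generalized H\"older inequality combined with \eqref{e:embedding} and \eqref{e:sobolev}. Both of your corrections to the statement are also right: the condition in \eqref{e:vK_cont_B_2alpha2q} should read $q\ge 2/\alpha$ (with $q>2$ when $\alpha=1$, since $q=2$ would force $r_1=\infty$), and the bounds in terms of $\lVert D^2\Psi\rVert_{L^2(\Omega)}$ or $\lVert D^2\Phi\rVert_{L^2(\Omega)}$ require $\Psi,\Phi\in H^2_0(\Omega;\R^2)$, since otherwise affine $\Psi$ and $\Phi$ are counterexamples.
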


\begin{proof}
    The proof follows from H\"older inequalities and the 
    embedding \eqref{e:embedding} and the Sobolev inequality \eqref{e:sobolev}.
\end{proof}

\subsection{The discretization and a~priori error analysis} \label{ss:vK_discrete}

The discrete problem employs the discrete multilinear forms 
$A_\pw:H^2(\tri;\R^2)\times H^2(\tri;\R^2)\to\R$ and
$B_h:H^2(\tri;\R^2)\times H^1(\tri;\R^2)\times H^1(\tri;\R^2)\to\R$
with 
\begin{align*}
    a_\pw(\psi,\varphi) &= \int_\Omega D_\pw^2 \psi:D_\pw^2\varphi\,dx,\\
    A_\pw(\Psi,\Phi) &= a_\pw(\psi_1,\varphi_1) + a_\pw(\psi_2,\varphi_2) ,\\
    b_\pw(\theta,\psi,\varphi) 
       &= \frac{1}{2}\int_\Omega
            (\cof(D_\pw^2\theta)\nabla_\pw \psi)\cdot\nabla_\pw \varphi\,dx,\\
    B_\pw(\Theta,\Psi,\Phi) &= b_\pw(\theta_1,\psi_2,\varphi_1) 
        + b_\pw(\theta_2,\psi_1,\varphi_1)
        - b_\pw(\theta_1,\psi_1,\varphi_2),\\    
    B_h(\Theta,\Psi,\Phi) &= b_\pw(\theta_1,I_h\psi_2,I_h\varphi_1) 
        + b_\pw(\theta_2,I_h\psi_1,I_h\varphi_1)
        - b_\pw(\theta_1,I_h\psi_1,I_h\varphi_2),
\end{align*}
where $I_h$ denotes the nodal interpolation operator, see also 
Section~\ref{s:prelim}.
If we let $I_h$ act component-wise, we see the relation 
$B_h(\Theta,\Psi,\Phi)=B_\pw(\Theta,I_h\Psi,I_h\Phi)$.
The discrete problem seeks $\Psi_h\in \mathcal{M}_0(\tri;\R^2)$ with 
\begin{align}\label{e:karmandiscrete}
    A_\pw(\Psi_h,\Phi_h) + B_h(\Psi_h,\Psi_h,\Phi_h) = F(\Phi_h)
    \qquad \text{for all }\Phi_h\in \mathcal{M}_0(\tri;\R^2).
\end{align}

\begin{remark}
Alternatively, the right-hand side in the discrete system
\eqref{e:karmandiscrete} can be modified to $F(I_h\Phi_h)$
as in the prior section. Due to the approximation properties
of $I_h$, the arguments of the subsequent error analysis
apply to the modified formulation, too.
\end{remark}

The following theorem proves the existence of a discrete
solution of \eqref{e:karmandiscrete} and an a~priori error estimate. 
The error estimate will be employed in the 
a~posteriori error analysis below.

\begin{theorem}\label{t:vK_apriori}
    For sufficiently small $h$, there exists a (locally unique) discrete 
    solution $\Psi_h\in \mathcal{M}_0(\tri;\R^2)$ to 
    \eqref{e:karmandiscrete} and it satisfies 
    \begin{align*}
        \lVert D^2_\pw (\Psi-\Psi_h )\rVert_{L^2(\Omega)}
        \lesssim h^\alpha,
    \end{align*}
    with $\alpha$ from \eqref{e:vK_stab}.
\end{theorem}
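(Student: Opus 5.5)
The plan is to use the standard nonlinear framework for approximating nonsingular solutions: existence, local uniqueness and the error bound all come from a fixed-point argument for a discrete linearized operator around an approximation of $\Psi$. I will work with the Morley interpolant $I_\mathcal{M}\Psi$ and the discrete linearization
\[
\langle \mathcal{L}_h\Xi_h,\Phi_h\rangle := A_\pw(\Xi_h,\Phi_h)+2B_h(I_\mathcal{M}\Psi,\Xi_h,\Phi_h),
\qquad \Xi_h,\Phi_h\in\mathcal{M}_0(\tri;\R^2).
\]
(To be precise, $\langle B'(\Psi)\Xi,\Phi\rangle$ uses the symmetrized trilinear form, and the discrete version mirrors that symmetrization.)

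\textbf{Step 1 (discrete inf-sup).} I first show that for $h$ small, $\mathcal{L}_h$ satisfies a discrete inf-sup condition with a constant independent of $h$. Given $\Xi_h$, I apply \eqref{e:vK_inf-sup} to $J\Xi_h$ to obtain a $\Theta\in H^2_0$, and then test with $\Phi_h:=I_\mathcal{M}\Theta$.

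The consistency differences are of three kinds:
\begin{itemize}
\item[(a)] $A_\pw(\Xi_h-J\Xi_h,\cdot)$, which is handled by \eqref{e:enriching_stab_approx} together with the integral mean property \eqref{e:integral_mean_property}. Specifically, $A_\pw(J\Xi_h,I_\mathcal{M}\Theta)=A(J\Xi_h,\Theta)$ because $D^2_\pw I_\mathcal{M}\Theta$ is the elementwise mean.
\item[(b)] The replacement of $\Psi$ by $I_\mathcal{M}\Psi$ inside $B$. This is controlled by \eqref{e:vK_cont_B_2p2}--\eqref{e:vK_cont_B_22q}, \eqref{e:estimate_Morley_interpolation1} and the regularity \eqref{e:vK_stab}, and gives a factor $h^\alpha$.
\item[(c)] The replacement of $\nabla_\pw$ by $\nabla I_h$ inside $B_h$. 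I use $\lVert\nabla_\pw(v_h-I_hv_h)\rVert\lesssim h\lVert D^2_\pw v_h\rVert$, which follows from \eqref{e:nodal_interpolation_approx} applied elementwise. I also use the $L^p$-stability of $\nabla I_h$ on Morley functions, obtained by comparison with $J$.
\end{itemize}

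This is the main obstacle. Because of the factor $\cof(D^2 I_\mathcal{M}\Psi)\in L^2$ only, a bound needs $\nabla I_h\Xi_h\in L^p$ with $p>2$. I plan to get it from $\nabla I_h\Xi_h=\nabla I_h J\Xi_h$ (by \eqref{e:prop_enriching_nodal}), the $W^{1,p}$-stability of $I_h$ on $H^2$, and the embedding \eqref{e:embedding}. Combined with the regularity, this makes every perturbation $\lesssim h^{\min\{\alpha,1\}}\lVert D^2_\pw\Xi_h\rVert\,\lVert D^2\Theta\rVert$. A compactness (G\aa rding-type) argument using the compact lower-order term is the fallback if a perturbation is not small directly.

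\textbf{Step 2 (fixed point).} Next I define $\mu:\mathcal{M}_0(\tri;\R^2)\to\mathcal{M}_0(\tri;\R^2)$ by
\[
\langle \mathcal{L}_h\mu(\Theta_h),\Phi_h\rangle = F(\Phi_h)+2B_h(I_\mathcal{M}\Psi,\Theta_h,\Phi_h)-B_h(\Theta_h,\Theta_h,\Phi_h),
\]
which is well defined by Step 1. Its fixed points are exactly the solutions of \eqref{e:karmandiscrete}. I will show that $\mu$ maps the ball $\{\lVert D^2_\pw(\Theta_h-I_\mathcal{M}\Psi)\rVert\le R h^\alpha\}$ into itself and is a contraction there, for $h$ small.

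The key estimate is the residual $\lVert D^2_\pw(\mu(I_\mathcal{M}\Psi)-I_\mathcal{M}\Psi)\rVert\lesssim h^\alpha$. By the inf-sup condition this reduces to bounding $F(\Phi_h)-A_\pw(I_\mathcal{M}\Psi,\Phi_h)-B_h(I_\mathcal{M}\Psi,I_\mathcal{M}\Psi,\Phi_h)$. I subtract \eqref{e:vK_cont_problem} tested with $J\Phi_h$ and estimate the following pieces:
\begin{itemize}
\item $F(\Phi_h-J\Phi_h)\lesssim h^2$.
\item $A_\pw(\Psi-I_\mathcal{M}\Psi,\Phi_h)=0$, by \eqref{e:integral_mean_property}.
\item $A_\pw(\Psi,\Phi_h-J\Phi_h)\lesssim h^\alpha\lVert\Psi\rVert_{H^{2+\alpha}}$, by the standard Morley consistency argument: integrate by parts and use the vanishing face means of the jumps, plus \eqref{e:enriching_stab_approx}.
\item The $B$-differences, treated as in (b) and (c) of Step 1 using Lemma~\ref{l:vK_cont_B}.
\end{itemize}

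The contraction property follows from the quadratic structure: $\mu(\Theta_1)-\mu(\Theta_2)$ is governed by $B_h(\Theta_1-I_\mathcal{M}\Psi,\dots)$-type terms. These are bounded by $Rh^\alpha\lVert D^2_\pw(\Theta_1-\Theta_2)\rVert$ via \eqref{e:vK_cont_B_222} combined with discrete Sobolev bounds through $J$, and this factor is small for $h$ small.

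\textbf{Step 3 (conclusion).} Brouwer's or Banach's fixed point theorem then yields a locally unique $\Psi_h$ with $\lVert D^2_\pw(\Psi_h-I_\mathcal{M}\Psi)\rVert\lesssim h^\alpha$. The triangle inequality with \eqref{e:estimate_Morley_interpolation1}, applied at the $H^{2+\alpha}$ level via interpolation, gives the asserted bound.
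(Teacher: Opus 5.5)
\textbf{Gap in Step~1.} Your overall architecture matches the paper's appendix. With the symmetrization, your $\mathcal{L}_h$ is exactly $\tilde{\mathcal{A}}_\nc$, and the map $\mu$, the ball of radius $Rh^\alpha$ around $I_\mathcal{M}\Psi$, and your residual and contraction estimates in Step~2 are sound. The gap is in Step~1, and it is not the $L^p$ issue you single out.

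Transferring \eqref{e:vK_inf-sup} via $J\Xi_h$ and testing with $I_\mathcal{M}\Theta$ produces the principal-part consistency term
\[
  A_\pw(\Xi_h,I_\mathcal{M}\Theta)-A(J\Xi_h,\Theta)=A_\pw(\Xi_h-J\Xi_h,\Theta).
\]
Your identity $A_\pw(J\Xi_h,I_\mathcal{M}\Theta)=A(J\Xi_h,\Theta)$ is false, because $D^2J\Xi_h$ is not piecewise constant. The integral mean property \eqref{e:integral_mean_property} only gives $A_\pw(\Xi_h,I_\mathcal{M}\Theta)=A_\pw(\Xi_h,\Theta)$.

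Since $\Theta$ is merely some unit vector in $H^2_0(\Omega;\R^2)$, the best available bound for this term is $\lVert D^2_\pw(\Xi_h-J\Xi_h)\rVert_{L^2(\Omega)}$. By \eqref{e:enriching_apost} this is equivalent to the scaled gradient jumps of $\Xi_h$. For a generic Morley function (e.g.\ a single basis function) these jumps are of the same order as $\lVert D^2_\pw\Xi_h\rVert_{L^2(\Omega)}$, and \eqref{e:enriching_stab_approx} yields only an $O(1)$ constant.

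The same is true of the Hessian-slot difference $B_\pw(\Xi_h-J\Xi_h,\Psi,\Theta)$. It is not among your items (b) and (c), and Lemma~\ref{l:vK_cont_B} bounds it only by $O(\lVert D^2_\pw\Xi_h\rVert_{L^2(\Omega)})$. Hence your claim that every perturbation is $\lesssim h^{\min\{\alpha,1\}}\lVert D^2_\pw\Xi_h\rVert\,\lVert D^2\Theta\rVert$ fails, and nothing can be absorbed. Moreover, the continuous inf-sup condition controls $\lVert D^2 J\Xi_h\rVert$, and the listed properties of $J$ do not bound $\lVert D^2_\pw\Xi_h\rVert$ by it.

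\textbf{How the paper avoids this.} The paper does not attempt this transfer. It takes the discrete inf-sup condition for the nonconforming linearization $\mathcal{A}_\nc(\Theta_h,\Phi_h)=A_\pw(\Theta_h,\Phi_h)+B_\pw(\Psi,\Theta_h,\Phi_h)+B_\pw(\Theta_h,\Psi,\Phi_h)$ from \cite[Lemma~4.1]{MallikNataraj2016} as its base. It then notes that $\tilde{\mathcal{A}}_\nc-\mathcal{A}_\nc$ consists only of the lower-order $B_\pw$-differences $\Psi\to I_\mathcal{M}\Psi$, $\Psi\to I_hI_\mathcal{M}\Psi$ and $\nabla_\pw\to\nabla I_h$. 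These are precisely your (b) and (c), which are $O(h^\alpha)$ and can be absorbed.

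To make your Step~1 self-contained, you would have to replace the $J$/$I_\mathcal{M}$ transfer by a different argument:
\begin{enumerate}
\item Start from the exact coercivity $A_\pw(\Xi_h,\Xi_h)=\lVert D^2_\pw\Xi_h\rVert_{L^2(\Omega)}^2$ on $\mathcal{M}_0(\tri;\R^2)$.
\item Control the lower-order contribution $\lVert\nabla_\pw\Xi_h\rVert_{L^2(\Omega)}$ by a Schatz-type duality argument with the adjoint linearized problem at $\Psi$.
\item Use that the dual solution lies in $H^{2+\alpha}$, so the nonconformity term becomes $O(h^\alpha)$ by the standard Morley consistency argument.
\end{enumerate}
Your one-line compactness fallback would have to be exactly this argument, carried out. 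Once the discrete inf-sup condition is established, your Steps~2 and~3 go through as written.
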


\begin{proof}
    The proof follows similar as in \cite{MallikNataraj2016} 
    and is outlined in the appendix.
\end{proof}

The a~posteriori error estimates in Theorems~\ref{t:vK_reliability}
and \ref{t:vK_efficiency} below will contain the term 
\begin{align*}
    \lVert\nabla(\Psi-I_h\Psi_h)\rVert_{L^2(\Omega)}
\end{align*}
as part of the total error.
The following theorem shows that the predicted convergence rate
for this term is (at least) as good as that predicted for the $H^2$ error 
$\lVert D^2_\pw (\Psi-\Psi_h)\rVert_{L^2(\Omega)}$.

\begin{remark}
    The following inverse inequality is used, e.g., in the
    proof of Theorem~\ref{t:a_priori_nodal_interpol} below.
    Let the function $v_h$ be piecewise polynomial with respect to the mesh
    $\mathcal T$. A standard scaling argument shows the
    discrete inequality
        \begin{align}\label{e:inverse_inequality}
            \lVert v_h\rVert_{L^q(\Omega)}
            \lesssim \left\lVert h_\tri^{(2-q)/q} v_h\right\rVert_{L^2(\Omega)}
            \quad\text{for any }
            2\leq q <\infty,
        \end{align}
    where the constant hidden in the notation $\lesssim$ depends on the
    polynomial degree.
\end{remark}

\begin{theorem}[$H^1$ error estimate for nodal interpolant]
    \label{t:a_priori_nodal_interpol}
    For sufficiently small $h$, the discrete solution $\Psi_h$ 
    satisfies the error estimate 
    \begin{align*}
        \lVert\nabla(\Psi-I_h\Psi_h)\rVert_{L^2(\Omega)}
        \lesssim h.
    \end{align*}
\end{theorem}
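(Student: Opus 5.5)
We need $\|\nabla(\Psi-I_h\Psi_h)\|\lesssim h$. We split by the triangle inequality through the nodal interpolant of the exact solution:
\begin{align*}
\lVert\nabla(\Psi-I_h\Psi_h)\rVert_{L^2(\Omega)}
\leq \lVert\nabla(\Psi-I_h\Psi)\rVert_{L^2(\Omega)}
   + \lVert\nabla I_h(\Psi-\Psi_h)\rVert_{L^2(\Omega)} .
\end{align*}
The first term is bounded by $h\lVert D^2\Psi\rVert_{L^2(\Omega)}\lesssim h\lVert f\rVert_{L^2(\Omega)}$ via \eqref{e:nodal_interpolation_approx} and \eqref{e:vK_stab}.

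For the second term we insert the Morley interpolant $I_\mathcal{M}\Psi$. Because $I_\mathcal{M}$ preserves nodal values, $I_hI_\mathcal{M}\Psi=I_h\Psi$. Hence $I_h(\Psi-\Psi_h)=I_h(I_\mathcal{M}\Psi-\Psi_h)$, and this is the nodal interpolant of the Morley function $e_h:=I_\mathcal{M}\Psi-\Psi_h\in\mathcal{M}_0(\tri;\R^2)$. We then aim for a discrete stability estimate $\lVert\nabla I_h e_h\rVert_{L^2(\Omega)}\lesssim\lVert\nabla_\pw e_h\rVert_{L^2(\Omega)}+h\lVert D^2_\pw e_h\rVert_{L^2(\Omega)}$. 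It follows elementwise from \eqref{e:nodal_interpolation_approx} applied on each $T$, since $e_h|_T$ is a quadratic polynomial and $I_h$ acts locally on $T$ through the vertex values. Combining this with $\lVert D^2_\pw e_h\rVert\lesssim \lVert D^2_\pw(\Psi-\Psi_h)\rVert+\lVert D^2_\pw(\Psi-I_\mathcal{M}\Psi)\rVert\lesssim h^\alpha$ from Theorem~\ref{t:vK_apriori} gives $h^{1+\alpha}$ for the second contribution. What remains is to show $\lVert\nabla_\pw e_h\rVert_{L^2(\Omega)}\lesssim h$, i.e.\ a broken $H^1$ error estimate $\lVert\nabla_\pw(\Psi-\Psi_h)\rVert\lesssim h$ (using \eqref{e:estimate_Morley_interpolation1} for $\Psi-I_\mathcal{M}\Psi$).

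\textbf{Main obstacle: the broken $H^1$ estimate.} This step needs an Aubin--Nitsche duality argument for the linearized operator. Set $e:=\Psi-\Psi_h$. We would pose the dual problem: find $Z\in H^2_0(\Omega;\R^2)$ with $DN(\Psi;\Phi,Z)=(\nabla E_h e,\nabla\Phi)$ for all $\Phi$, where $E_h=J$ is the enriching operator. Nonsingularity \eqref{e:vK_inf-sup} together with elliptic regularity gives $\lVert Z\rVert_{H^{2+\alpha}}\lesssim\lVert \nabla Je\rVert$; here $H^{-1}$ data suffices for $\alpha\le1$. Testing with $Je$ and subtracting the discrete equation tested with $I_\mathcal{M}Z$ leaves three kinds of terms:
\begin{itemize}
\item consistency terms from $A_\pw$ (nonconformity), bounded via \eqref{e:integral_mean_property} and \eqref{e:enriching_stab_approx} by $h^\alpha\cdot h^\alpha$;
\item terms $B_\pw-B_h$, where $I_h$ replaces the identity, bounded by \eqref{e:nodal_interpolation_approx} with Lemma~\ref{l:vK_cont_B} ($L^4$ bounds for $D^2\Psi$, inverse inequality \eqref{e:inverse_inequality}) by $h\,\lVert Z\rVert_{H^{2+\alpha}}$;
\item the quadratic nonlinear remainder $B(e,e,\cdot)$ from \eqref{e:D2N}, of size $h^{2\alpha}$.
\end{itemize}
Since $\alpha>1/2$ we get $2\alpha>1$, so all terms are $O(h)$. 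Finally, $\lVert\nabla_\pw(e-Je)\rVert\lesssim h\lVert D^2_\pw e\rVert$ by \eqref{e:enriching_stab_approx} transfers the bound from $Je$ to $e$. The delicate point is bounding the $I_h$-consistency term by $h$ rather than by $h^\alpha$. It requires placing $D^2\Psi\in L^4$ against $\nabla(\psi-I_h\psi)$ in $L^2$ and $\nabla Z\in L^\infty$, using \eqref{e:sobolev_infty}.
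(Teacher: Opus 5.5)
Your overall route is the paper's: a triangle inequality through $I_\mathcal{M}\Psi$, followed by an Aubin--Nitsche argument for the linearized operator. You pass via $\lVert\nabla_\pw(\Psi-\Psi_h)\rVert_{L^2(\Omega)}$ and $\Psi-J\Psi_h$; the paper passes via $\rho_h=\Psi_h-I_\mathcal{M}\Psi$ and $\lVert\nabla J\rho_h\rVert_{L^2(\Omega)}$. In both, the genuinely new contribution is the mismatch between $B_h$ and $B_\pw$, which is bounded by $h$ using $\nabla\Psi,\nabla Z\in L^\infty$ and an $L^4$--$L^4$ pairing. Your reduction steps are fine. Since $J$ only acts on $\mathcal{M}_0(\tri)$, read $Je$ as $\Psi-J\Psi_h$; then $\lVert\nabla_\pw(e-Je)\rVert_{L^2(\Omega)}\lesssim h\lVert D^2_\pw\Psi_h\rVert_{L^2(\Omega)}\lesssim h$, which suffices.

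The gap is that your list of ``three kinds of terms'' in the duality step is incomplete. Comparing $-N(J\Psi_h;Z)$ with the discrete equation tested by $I_\mathcal{M}Z$ also produces the nonconformity error of the trilinear form,
\[
B_\pw(\Psi_h,\Psi_h,I_\mathcal{M}Z)-B(J\Psi_h,J\Psi_h,Z),
\]
which contains $B_\pw(\Psi_h-J\Psi_h,\Psi_h,Z)$. Here $D^2_\pw(\Psi_h-J\Psi_h)$ occupies the Hessian slot. Lemma~\ref{l:vK_cont_B} then only gives $\lVert D^2_\pw(\Psi_h-J\Psi_h)\rVert\,\lVert\nabla_\pw\Psi_h\rVert\,\lVert\nabla Z\rVert_{L^\infty}\lesssim h^\alpha\lVert Z\rVert_{H^{2+\alpha}(\Omega)}$, which is not $O(h)$ when $\alpha<1$. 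None of your three categories covers this term.

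Closing it needs an extra idea. Integrate by parts elementwise, using that the rows of $\cof(D^2 w)$ are divergence free, so that one derivative moves off $w=\Psi_h-J\Psi_h$. The volume term then carries $\lVert\nabla_\pw w\rVert\lesssim h$. The resulting face terms contain $\partial_{\tau_F}Z$ times jumps of products of first derivatives of $w$ and $\Psi_h$; control them through the vanishing face means of $[\nabla\Psi_h]_F$. This is exactly the part the paper does not redo but imports from \cite[Theorem~4.7]{MallikNataraj2016}, isolating only the extra term $B_\pw(\Psi_h,I_h\Psi_h,I_h\zeta)-B_\pw(\Psi_h,\Psi_h,I_\mathcal{M}\zeta)$.

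A smaller point of the same kind concerns your $A_\pw$ consistency bound. The enriching operator $J$ of Section~\ref{s:prelim} is not assumed to satisfy $\Pi_0 D^2_\pw(v_h-Jv_h)=0$. Hence $A_\pw(\Psi_h-J\Psi_h,Z)\lesssim h^{2\alpha}\lVert Z\rVert_{H^{2+\alpha}(\Omega)}$ does not follow from \eqref{e:integral_mean_property} and \eqref{e:enriching_stab_approx} alone. It again needs the zero-mean property of the face jumps of $\nabla\Psi_h$, together with an $H^2$/$H^3$ interpolation argument.
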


\begin{proof}
    The proof follows similar as in \cite[Theorem~4.7]{MallikNataraj2016}
    and only the differences are given here.
    
    The triangle inequality 
    leads for $\rho_h = \Psi_h-I_\mathcal{M}\Psi$ to 
    \begin{align*}
        \lVert \nabla(\Psi-I_h\Psi_h)\rVert_{L^2(\Omega)}
        &\leq \lVert\nabla(\Psi-I_hI_\mathcal{M}\Psi)\rVert_{L^2(\Omega)}
          + \lVert \nabla (I_h\rho_h- \rho_h)\rVert_{L^2(\Omega)}\\
        &\qquad\qquad \qquad\quad
          + \lVert \nabla (\rho_h-J\rho_h)\rVert_{L^2(\Omega)}
          + \lVert \nabla J\rho_h\rVert_{L^2(\Omega)}.
    \end{align*}
    The approximation and stability properties
    of the nodal interpolation operator from \eqref{e:nodal_interpolation_approx},
    the Morley interpolation operator from \eqref{e:estimate_Morley_interpolation1},
    and the enriching operator from \eqref{e:enriching_stab_approx}
    together with the stability of the continuous and discrete 
    problem bound the first three terms on the right-hand side 
    by $h$.
    The last term can be bounded as in \cite[Theorem~4.7]{MallikNataraj2016}
    by $h$ plus the additional term
    \begin{align*}
        B_\pw(\Psi_h,I_h\Psi_h,I_h\zeta) 
        - B_\pw(\Psi_h,\Psi_h,I_\mathcal{M}\zeta)
    \end{align*}
    where $\zeta\in H^{2+\alpha}(\Omega)$ is the solution of a dual problem.
    Since $I_h\zeta=I_h I_\mathcal{M}\zeta$, this term can be estimated via
    \begin{align*}
        &B_\pw(\Psi_h,I_h\Psi_h,I_h\zeta) 
        - B_\pw(\Psi_h,\Psi_h,I_\mathcal{M}\zeta)\\
        &\qquad 
        = B_\pw(\Psi_h,I_h\Psi_h-\Psi_h,I_h\zeta-\zeta) 
          + B_\pw(\Psi_h,I_h\Psi_h-\Psi_h,\zeta)\\
        &\qquad\qquad\qquad 
          + B_\pw(\Psi_h,\Psi_h-\Psi,I_hI_\mathcal{M}\zeta - I_\mathcal{M}\zeta)
          + B_\pw(\Psi_h,\Psi,I_hI_\mathcal{M}\zeta - I_\mathcal{M}\zeta)\\
        &\qquad 
        \lesssim \lVert\nabla(\Psi_h-I_h\Psi_h)\rVert_{L^4(\Omega)}
             \lVert \nabla (\zeta-I_h\zeta)\rVert_{L^4(\Omega)}\\
        &\qquad\qquad\qquad 
            + \lVert \nabla(\Psi_h - I_h\Psi_h)\rVert_{L^2(\Omega)} 
              \lVert \nabla \zeta\rVert_{L^\infty(\Omega)} \\
        &\qquad\qquad\qquad 
            + \lVert \nabla(\Psi - \Psi_h)\rVert_{L^4(\Omega)} 
              \lVert \nabla 
                 (I_\mathcal{M}\zeta-I_h I_\mathcal{M}\zeta)\rVert_{L^4(\Omega)} \\
        &\qquad\qquad\qquad 
            +\lVert \nabla \Psi\rVert_{L^\infty(\Omega)} 
              \lVert \nabla (I_\mathcal{M}\zeta-I_h I_\mathcal{M}\zeta)\rVert_{L^2(\Omega)},
    \end{align*}
    where the stability of the continuous and discrete system
    was employed in the last step.
    The inverse inequality \eqref{e:inverse_inequality},
    the approximation properties of the nodal interpolation 
    operator, the embeddings \eqref{e:embedding_L4} 
    and \eqref{e:embedding}, and the a~priori error
    estimate from Theorem~\ref{t:vK_apriori}
    yield
    \begin{align*}
        \lVert\nabla(\Psi_h-I_h\Psi_h)\rVert_{L^4(\Omega)}
        +\lVert \nabla 
                 (I_\mathcal{M}\zeta-I_h I_\mathcal{M}\zeta)\rVert_{L^4(\Omega)}
        &\lesssim h^{1/2},\\
        \lVert \nabla (\zeta-I_h\zeta)\rVert_{L^4(\Omega)}
        &\lesssim h \lVert D^2\zeta\rVert_{L^4(\Omega)}
        \lesssim h\lVert\zeta\rVert_{H^{2+\alpha}(\Omega)},\\
        \lVert \nabla_\pw(\Psi - \Psi_h)\rVert_{L^4(\Omega)} 
        &\lesssim \lVert D^2_\pw(\Psi - \Psi_h)\rVert_{L^2(\Omega)}
         \lesssim h^\alpha.
    \end{align*}
    Since $\alpha\geq 1/2$,
    this together with the approximation properties 
    \eqref{e:nodal_interpolation_approx}
    and \eqref{e:sobolev_infty} 
    eventually bounds the additional terms by $h$.
\end{proof}

\subsection{Error estimator} \label{ss:vK_errorest}

Define the edge contributions
\begin{align}\label{e:A1A2}
 \begin{aligned}
A_1 &:= \cof(D_\pw^2 \psi_{h,1})\nabla I_h\psi_{h,2}
              + \cof(D_\pw^2 \psi_{h,2})\nabla I_h\psi_{h,1}
\\
\text{and }
A_2&:=\cof(D_\pw^2 \psi_{h,1})\nabla I_h\psi_{h,1}
.
\end{aligned}
\end{align}
We define the error estimator contributions
\begin{align*}
    \mu_\nc(T) &:= \sqrt{\sum_{F\in\mathcal{F}(T)} h_T^{-1}
            \lVert [\nabla_\pw \Psi_h]_F\rVert_{L^2(F)}^2},\\
    \mu_{I_h}(T) &:= \lVert \nabla_\pw (\Psi_h-I_h\Psi_h)\rVert_{L^2(T)},\\
    \eta_f(T) &:= \lVert h_T^2 f\rVert_{L^2(T)},
    \\
    \eta_j(T) &:= \sqrt{\sum_{F\in\mathcal{F}(T)\cap\mathcal{F}(\Omega)}
            h_T^{3} \lVert [A_j]_F\nu_F\rVert_{L^2(F)}^2}
            \quad\text{for }j=1,2 ,
\end{align*}
and the global error estimator by 
\begin{align*}
    \eta:=\left(\sum_{T\in\tri}\left(\mu_\nc^2(T)+\mu_{I_h}^2(T) 
      +\eta_f^2(T) + \eta_1^2(T) + \eta_2^2(T)\right)\right)^{1/2}.
\end{align*}

The following theorem proves the reliability of the error 
estimator, while the efficiency is proved in 
Theorem~\ref{t:vK_efficiency} below.

\begin{theorem}[reliability]\label{t:vK_reliability}
    If $h_\mathrm{max}$ is sufficiently small, then the error estimator is 
    reliable in the sense that
    \begin{align*}
        \lVert D_\pw^2 (\Psi-\Psi_h)\rVert_{L^2(\Omega)}
        + \lVert \nabla (\Psi-I_h\Psi_h)\rVert_{L^2(\Omega)}
        \lesssim \eta.
    \end{align*}
\end{theorem}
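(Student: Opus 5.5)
The plan is the standard route for nonlinear nonconforming problems: the inf-sup condition \eqref{e:vK_inf-sup} of the linearisation at the nonsingular solution $\Psi$, combined with the a~priori estimate of Theorem~\ref{t:vK_apriori}, which makes the quadratic remainder small. I would first split off the nonconformity. With the enriching operator $J$ from \eqref{e:enriching_stab_approx}, the estimate \eqref{e:enriching_apost} gives
$\lVert D^2_\pw(\Psi_h-J\Psi_h)\rVert_{L^2(\Omega)}\lesssim\mu_\nc$.
The same operator controls $\lVert\nabla_\pw(\Psi_h-J\Psi_h)\rVert_{L^2(\Omega)}$ by $h\mu_\nc$. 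It therefore suffices to bound $e:=\Psi-J\Psi_h\in H^2_0(\Omega;\R^2)$ in the $H^2$ norm. Once that is done, the term $\lVert\nabla(\Psi-I_h\Psi_h)\rVert_{L^2(\Omega)}$ is handled by the triangle inequality: $\lVert\nabla(\Psi-I_h\Psi_h)\rVert\le\lVert\nabla_\pw(\Psi-\Psi_h)\rVert+\mu_{I_h}$, together with a Friedrichs-type bound of the piecewise gradient by $\lVert D^2_\pw(\Psi-\Psi_h)\rVert+\mu_\nc$.

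For $e$, a Taylor expansion exploits that $N$ is quadratic, so by \eqref{e:D2N} it is exact:
$DN(\Psi;e,\Theta)=N(\Psi;\Theta)-N(J\Psi_h;\Theta)+B(e,e,\Theta)=-N(J\Psi_h;\Theta)+B(e,e,\Theta)$.
By \eqref{e:vK_cont_B_222}, $B(e,e,\Theta)\lesssim\lVert D^2e\rVert^2\lVert D^2\Theta\rVert$. The a~priori bound and \eqref{e:enriching_stab_approx} give $\lVert D^2 e\rVert\lesssim h^\alpha+\mu_\nc$, which is small, so the quadratic term is absorbed by \eqref{e:vK_inf-sup} once $h_\mathrm{max}$ is small. (Here I would take $\mu_\nc\lesssim\lVert D^2_\pw(\Psi-\Psi_h)\rVert\lesssim h^\alpha$ from \eqref{e:enriching_apost}.) It remains to bound the residual $N(J\Psi_h;\Theta)$ for $\lVert D^2\Theta\rVert=1$.

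To treat the residual, insert the discrete equation \eqref{e:karmandiscrete} with $\Phi_h=I_\mathrm{qi}\Theta$ (or $I_\mathcal M\Theta$), writing $\Theta=e_\Theta+I_\mathrm{qi}\Theta$. The linear part is handled as in Theorem~\ref{t:sp_reliability} with $\varepsilon=1$, using \eqref{e:quasi_interpolation}, \eqref{e:nodal_interpolation_approx} and \eqref{e:enriching_stab_approx}. The terms $A(J\Psi_h-\Psi_h,\Theta)$ give $\mu_\nc$. The term $a_\pw(\Psi_h,e_\Theta)$ gives zero after piecewise integration by parts, by the integral mean property of Morley functions, or else the Hessian jump terms that are controlled by $\mu_\nc$ via an inverse inequality. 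The load term $F(e_\Theta)$ gives $\eta_f$.

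The nonlinear part is the main obstacle. I would rewrite $B(J\Psi_h,J\Psi_h,\Theta)-B_h(\Psi_h,\Psi_h,I_\mathrm{qi}\Theta)$ telescopically. \emph{Step 1:} replace $J\Psi_h$ by $\Psi_h$ in the first slot, which costs $\mu_\nc$ by \eqref{e:vK_cont_B_22q}. \emph{Step 2:} replace the second slot $J\Psi_h$ by $I_h\Psi_h$. \emph{Step 3:} replace the third slot $\Theta$ by $I_hI_\mathrm{qi}\Theta$ in the remaining term $B_\pw(\Psi_h,I_h\Psi_h,\Theta-I_hI_\mathrm{qi}\Theta)$. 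For Steps 1 and 2 the differences are bounded with \eqref{e:vK_cont_B_2p2}. The $L^p$ norm of $\nabla(J\Psi_h-I_h\Psi_h)$ is split into $\nabla(J\Psi_h-\Psi_h)$ and $\nabla(\Psi_h-I_h\Psi_h)$. Norms in $L^4$ are converted through \eqref{e:inverse_inequality} or via $W^{1,q}$ embeddings of $\Theta$, giving $\mu_\nc+\mu_{I_h}$ times bounded factors; the discrete stability $\lVert D^2_\pw\Psi_h\rVert\lesssim1$ follows from the a~priori theorem. For Step 3, since $\cof(D^2_\pw\psi_{h,i})\nabla I_h\psi_{h,j}$ is piecewise constant and $\Theta-I_hI_\mathrm{qi}\Theta\in H^1_0$, piecewise integration by parts (the Piola identity is trivial for constants) leaves only edge terms $([A_j]_F\nu_F,\Theta-I_hI_\mathrm{qi}\Theta)_{L^2(F)}$. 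These are bounded by $h_F^{3/2}\lVert D^2\Theta\rVert_{L^2(\Omega_F)}$ as in Step~4 of Theorem~\ref{t:sp_reliability}, producing $\eta_1$ and $\eta_2$. The delicate point I anticipate is ensuring that Step 2 and Step 3 lose no powers of $h$ in the $L^4$ bounds; I would rely on $\Theta\in H^2\hookrightarrow W^{1,q}$ to put the $L^2$ norm on the discrete difference.
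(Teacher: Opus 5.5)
\textbf{Gap in Step 2.} Your overall architecture matches the paper's: the inf-sup condition at the nonsingular $\Psi$, the exact Taylor expansion of the quadratic $N$, absorption of $B(e,e,\Theta)$ via the a~priori bound, testing the discrete equation with $I_\mathcal{M}\Theta$, and integrating $B_\pw(\Psi_h,I_h\Psi_h,\Theta-I_h\Theta)$ by parts to produce $\eta_1,\eta_2$. The genuine gap is your Step~2, the term $B_\pw(\Psi_h,J\Psi_h-I_h\Psi_h,\Theta)$.

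Here $\cof(D^2_\pw\Psi_h)$ is uniformly bounded only in $L^2$. Also, $\nabla\Theta$ with $\lVert D^2\Theta\rVert_{L^2(\Omega)}=1$ lies in every $L^q$ with $q<\infty$, but not in $L^\infty$. Hölder's inequality therefore forces $\nabla(J\Psi_h-I_h\Psi_h)$ into some $L^p$ with $p>2$. However, $\lVert\nabla(J\Psi_h-I_h\Psi_h)\rVert_{L^p(\Omega)}$ is \emph{not} bounded by $\mu_\nc+\mu_{I_h}$ times a bounded factor: the inverse inequality \eqref{e:inverse_inequality} introduces the weight $h_\tri^{(2-p)/p}$, which has a negative exponent. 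Your suggested remedy, putting the $L^2$ norm on the discrete difference and using $\Theta\in W^{1,q}$, would require $\lVert D^2_\pw\Psi_h\rVert_{L^r(\Omega)}\lesssim 1$ for some $r>2$. That bound is not available: the a~priori theorem gives only $L^2$ stability, and on graded adaptive meshes inverse estimates cannot upgrade it. Reordering Steps 1 and 2 does not help, because $D^2 J\Psi_h$ is likewise only uniformly $L^2$.

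\textbf{The missing idea.} Trade $\Psi_h$ for the exact solution in the first slot:
$B_\pw(\Psi_h,I_h\Psi_h-J\Psi_h,\Theta)=B_\pw(\Psi_h-\Psi,I_h\Psi_h-J\Psi_h,\Theta)+B(\Psi,I_h\Psi_h-J\Psi_h,\Theta)$.

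In the second term, the regularity \eqref{e:vK_stab} gives $D^2\Psi\in L^4$. Hence \eqref{e:vK_cont_B_2alpha2q} lets you place the $L^2$ norm on $\nabla(J\Psi_h-I_h\Psi_h)$, and this is bounded by $\lVert\nabla_\pw(\Psi_h-J\Psi_h)\rVert_{L^2(\Omega)}+\mu_{I_h}\lesssim\eta$.

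In the first term, use \eqref{e:vK_cont_B_2p2} together with $I_h\Psi_h=I_hJ\Psi_h$ from \eqref{e:prop_enriching_nodal}. Then $\lVert\nabla(J\Psi_h-I_hJ\Psi_h)\rVert_{L^p(\Omega)}\lesssim\lVert h_\tri^{2/p}D^2J\Psi_h\rVert_{L^2(\Omega)}\lesssim h_{\max}^{2/p}$. This is a positive power, so the factor is small, but it is not proportional to $\eta$. The first term is therefore of the form $h_{\max}^{2/p}\lVert D^2_\pw(\Psi-\Psi_h)\rVert_{L^2(\Omega)}$. It must be absorbed into the left-hand side at the very end, after the triangle inequality with $\mu_\nc$. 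This is a second place, besides the quadratic remainder, where smallness of $h_{\max}$ is genuinely needed.

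\textbf{Choice of interpolant.} Commit to $I_\mathcal{M}\Theta$ rather than $I_\mathrm{qi}\Theta$. With $I_\mathrm{qi}$, the term $A_\pw(\Psi_h,\Theta-I_\mathrm{qi}\Theta)$ leaves the normal--normal Hessian jumps $[D^2_\pw\psi_{h,j}\nu_F]_F\cdot\nu_F$. These are not controlled by $\mu_\nc$ through an inverse inequality (only the tangential component is), and this estimator contains no such term. With $I_\mathcal{M}$, the integral mean property \eqref{e:integral_mean_property} makes the term vanish exactly.
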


\begin{proof}
    Let $J$ denote the enriching operator from Section~\ref{s:prelim}.
    The inf-sup condition \eqref{e:vK_inf-sup} of the linearized 
    problem guarantees the existence 
    of $\Theta\in H^2_0(\Omega;\R^2)$ with 
    $\lVert D^2 \Theta \rVert_{L^2(\Omega)}=1$ and
    \begin{align*}
        \lVert D^2(\Psi-J\Psi_h)\rVert_{L^2(\Omega)}
        \lesssim DN(\Psi;\Psi-J\Psi_h,\Theta).
    \end{align*}
    Since $N$ is quadratic and $\Psi$ is the exact solution
    with $N(\Psi,\cdot)=0$, 
    the following Taylor expansion around $\Psi$ 
    is exact,
    see also \cite{CarstensenMallikNataraj2020},
    \begin{align*}
        N(J\Psi_h;\Theta)
        = DN(\Psi;J\Psi_h-\Psi;\Theta) 
          + \frac{1}{2} D^2N(\Psi;J\Psi_h-\Psi,J\Psi_h-\Psi,\Theta).
    \end{align*}
    Combining the two foregoing formulas with the representation
    \eqref{e:D2N} results in
    \begin{align*}
        \lVert D^2(\Psi-J\Psi_h)\rVert_{L^2(\Omega)}
        \lesssim -N(J\Psi_h;\Theta) + B(J\Psi_h-\Psi,J\Psi_h-\Psi,\Theta).
    \end{align*}
    We apply estimate \eqref{e:vK_cont_B_222} to the right-hand side
    of this estimate, which leads to
    \begin{align*}
        \lVert D^2(\Psi-J\Psi_h)\rVert_{L^2(\Omega)}
        \lesssim -N(J\Psi_h;\Theta) + \lVert D^2( J\Psi_h-\Psi)\rVert_{L^2(\Omega)}^2.
    \end{align*}
    From the a~priori error analysis it is known that 
    $J\Psi_h-\Psi\to 0$ for decreasing mesh size, and since the error 
    term appears quadratically on the right-hand side, eventually we
    deduce under the assumption $h_{\max}\ll1$ that
    \begin{align*}
        \lVert D^2(\Psi-J\Psi_h)\rVert_{L^2(\Omega)}
        \lesssim -N(J\Psi_h;\Theta) .
    \end{align*}
    
    The definition of $N$ and the 
    discrete equation \eqref{e:karmandiscrete} imply for the Morley interpolation
    operator $I_\mathcal{M}$ from Section~\ref{s:prelim} that
    \begin{align*}
        -N(J\Psi_h;\Theta) 
        &= -A(J\Psi_h,\Theta) - B(J\Psi_h,J\Psi_h,\Theta) +F(\Theta)\\
        &=  A_\pw(\Psi_h,I_\mathcal{M}\Theta) - A(J\Psi_h,\Theta)
            +F(\Theta-I_\mathcal{M}\Theta)\\
        &\qquad 
        + B_h(\Psi_h,\Psi_h,I_\mathcal{M}\Theta)- B(J\Psi_h,J\Psi_h,\Theta)
           .
    \end{align*}
    The integral mean property of the Morley interpolation 
    operator \eqref{e:integral_mean_property}, the normalization 
    $\lVert D^2\Theta\rVert_{L^2(\Omega)}=1$, and \eqref{e:enriching_apost} imply
    \begin{align*}
        A_\pw(\Psi_h,I_\mathcal{M}\Theta)-A(J\Psi_h,\Theta)
        &=  A_\pw(\Psi_h,\Theta) - A(J\Psi_h,\Theta)
        = A_\pw(\Psi_h-J\Psi_h,\Theta) \\
        &\leq \lVert D^2_\pw(\Psi_h-J\Psi_h)\rVert_{L^2(\Omega)}
        \lesssim \eta.
    \end{align*}
    Furthermore, the approximation properties of 
    $I_\mathcal{M}$ from \eqref{e:estimate_Morley_interpolation1}
    and $\lVert D^2\Theta\rVert_{L^2(\Omega)}=1$ imply 
    \begin{align*}
        F(\Theta -I_\mathcal{M}\Theta)
        = \int_\Omega f (\theta_1-I_\mathcal{M}\theta_1)\,dx
        \lesssim \lVert h_\tri^2 f\rVert_{L^2(\Omega)}
        \lesssim \eta.
    \end{align*}
    Combining the foregoing estimates results in
    \begin{align}\label{e:Nerrors_1}
          \lVert D^2(\Psi-J\Psi_h)\rVert_{L^2(\Omega)}
      \lesssim
       \eta 
        +
        \left |B_h(\Psi_h,\Psi_h,I_\mathcal{M}\Theta)- B(J\Psi_h,J\Psi_h,\Theta)
        \right|
           .
    \end{align}
    It therefore remains to estimate the terms on the right-hand side 
    of \eqref{e:Nerrors_1} involving $B$ and $B_h$.
    The definition of $B_h$ and 
    $I_h I_\mathcal{M}\Theta=I_h \Theta$ (see Section~\ref{s:prelim}) imply 
    \begin{equation}\label{e:vK_proof_reliability_1}
    \begin{aligned}
        &B_h(\Psi_h,\Psi_h,I_\mathcal{M}\Theta)
         - B(J\Psi_h,J\Psi_h,\Theta)
           \\
        &\qquad  
        =  B_\pw(\Psi_h,I_h\Psi_h,I_h\Theta)
          - B(J\Psi_h,J\Psi_h,\Theta)\\
        &\qquad  
        = B_\pw(\Psi_h-J\Psi_h,J\Psi_h,\Theta) 
          + B_\pw(\Psi_h-\Psi,I_h\Psi_h-J\Psi_h,\Theta)\\
        &\qquad \qquad 
          + B(\Psi,I_h\Psi_h-J\Psi_h,\Theta)
          - B_\pw(\Psi_h,I_h\Psi_h,\Theta-I_h\Theta).
    \end{aligned}
    \end{equation}
    Since $\lVert D^2\Theta\rVert_{L^2(\Omega)}=1$, 
    the inequality \eqref{e:vK_cont_B_222} implies for 
    the first term on the right-hand side
    of \eqref{e:vK_proof_reliability_1} that 
    \begin{align*}
        B_\pw(\Psi_h-J\Psi_h,J\Psi_h,\Theta)
        \lesssim \lVert D^2_\pw(\Psi_h-J\Psi_h)\rVert_{L^2(\Omega)}
              \,\lVert D^2 J\Psi_h\rVert_{L^2(\Omega)}.
    \end{align*}
    The stability properties of the enrichment operator and the 
    stability of the discrete problem lead to
    $\lVert D^2 J\Psi_h\rVert_{L^2(\Omega)}\lesssim 1$, and, therefore,
    \eqref{e:enriching_apost} implies
    \begin{align*}
        B_\pw(\Psi_h-J\Psi_h,J\Psi_h,\Theta) 
        \lesssim \sqrt{\sum_{T\in\tri}\mu_\nc(T)^2}
        \leq\eta
        .
    \end{align*}

    For the second term of the right-hand side of 
    \eqref{e:vK_proof_reliability_1} we use \eqref{e:vK_cont_B_2p2}
    to conclude that
    \begin{align*}
        &B_\pw(\Psi_h-\Psi,I_h\Psi_h-J\Psi_h,\Theta)\\
        &\qquad \qquad\qquad
        \leq C(p) \lVert D_\pw^2 (\Psi-\Psi_h)\rVert_{L^2(\Omega)}
           \lVert \nabla (J\Psi_h-I_h\Psi_h)\rVert_{L^p(\Omega)}
    \end{align*}
    for some $2<p\leq 4$.
    The inverse inequality \eqref{e:inverse_inequality} 
    and the approximation properties of $I_h$ from 
    \eqref{e:nodal_interpolation_approx} together with
    \eqref{e:prop_enriching_nodal} and the stability of the 
    enriching operator \eqref{e:enriching_stab_approx} imply
    for $\gamma:={1+(2-p)/p}\geq 1/2$ that
    \begin{align*}
        &\lVert \nabla (J\Psi_h-I_h\Psi_h)\rVert_{L^p(\Omega)}
        \lesssim
            \lVert h_\tri^{(2-p)/p}\nabla (J\Psi_h-I_hJ\Psi_h)\rVert_{L^2(\Omega)}\\
        &\qquad\qquad
        \lesssim
            \lVert h_\tri^{1+(2-p)/p}D^2 J\Psi_h \rVert_{L^2(\Omega)}
        \lesssim \lVert h_\tri^\gamma D^2_\pw \Psi_h \rVert_{L^2(\Omega)}
        \lesssim h_{\max}^\gamma
        .
    \end{align*}

    The stability of the continuous problem \eqref{e:vK_stab} 
    and \eqref{e:vK_cont_B_2alpha2q} together with 
    the embedding \eqref{e:embedding}
    lead for the third term on the right-hand side 
    of \eqref{e:vK_proof_reliability_1} to 
    \begin{align*}
        B(\Psi,I_h\Psi_h-J\Psi_h,\Theta)
        \lesssim \lVert \Psi\rVert_{H^{2+\alpha}(\Omega)} 
              \,\rVert \nabla (J\Psi_h-I_h\Psi_h)\rVert_{L^2(\Omega)}.
    \end{align*}
    A triangle and a discrete Poincar\'e inequality 
    \cite{Brenner2003} (note that the integral mean of 
    $[\nabla\Psi_h]_F$ vanishes on each face) then imply 
    \begin{align*}
        \lVert \nabla (J\Psi_h-I_h\Psi_h)\rVert_{L^2(\Omega)}
        &\leq \lVert\nabla_\pw (\Psi_h-J\Psi_h)\rVert_{L^2(\Omega)}
            + \lVert\nabla_\pw (\Psi_h-I_h\Psi_h)\rVert_{L^2(\Omega)}\\
        &\leq \lVert D_\pw^2 (\Psi_h-J\Psi_h)\rVert_{L^2(\Omega)}
            + \eta
        \lesssim \eta.
    \end{align*}
    
    Since $\cof(D_\pw^2\psi_{h,j})\nabla I_h\psi_{h,k}$ is piecewise 
    constant for $j,k\in\{1,2\}$,
    a piecewise integration by parts leads for the fourth term 
    in \eqref{e:vK_proof_reliability_1}
    to 
    \begin{align*}
        &-B_\pw(\Psi_h,I_h\Psi_h,\Theta-I_h\Theta)\\
        &\;
        = \frac12 \sum_{F\in\mathcal{F}(\Omega)} 
           \bigg(
               \int_F (\theta_2-I_h\theta_2)
              \,[\cof(D_\pw^2\psi_{h,1})\nabla I_h\psi_{h,1}]_F\cdot \nu_F\,ds\\
        &\quad
              -\int_F (\theta_1-I_h\theta_1)
              \,[\cof(D_\pw^2\psi_{h,1})\nabla I_h\psi_{h,2}
                 + \cof(D_\pw^2\psi_{h,2})\nabla I_h\psi_{h,1}]_F\cdot \nu_F\,ds
              \bigg).
    \end{align*}
    A Cauchy and a trace inequality and the approximation properties 
    of the nodal interpolation operator \eqref{e:nodal_interpolation_approx}
    eventually bound the right-hand side by $\eta_1+\eta_2$.

    The combination of the previous inequalities for the terms on the 
    right-hand side of \eqref{e:vK_proof_reliability_1} with 
    \eqref{e:Nerrors_1} leads to 
    \begin{align*}
        &\lVert D^2(\Psi-J\Psi_h)\rVert_{L^2(\Omega)}
        \lesssim \eta 
            + h_{\max}^\gamma\lVert D_\pw^2(\Psi-\Psi_h)\rVert_{L^2(\Omega)}
              .
    \end{align*}
    The triangle inequality and \eqref{e:enriching_apost} 
    therefore lead to 
    \begin{align*}
        \lVert D^2_\pw(\Psi-\Psi_h)\rVert_{L^2(\Omega)}
        \lesssim \eta 
            + h_{\max}^\gamma\lVert D_\pw^2(\Psi-\Psi_h)\rVert_{L^2(\Omega)}.
    \end{align*}
    For sufficiently small $h$, the last term
    on the right-hand side can be absorbed.
\end{proof}

Define the oscillations of $f$ by
\begin{align*}
    \osc(f,\tri(\omega_T))
    :=\lVert h_\tri^2(f-\Pi_0 f)\rVert_{L^2(\omega_T)}.
\end{align*}

\begin{theorem}[efficiency]\label{t:vK_efficiency}
    The error estimator is efficient in the sense that 
    \begin{align*}
        \mu_\nc(T) &\lesssim \min_{\Phi\in H^2_0(\Omega)}
                \lVert D_\pw^2(\Psi_h-\Phi)\rVert_{L^2(\Omega_T)}
                \leq \lVert D_\pw^2(\Psi-\Psi_h)\rVert_{L^2(\Omega_T)},\\
        \sum_{T\in\tri}\mu_{I_h}^2(T)
          &\lesssim \lVert D_\pw^2(\Psi-\Psi_h)\rVert_{L^2(\Omega)}
              + \lVert \nabla (\Psi-I_h\Psi_h)\rVert_{L^2(\Omega)},\\
        \eta_f(T)&\lesssim \lVert D_\pw^2(\Psi-\Psi_h)\rVert_{L^2(T)} 
              + \|\nabla(\Psi-I_h\Psi_h)\|_{L^2(T)}+  \osc(f,\{T\}),\\
        \eta_j(T)&\lesssim \lVert D_\pw^2(\Psi-\Psi_h)\rVert_{L^2(\omega_T)}
              + \|\nabla (\Psi-I_h\Psi_h)\|_{L^2(\omega_T)}+  \osc(f,\tri(\omega_T)),
    \end{align*}
    for $j=1,2$.
\end{theorem}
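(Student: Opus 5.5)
The four estimates are treated one at a time, following the pattern of the proof of Theorem~\ref{t:sp_efficiency} with $\varepsilon=1$ (so $\kappa_T\approx h_T$-scaling disappears). The nonlinear terms require bounding the bracket contributions by the error.

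\textbf{Step 1: $\mu_\nc$.}
The bound for $\mu_\nc(T)$ follows directly from the second estimate in \eqref{e:enriching_apost}, applied componentwise and summed over the faces $F\in\mathcal F(T)$, since $\omega_F\subseteq\Omega_T$. Choosing $\Phi=\Psi$ in the minimum gives the second inequality.

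\textbf{Step 2: $\mu_{I_h}$.}
A triangle inequality gives
\[
\lVert\nabla_\pw(\Psi_h-I_h\Psi_h)\rVert_{L^2(T)}\le\lVert\nabla_\pw(\Psi-\Psi_h)\rVert_{L^2(T)}+\lVert\nabla(\Psi-I_h\Psi_h)\rVert_{L^2(T)}.
\]
Summing over $T\in\tri$, the broken gradient term is controlled by a discrete Friedrichs inequality for $\Psi-\Psi_h\in H^2_0+\mathcal M_0$ (as in \cite{Brenner2003}), which gives $\lVert\nabla_\pw(\Psi-\Psi_h)\rVert\lesssim\lVert D^2_\pw(\Psi-\Psi_h)\rVert$. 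The stated estimate carries no square on the right-hand side; I would prove the squared version, from which the stated form follows for bounded errors.

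\textbf{Step 3: $\eta_f$.}
Take $\varphi_T=\Pi_0f\,\flat_T$ in the first component with $\flat_T$ the $H^2_0(T)$ bubble, exactly as in Theorem~\ref{t:sp_efficiency}. Testing \eqref{e:vK_cont_problem} with $(\varphi_T,0)$ gives
\[
\int_T f\varphi_T=a(\psi_1,\varphi_T)+b(\psi_1,\psi_2,\varphi_T)+b(\psi_2,\psi_1,\varphi_T).
\]
Since $D^2_\pw\psi_{h,1}$ is constant on $T$, we have $a_\pw(\psi_{h,1},\varphi_T)=0$. Moreover, $\cof(D^2_\pw\psi_{h,j})\nabla I_h\psi_{h,k}$ is constant on $T$ and $\varphi_T\in H^1_0(T)$, so the discrete bracket terms integrate to zero. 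Subtracting, the difference
\[
b(\psi_1,\psi_2,\varphi_T)-b_\pw(\psi_{h,1},I_h\psi_{h,2},\varphi_T)
\]
is split as
\[
b_\pw(\psi_1-\psi_{h,1},\psi_2,\varphi_T)+b_\pw(\psi_{h,1},\psi_2-I_h\psi_{h,2},\varphi_T),
\]
and similarly for the other bracket.

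Using Hölder's inequality with $\nabla\psi_2\in L^\infty$ from \eqref{e:sobolev_infty} and $\lVert\nabla\varphi_T\rVert_{L^\infty}$, we bound
\begin{align*}
&\lVert D^2_\pw(\Psi-\Psi_h)\rVert_{L^2(T)}\,\lVert\nabla\varphi_T\rVert_{L^\infty(T)},\\
&\lVert D^2_\pw\Psi_h\rVert_{L^2(T)}\,\lVert\nabla(\Psi-I_h\Psi_h)\rVert_{L^2(T)}\,\lVert\nabla\varphi_T\rVert_{L^\infty(T)}.
\end{align*}
This is the main obstacle: the factor $\lVert D^2_\pw\Psi_h\rVert_{L^2(T)}$ is only globally bounded, by stability. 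But $D^2_\pw\psi_{h}$ is constant on $T$, so $\lVert D^2_\pw\Psi_h\rVert_{L^\infty(T)}\lesssim h_T^{-1}$ at worst. I would instead pair it with $\lVert\nabla\varphi_T\rVert_{L^2}$, using the $L^\infty$ bound $\lVert\cof D^2\psi_{h}\rVert_{L^\infty(T)}\lesssim h_T^{-1}\lVert D^2_\pw\Psi_h\rVert_{L^2(T)}$. The scaling $\lVert\nabla\varphi_T\rVert_{L^2}\approx h_T\lVert\Pi_0 f\rVert_{L^2(T)}$ then compensates, and global stability $\lVert D^2_\pw\Psi_h\rVert\lesssim1$ finishes the bound. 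The scalings $\lVert D^2\varphi_T\rVert\approx h_T^{-2}\lVert\varphi_T\rVert\approx h_T^{-1}\lVert\nabla\varphi_T\rVert_{L^\infty}$ yield $h_T^2\lVert f\rVert_{L^2(T)}$ controlled by the error plus $\osc$.

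\textbf{Step 4: $\eta_j$.}
Use the face bubble $\varphi=[A_j]_F\nu_F\,\flat_F$, with $\flat_F=\lambda_a^2\lambda_b^2\in H^2_0(\omega_F)$, placed in the appropriate component ($\varphi_2$ for $j=2$, $\varphi_1$ for $j=1$). Integration by parts gives
\[
\lVert[A_j]_F\nu_F\rVert_{L^2(F)}^2\lesssim\int_{\omega_F}A_j\cdot\nabla\varphi .
\]
We then insert the continuous equation tested with this $\varphi$, which produces the terms $a(\psi,\varphi)$ and $F(\varphi)$ plus the continuous bracket. Writing
\[
a(\psi,\varphi)=a_\pw(\psi-\psi_h,\varphi)+a_\pw(\psi_h,\varphi),
\]
the last term becomes a face integral of $[D^2_\pw\psi_h\nu]_F\cdot\nabla\varphi$. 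Its jump is bounded through the bubble argument from the proof of Theorem~\ref{t:sp_efficiency} with $\varepsilon=\delta=1$; that argument now again contains bracket terms, which are treated as in Step 3. The bracket differences are estimated as in Step 3, and the $f$ term by the efficiency of $\eta_f$. Multiplying by $h_T^{3/2}$ and using the scalings of $\flat_F$ gives the claim on $\omega_T$.
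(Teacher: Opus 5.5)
Your proposal is correct and follows the paper's proof essentially step by step. It uses the enriching-operator estimate \eqref{e:enriching_apost} for $\mu_\nc$ and Brenner's discrete Poincar\'e inequality for $\mu_{I_h}$; your remark about the missing squares is justified. It uses the volume and face bubbles, with the discrete terms vanishing because $D^2_\pw\Psi_h$ and $\nabla I_h\Psi_h$ are piecewise constant, and the splitting $B_\pw(\Psi-\Psi_h,\Psi,\Phi)+B_\pw(\Psi_h,\Psi-I_h\Psi_h,\Phi)$. Finally, it uses the $\delta=1$ edge-bubble argument for $[D^2_\pw\psi_{h,j}\nu_F]_F$; there, subtracting the discrete bracket, which vanishes because $\chi_F=0$ on every edge, works as well as the paper's use of the divergence-free cofactor.

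The ``main obstacle'' in Step~3 is not one, since $\lVert D^2_\pw\Psi_h\rVert_{L^2(T)}\le\lVert D^2_\pw\Psi_h\rVert_{L^2(\Omega)}\lesssim 1$ lets you pair directly with $\lVert\nabla\varphi_T\rVert_{L^\infty(T)}\approx h_T^{-2}\lVert\Pi_0 f\rVert_{L^2(T)}$, as the paper does. Your alternative pairing reaches the same bound only after correcting two scalings: $\lVert\nabla\varphi_T\rVert_{L^2(T)}\approx h_T^{-1}\lVert\Pi_0 f\rVert_{L^2(T)}$ (not $h_T$), and $\lVert D^2\varphi_T\rVert_{L^2(T)}\approx\lVert\nabla\varphi_T\rVert_{L^\infty(T)}$.
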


\begin{remark}
    The error estimator $\eta_{I_h}$ is only globally 
    efficient due to the global Poincar\'e inequality.
    However, we have the local version 
    \begin{align*}
        \mu_{I_h}^2(T)
          &\lesssim \lVert \nabla_\pw(\Psi-\Psi_h)\rVert_{L^2(T)}
              + \lVert \nabla (\Psi-I_h\Psi_h)\rVert_{L^2(T)}.
    \end{align*}
\end{remark}

\begin{proof}
    \textbf{Efficiency of $\mu_\nc$.}
    The efficiency of $\mu_\nc$ follows from \eqref{e:enriching_apost}.
    
    \textbf{Efficiency of $\mu_{I_h}$.}
    Since $\nabla_\pw\Psi_h$ is a Crouzeix-Raviart function,
    the piecewise Poincar\'e inequality \cite{Brenner2003}
    \begin{align*}
        \lVert \nabla_\pw(\Psi-\Psi_h)\rVert_{L^2(\Omega)}
        \lesssim \lVert D^2_\pw(\Psi-\Psi_h)\rVert_{L^2(\Omega)}
    \end{align*}
    together with a triangle inequality proves the efficiency 
    of $\mu_{I_h}$.
    
    \textbf{Efficiency of $\eta_f$.}
    Let $\flat_T\in H^2_0(T)$ denote the $H^2$ volume bubble function
    with $\|\varphi_T\|_{L^\infty(T)}\approx1$
    and set $\varphi:=\Pi_0 f \,\flat_T$.
    Then 
    \begin{align*}
        \lVert \Pi_0 f\rVert_{L^2(T)}^2
        \lesssim \int_T \Pi_0 f \varphi\,dx 
        = \int_T f \varphi\,dx + \int_T (\Pi_0 f - f) \varphi\,dx.
    \end{align*}
    The scaling $\lVert\varphi\rVert_{L^2(\Omega)}\lesssim \lVert\Pi_0 f\rVert_{L^2(T)}$
    proves for the last term 
    \begin{align*}
        \int_T (\Pi_0 f - f) \varphi\,dx \lesssim h_T^{-2}  \lVert\Pi_0 f\rVert_{L^2(T)} \osc(f,\{T\}).
    \end{align*}
    Since $\varphi\in H^2_0(\Omega)$, the continuous problem
    \eqref{e:vK_cont_problem} implies for the second term
    for $\Phi=(\varphi,0)$
    \begin{align*}
        \int_T f \varphi\,dx
        = A(\Psi,\Phi) + B(\Psi,\Psi,\Phi).
    \end{align*}
    Since $\varphi\in H^2_0(T)$, it holds that $A_\pw(\Psi_h,\Phi)=0$.
    Moreover, since $D^2_\pw \Psi_h$ and $\nabla_\pw I_h\Psi_h$ are
    piecewise constant, we also have from integration by parts that
    $B_\pw(\Psi_h,I_h\Psi_h,\Phi)=0$. We thus obtain
    \begin{align*}
        \int_T f \varphi\,dx
        = A_\pw(\Psi-\Psi_h,\Phi) 
        + B(\Psi,\Psi,\Phi)
        - B_\pw(\Psi_h,I_h\Psi_h,\Phi)
        .
    \end{align*}
    We use the multilinearity of $B_\pw$ and compute
    \begin{align*}
        B(\Psi,\Psi,\Phi)
           - B_\pw(\Psi_h,I_h\Psi_h,\Phi)
        =B_\pw(\Psi-\Psi_h,\Psi,\Phi)
           +B_\pw(\Psi_h,\Psi-I_h\Psi_h,\Phi)
        .
    \end{align*}
    We note that $\|D^2 \Phi\|_{L^2(\Omega)}\lesssim h_T^{-2} \|\Pi_0 f\|_{L^2(T)}$
    and estimate the last two terms 
    with \eqref{e:vK_cont_B_222} and \eqref{e:vK_cont_B_22infty}
    as follows
    $$
    B_\pw(\Psi-\Psi_h,\Psi,\Phi)
    \lesssim
    \|D^2_\pw (\Psi-\Psi_h)\|_{L^2(T)}
        \,\|D^2\Psi\|_{L^2(T)}
        \,h_T^{-2}\, \|\Pi_0 f\|_{L^2(T)}
    $$
    and
    $$
    B_\pw(\Psi_h,\Psi-I_h\Psi_h,\Phi)
    \leq
    \|D^2_\pw \Psi_h\|_{L^2(T)}\,
    \|\nabla (\Psi-I_h\Psi_h)\|_{L^2(T)}\,
    \|\nabla \Phi\|_{L^\infty(T)}
    .
    $$
    From scaling we have
    $\|\nabla \Phi\|_{L^\infty(T)}
    \lesssim h_T^{-1} \|\nabla \Phi\|_{L^2(T)}
    \lesssim h_T^{-2} \|\Pi_0 f\|_{L^2(T)}$.

    \textbf{Efficiency of $\eta_1$.}
    For the efficiency of $\eta_1(T)$, recall the abbreviation
    $A_1$ from \eqref{e:A1A2}
    and 
    let $\flat_F\in H^2_0(\omega_F)$ be the $H^2$ face bubble 
    function with $\|\flat_F\|_{L^\infty(\omega_F)}\approx1$
    as in the 
    proof of Theorem~\ref{t:sp_efficiency}.
    Define $\varphi:=[A_1]_F\cdot\nu_F\,\flat_F$.
    Then 
    \begin{align*}
        \lVert [A_1\cdot\nu_F]_F\rVert_{L^2(F)}^2
        \lesssim \int_F  \varphi \, [A_1\cdot \nu_F]_F\,ds 
        = \int_{\omega_F} A_1\cdot \nabla\varphi\,dx
        = B_\pw(\Psi_h,I_h\Psi_h,\Phi)
    \end{align*}
    for $\Phi=(\varphi,0)$. 
    The continuous problem \eqref{e:vK_cont_problem} implies 
    \begin{align*}
        B_\pw(\Psi_h,I_h\Psi_h,\Phi)
        = B_\pw(\Psi_h,I_h\Psi_h,\Phi) -B(\Psi,\Psi,\Phi)-A(\Psi,\Phi) + F(\Phi).
    \end{align*}
    The first two terms are estimated via 
    \eqref{e:vK_cont_B_222} and \eqref{e:vK_cont_B_22infty}
    \begin{align*}
        &B_\pw(\Psi_h,I_h\Psi_h,\Phi) -B(\Psi,\Psi,\Phi)
        = B_\pw(\Psi_h-\Psi,\Psi,\Phi) + B_\pw(\Psi_h,I_h\Psi_h-\Psi,\Phi)\\
        &\lesssim \lVert D_\pw^2 (\Psi-\Psi_h)\rVert_{L^2(\omega_F)} 
            \,\lVert D^2 \Psi\rVert_{L^2(\omega_F)} 
            \,\lVert D^2 \Phi\rVert_{L^2(\omega_F)}  \\
        &\qquad\qquad\qquad\qquad\qquad
            + \lVert D_\pw^2 \Psi_h\rVert_{L^2(\omega_F)} 
            \,\lVert \nabla (\Psi-I_h\Psi_h)\rVert_{L^2(\omega_F)}
            \,\lVert \nabla \Phi\rVert_{L^\infty(\omega_F)}.
    \end{align*}
    Moreover, 
    \begin{align*}
        F(\Phi)\lesssim \lVert f\rVert_{L^2(\omega_F)}\,\lVert\Phi\rVert_{L^2(\Omega)}.
    \end{align*}
    Furthermore, a piecewise integration by parts
    and $\Phi=(\varphi,0)$ with $\varphi\in H^2_0(\omega_F)$ leads to 
    \begin{align*}
        &A(\Psi,\Phi)
        = A_\pw(\Psi-\Psi_h,\Phi) + A_\pw(\Psi_h,\Phi)\\
        &\quad
        = A_\pw(\Psi-\Psi_h,\Phi) 
        + \int_F \nabla\varphi\cdot[D_\pw^2 \psi_{h,1}\nu_F]_F\,ds \\
        &\quad
        \lesssim \lVert D^2_\pw (\Psi-\Psi_h)\rVert_{L^2(\omega_F)}
            \, \lVert D^2 \Phi\rVert_{L^2(\omega_F)}
        + \lVert [D_\pw^2 \psi_{h,1}\nu_F]_F\rVert_{L^2(F)}
            \,\lVert \nabla\varphi\rVert_{L^2(F)}.
    \end{align*}
    The scaling of the bubble function reads 
    \begin{align*}
        \lVert D^2\Phi\rVert_{L^2(\Omega)}
        &\approx \lVert \nabla\Phi\rVert_{L^\infty(\Omega)}
        \approx h_T^{-3/2}\lVert[A_1\cdot\nu_F]_F\rVert_{L^2(F)},\\
        \lVert \Phi\rVert_{L^2(\Omega)}
        &\approx h_T^{1/2} \lVert[A_1\cdot\nu_F]_F\rVert_{L^2(F)}\\
        \lVert \nabla\varphi\rVert_{L^2(F)}
        &\approx h_T^{-1} \lVert[A_1\cdot\nu_F]_F\rVert_{L^2(F)}.
    \end{align*}
    The combination of the above inequalities
    together with the stability of the continuous and discrete 
    problem proves 
    \begin{align*}
        h_T^{3/2}\lVert [A_1\cdot\nu_F]_F\rVert_{L^2(F)}
        &\lesssim \lVert D_\pw^2(\Psi-\Psi_h)\rVert_{L^2(\omega_F)}
        + \lVert\nabla(\Psi-I_h\Psi_h)\rVert_{L^2(\omega_F)}\\
        &\qquad \qquad
        + \lVert h_\tri^2 f\rVert_{L^2(\omega_F)}
        + h_T^{1/2} \lVert [D_\pw^2 \psi_{h,1}\nu_F]_F\rVert_{L^2(F)}.
    \end{align*}
    The efficiency estimate of $\eta_1$ therefore follows, once the 
    efficiency of the term 
    $h_T^{1/2} \lVert [D_\pw^2 \psi_{h,1}\nu_F]_F\rVert_{L^2(F)}$
    is shown.

    The proof follows similar as the proof of the efficiency of 
    $\eta_2$ in Theorem~\ref{t:sp_efficiency}: 
    We start with a split in the tangential and the normal part,
    i.e.,
    \begin{align*}
        &h_T^{1/2} \lVert [D_\pw^2 \psi_{h,1}\nu_F]_F\rVert_{L^2(F)}\\
        &\qquad\qquad
        \leq h_T^{1/2} \lVert [D_\pw^2 \psi_{h,1}\nu_F]_F\cdot\nu_F\rVert_{L^2(F)}
        + h_T^{1/2} \lVert [D_\pw^2 \psi_{h,1}\nu_F]_F\cdot\tau_F\rVert_{L^2(F)}.
    \end{align*}
    The tangential part is bounded through an inverse inequality 
    by sums of $\mu_\nc(T)$ of adjacent triangles $T$.
    To bound the normal part, let $\chi_{F}\in H^2_0(\omega_F)$ denote the 
    edge bubble function from the proof of Theorem~\ref{t:sp_efficiency}
    (with $\delta=1$) with the properties 
    $\chi_F\vert_E=0$ for all $E\in\mathcal{F}$ and 
    $\nabla\chi_F\vert_E=0$ for all $E\in\mathcal{F}\setminus\{E\}$
    and the scaling~\eqref{e:scaling_bubble}. 
    Set $\varphi=[D_\pw^2\psi_{h,1}\cdot\nu_F]_F\,\nabla\chi_F$
    and $\Phi=(\varphi,0)$. 
    The arguments in the proof of Theorem~\ref{t:sp_efficiency}
    then show with $X = (\chi,0)$ that
    \begin{align*}
        h_T^{1/2}\lVert [D_\pw^2 \psi_{h,1}\nu_F]_F\rVert_{L^2(F)}
        \lesssim h_T A_\pw(\Psi_h,X).
    \end{align*}
    The continuous problem \eqref{e:vK_cont_problem} leads to
    \begin{align}\label{e:vK_proof_efficiency_1}
        A_\pw(\Psi_h,X)
        = A_\pw(\Psi_h-\Psi,X) +
        F(X)-B(\Psi,\Psi,X).
    \end{align}
    The first term on the right-hand side is bounded by 
    \begin{align*}
        A_\pw(\Psi_h-\Psi,X)
        \lesssim h_T^{-1}\lVert D_\pw^2(\Psi-\Psi_h)\rVert_{L^2(\omega_F)},
    \end{align*}
    where the scaling from \eqref{e:scaling_bubble} was used.
    The second term on the right-hand side of \eqref{e:vK_proof_efficiency_1}
    is estimated as 
    \begin{align*}
        F(X) 
        \lesssim \lVert h_\tri f\rVert_{L^2(\omega_F)}.
    \end{align*}
    Since $\cof(D^2\psi_{h,1})$ is divergence free, $\nabla I_h \psi_{h,1}$ 
    is piecewise constant and $\chi_F\vert_E=0$ for all 
    $E\in\mathcal{F}$, it follows $B(\Psi,I_h\Psi_h,X)=0$.
    Therefore, the third term on the right-hand side of 
    \eqref{e:vK_proof_efficiency_1} is bounded as follows
    \begin{align*}
        -B(\Psi,\Psi,X)
        &= B(\Psi,I_h\Psi_h-\Psi,X)\\
        &\lesssim h_T^{-1}\,\lVert D^2\Psi\rVert_{L^2(\omega_F)}
            \,\lVert \nabla(\Psi- I_h\Psi_h)\rVert_{L^2(\omega_F)},
    \end{align*}
    where we used \eqref{e:vK_cont_B_22infty} and the scaling 
    $\lVert\nabla X\rVert_{L^\infty(\Omega)}\approx h_T^{-1}$ in the last step.
    The combination of the previously displayed formulae 
    with the stability of the system yield 
    \begin{align*}
        &h_T^{1/2}\lVert [D_\pw^2 \psi_{h,1}\nu_F]_F\rVert_{L^2(F)}\\
        &\qquad\qquad
        \lesssim \lVert D_\pw^2(\Psi-\Psi_h)\rVert_{L^2(\omega_F)}
        + \lVert \nabla(\Psi- I_h\Psi_h)\rVert_{L^2(\omega)}
        + \lVert h_\tri^2 f\rVert_{L^2(\omega_F)} .
    \end{align*}
    Together with the efficiency of the term 
    $\lVert h_\tri^2 f\rVert_{L^2(\omega_F)}$, this implies the 
    efficiency of $\eta_1$.

    \textbf{Efficiency of $\eta_2$.}
    The efficiency of $\eta_2$ follows the same lines with the 
    test function $\Phi=(0,\varphi)$ with 
    $\varphi = \flat_F\,(\cof(D_\pw^2 \psi_{h,1})\nabla I_h\psi_{h,1})$.
\end{proof}

\section{Numerical experiments}
\label{s:num}
In this section, two numerical experiments are conducted for the singularly 
perturbed biharmonic equation from Section~\ref{s:sing}
and two for the von K{\'a}rm{\'a}n equation from Section~\ref{s:vK}. 
In all the experiments, the convergence behavior of the errors and the corresponding estimators is examined under uniform and adaptive mesh refinements. 
The convergence rates of the errors are computed with respect to the Morley degrees of freedom. 
In the adaptive algorithm, D{\"o}rfler marking
\cite{Doerfler1996} with parameter $\theta = 0.25$ is used.

\subsection{Singularly perturbed biharmonic equation}

\begin{example}\label{eg_xBL}
Consider the PDE \eqref{e:intro_sing_pert_strong_form}
on the square domain
$\Omega=(0,1)^2$ with the exact solution
$u(x,y)=w(x)w(y)$, where $w$ is given \cite[Section 5.3]{GallistlTian2024} by
\[
w(t) = 
    \sin(\pi t) 
    - \pi \varepsilon\,
      \left(\cosh\left(\tfrac{1}{2\varepsilon}\right)
            - \cosh\left(\tfrac{2t-1}{2\varepsilon}\right)\right)
           /{\sinh\left(\tfrac{1}{2\varepsilon}\right)}
           .
\]
\end{example}
In this test case, the behavior of the solution depends on the parameter $\varepsilon$. 
For $\varepsilon = 1$, the solution is non-oscillatory, whereas decreasing $\varepsilon$ leads to the formation of a boundary layer. 
We compute the solution using both uniform and adaptive mesh refinement for $\varepsilon = 1$ and $\varepsilon = 10^{-2}$. 
The numerical simulations start from an initial mesh consisting of 16 uniform triangles, obtained by applying red refinement to a criss-cross mesh.

For  $\varepsilon = 1$, the solution exhibits optimal
convergence rates under both uniform and adaptive mesh refinements
(see Figure~\ref{f:xBL_fig_a} for an adaptively generated mesh).
In contrast, for $\varepsilon=10^{-2}$, the solution develops a boundary layer because the limiting solution $\sin(\pi x)\sin(\pi y)$ does not satisfy the clamped boundary condition. 
Under adaptive mesh refinement, this boundary layer is
captured by the adaptive mesh illustrated in Figure~\ref{f:xBL_fig_b}.

Figure~\ref{f:xBL_fig_c} shows optimal convergence rates for
the error $\ennorm{u-u_h}_{\varepsilon,\pw}
+\lVert \nabla_\pw(u-I_h u_h)\rVert_{L^2(\Omega)}$
and the complete estimator $\eta$ under uniform and adaptive
mesh refinement when $\varepsilon = 1$.
For $\varepsilon = 10^{-2}$, the error suffers from
the presence of the boundary layer.
In contrast, adaptive mesh refinement shows the optimal
convergence rate almost from the beginning, see Figure~\ref{f:xBL_fig_d}.
Once the local mesh-size $h_T$ becomes smaller than
$\varepsilon$, which occurs at approximately $10^4$ degrees of freedom,
the value of $\kappa_T$ changes from $\kappa_T=1$ to
$\kappa_T=h_T/\varepsilon < 1$.
This transition is reflected in the estimator
plots in Figure~\ref{f:xBL_fig_e} for uniform mesh-refinement.
In particular, in $\eta_1$, the coefficient has dominance of
$\varepsilon$, which exhibits nonnegative convergence rates
only after $10^4$ degrees of freedom.
For adaptive mesh-refinement, this preasymptotic effect is reduced
to the range of approximately $3\cdot 10^3$ degrees of freedom,
see Figure~\ref{f:xBL_fig_f}.

\begin{figure}
\centering
\begin{subfigure}{.48\textwidth}
\includegraphics[width=\textwidth]{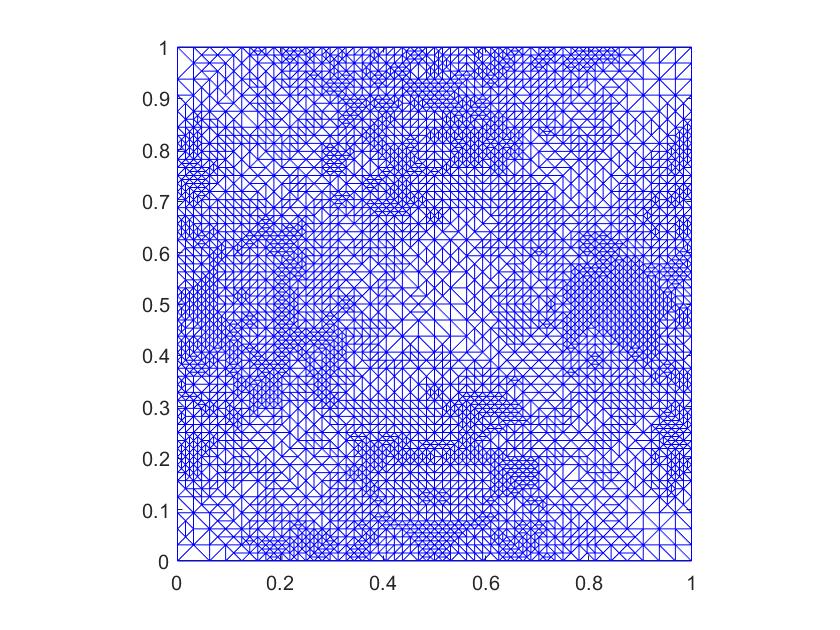} 
\caption{Adaptive mesh with $\varepsilon=1$ (NDOF 18\,291)}
\label{f:xBL_fig_a}
\end{subfigure}
\hfil
\begin{subfigure}{.48\textwidth}
\includegraphics[width=\textwidth]{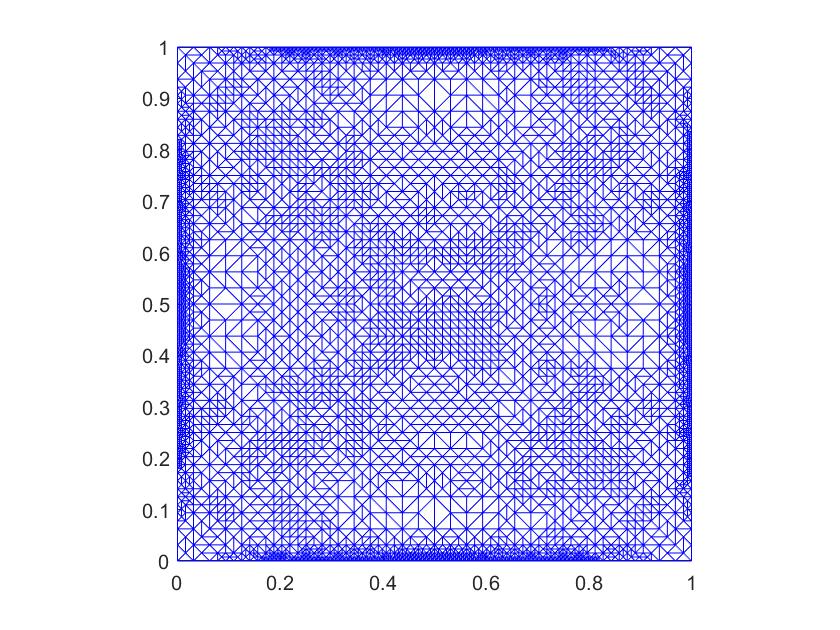}
\caption{Adaptive mesh with $\varepsilon=10^{-2}$ (NDOF 15\,677)}
\label{f:xBL_fig_b}
\end{subfigure}

\begin{subfigure}{.48\textwidth}
\includegraphics[width=\textwidth]{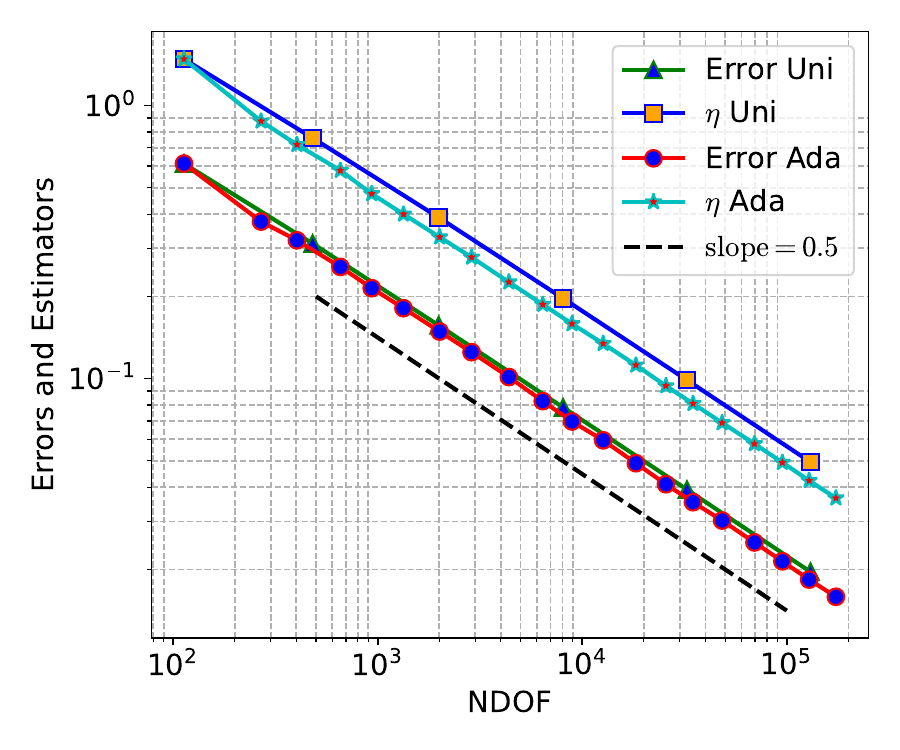}
\caption{Errors and estimators with $\varepsilon=1$}
\label{f:xBL_fig_c}
\end{subfigure} 
\hfil
\begin{subfigure}{.48\textwidth}
\includegraphics[width=\textwidth]{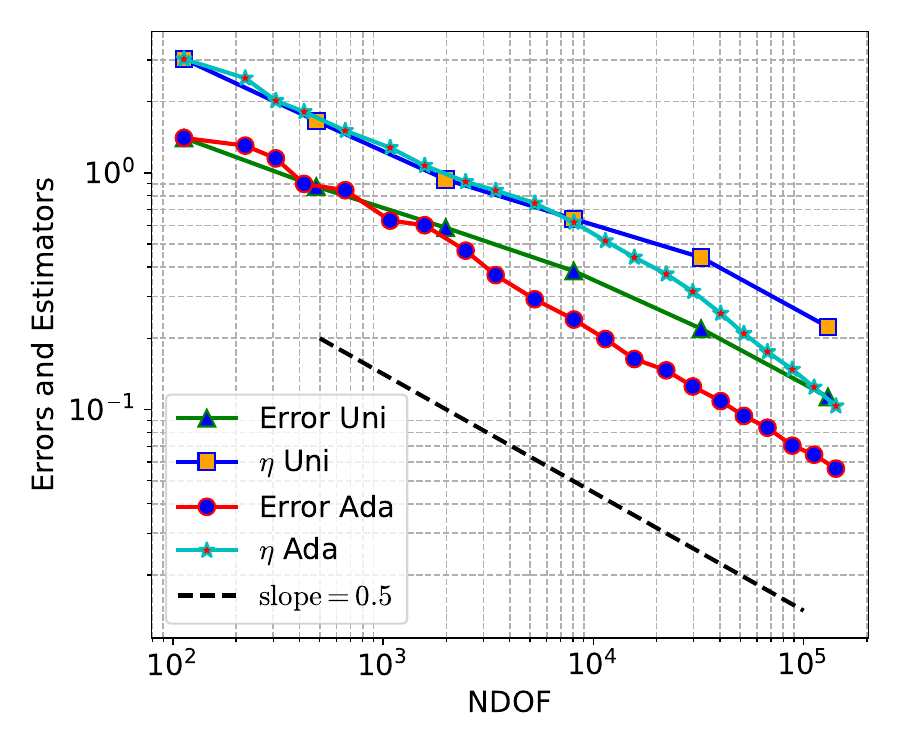}
\caption{Errors and estimators with $\varepsilon=10^{-2}$}
\label{f:xBL_fig_d}
\end{subfigure} 

\begin{subfigure}{.48\textwidth}
\includegraphics[width=\textwidth]{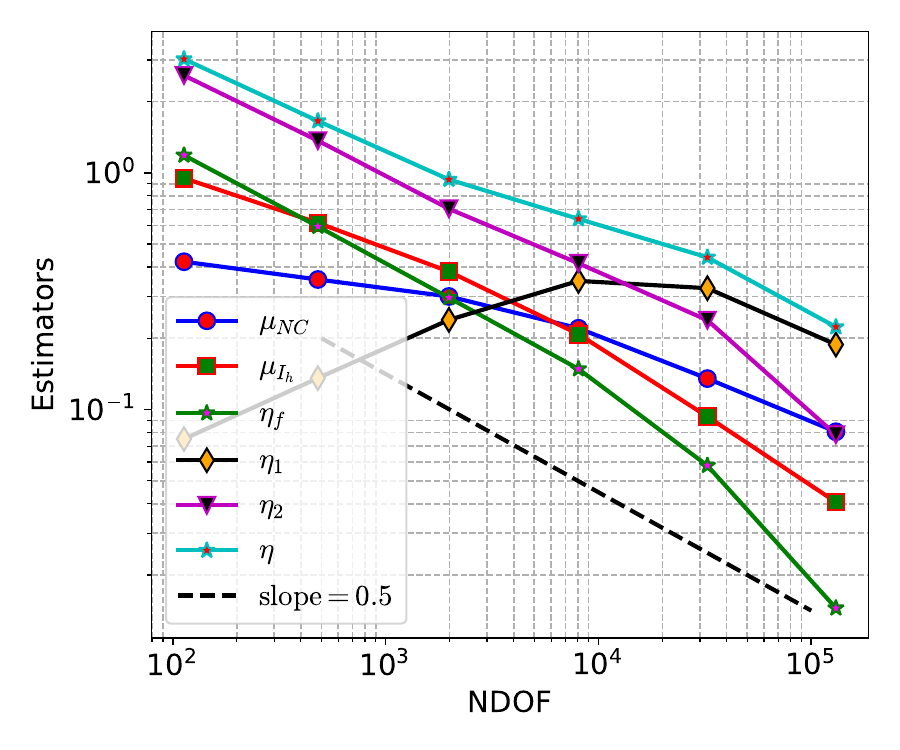}
\caption{Estimators on uniform meshes $(\varepsilon=10^{-2})$}
\label{f:xBL_fig_e}
\end{subfigure} 
\hfil
\begin{subfigure}{.48\textwidth}
\includegraphics[width=\textwidth]{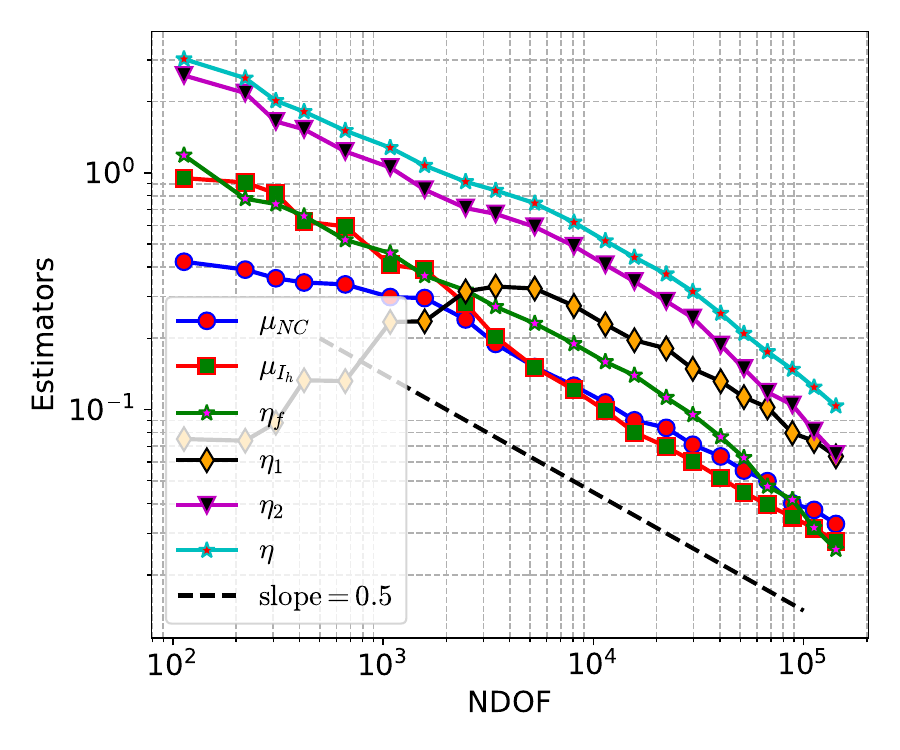}
\caption{Estimators on adaptive meshes $(\varepsilon=10^{-2})$}
\label{f:xBL_fig_f}
\end{subfigure} 

\caption{\small Adaptive meshes, errors, and estimators in Example \ref{eg_xBL}.
  ``Ada'' indicates adaptive, ``Uni'' indicates uniform mesh refinement.}
 %\label{xBL_fig}
\end{figure}

\begin{example}\label{eg_LBL}
Let $\Omega=(-1,1)^2\setminus ([0,1)\times (-1,0])$.
Consider the PDE \eqref{e:intro_sing_pert_strong_form}
with $f=(|x+y|)^{-1/3}$ \cite[Section 5.4]{GallistlTian2024}.
\end{example}
In this example, the exact solution is unknown.
The numerical experiment begins with an initial mesh that is obtained
by performing two successive red-refinements on the L-shaped domain,
which consists of six isosceles triangles. Figure~\ref{f:LBL_fig_a}
shows the total estimators $\eta$
for $\varepsilon\in\{1,10^{-2},10^{-5}\}$ under uniform and
adaptive mesh refinement.
Under uniform mesh refinement, the convergence rates of the
estimators are suboptimal due to the singularity at the
re-entrant corner.
Moreover, for $\varepsilon=10^{-2}$, a preasymptotic worse approximation
can be seen until approximately $3\cdot 10^{-3}$ degrees of freedom for
adaptive meshes, while the uniform meshes seem to suffer from
this effect in the whole range of performed computations.
For $\varepsilon=10^{-5}$, this preasymptotic effect is visible
both for uniform and adaptive meshes in the range of computations.
This effect is probably caused by the appearance of the boundary
layer.
While the adaptive mesh for $\varepsilon=1$ refines at the
re-entrant corner due to the singularity, see Figure~\ref{f:LBL_fig_b},
the adaptive mesh for $\varepsilon=10^{-2}$ resolves the singularity,
the boundary layer and the profile of $f$, see Figure~\ref{f:LBL_fig_c}.

\begin{figure}
\centering
\begin{subfigure}{.42\textwidth}
\includegraphics[width=\textwidth]{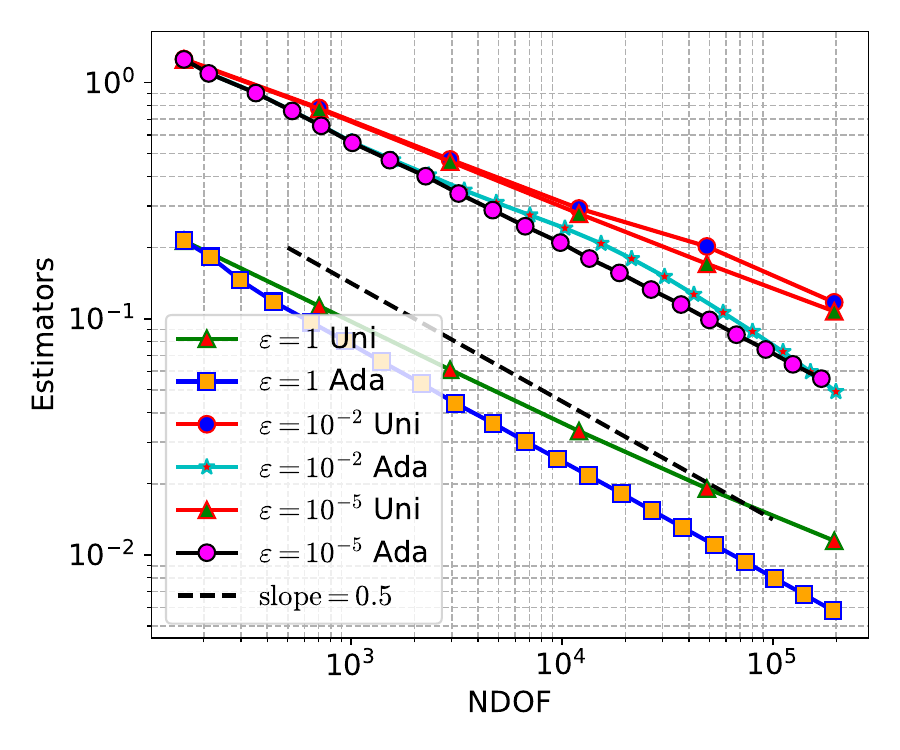}
\caption{Total estimators}
\label{f:LBL_fig_a}
\end{subfigure}

\begin{subfigure}{.42\textwidth}
\includegraphics[width=\textwidth]{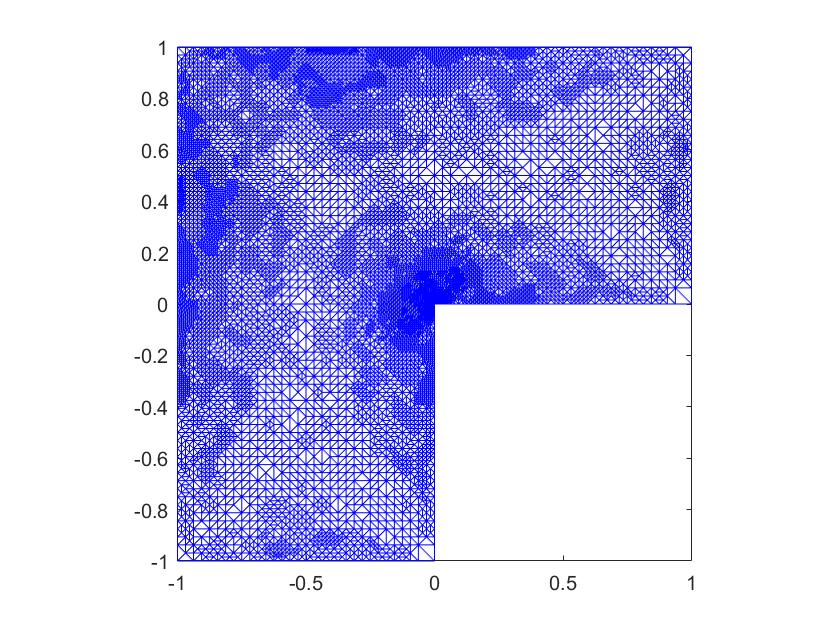}
\caption{Adaptive mesh with $\varepsilon=1$ (NDOF 37\,401)}
\label{f:LBL_fig_b}
\end{subfigure}
\hfil
\begin{subfigure}{.42\textwidth}
\includegraphics[width=\textwidth]{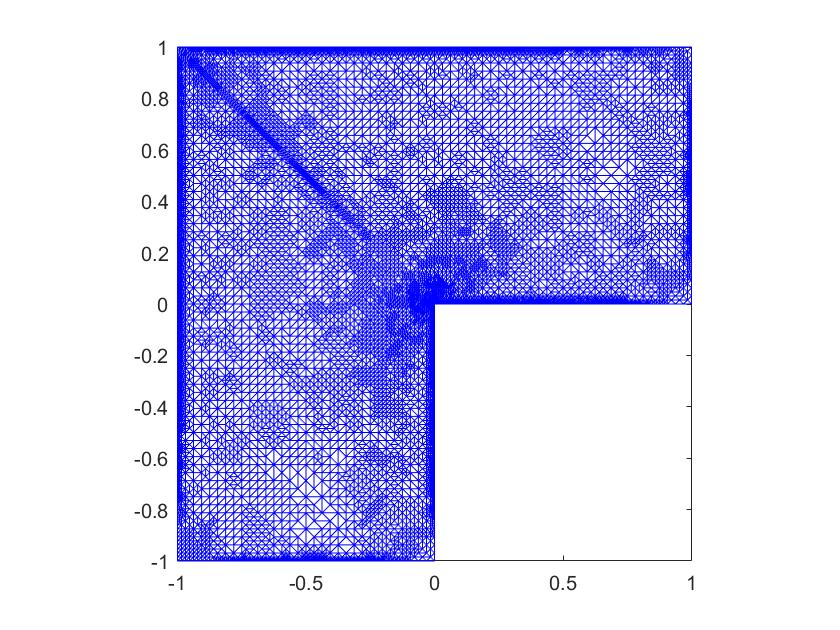}
\caption{Adaptive mesh with $\varepsilon=10^{-2}$ (NDOF 42\,231)}
\label{f:LBL_fig_c}
\end{subfigure}
\caption{\small Estimators and adaptive mesh with various values of $\varepsilon$ in Example \ref{eg_LBL}}
% \label{LBL_fig}
\end{figure}

\subsection{Von K{\'a}rm{\'a}n equation}
Two numerical experiments are considered for the von K{\'a}rm{\'a}n equation. 
The first is formulated on a convex domain, whereas the second is on a non-convex domain.
In the first case, the exact solution is smooth, while
in the second example, the exact solution exhibits a corner singularity.
 The performance of the errors and estimators under uniform and adaptive mesh refinements for both examples is presented below.

\begin{example}\label{eg_vke1}
    Consider the von K{\'a}rm{\'a}n equation on the unit square domain with the exact solution $\Psi=(\psi_1,\psi_2)$ where  ${\psi}_1=\sin^2(\pi x)\sin^2(\pi y)$ and ${\psi}_2=x^2 y^2 (1-x)^2 (1-y)^2$. 
\end{example}

\begin{example}\label{eg_vke2}
   Let $\Omega=(-1,1)^2\setminus \operatorname{conv}\{ (0,0), (1,-1/2),(1,0) \}$
   be a domain with cusp with an interior angle 
   $\omega=7\pi/4$.
   Consider the von K{\'a}rm{\'a}n equation with the exact solution
   \cite{Grisvard1992} given in polar coordinates by   
   $\Psi=(\psi_1,\psi_2)$ where ${\psi}_1={\psi}_2=(r^2\cos^2 \theta -1)^2(r^2\sin^2 \theta -1)^2r^{1+\gamma}g_{\gamma,\omega}(\theta)$ with $\gamma= 0.5006083\dots$ is a non-characteristic root of $\sin^2(\gamma \omega)=\gamma^2\sin^2(\omega)$,
   and $g_{\gamma,\omega}(\theta)=\big( \frac{1}{\gamma-1}\sin((\gamma-1)\omega)-\frac{1}{\gamma+1}\sin((\gamma+1)\omega)\big) (\cos((\gamma-1)\theta)-\cos((\gamma+1)\theta))-\big( \frac{1}{\gamma-1}\sin((\gamma-1)\theta)-\frac{1}{\gamma+1}\sin((\gamma+1)\theta)\big)(\cos((\gamma-1)\omega)-\cos((\gamma+1)\omega)$. 
\end{example}

Figures \ref{f:vke2_fig_a}--\ref{f:vke2_fig_b} show the errors
$\lVert D^2_{\mathrm{pw}}(\Psi-\Psi_h)\rVert_{L^2(\Omega)}$
($H^2$ err),
$\lVert \nabla(\Psi-I_h\Psi_h)\rVert_{L^2(\Omega)}$ ($H^1$ int err), and the complete estimator ($\eta$) under uniform (Uni) and adaptive (Ada) refinements for Examples \ref{eg_vke1} and \ref{eg_vke2}.

As displayed in Figure~\ref{f:vke2_fig_a},
all error quantities and the complete estimator for Example \ref{eg_vke1}
under both the uniform and adaptive mesh refinements, exhibit the optimal convergence rate of $0.5$ with respect to the number of degrees of freedom (NDOF), which is expected since the domain is convex. 
In contrast, for Example \ref{eg_vke2} the $H^2$-seminorm error shows suboptimal convergence under uniform refinement due to the presence of corner singularities, which motivates the use of adaptive mesh refinement. It is observed that under adaptive mesh refinement, both the errors and the estimator (see Figure \ref{f:vke2_fig_b}) achieve optimal convergence rates.
The error estimator contributions are separately plotted in Figure~\ref{f:vke2_fig_c}.
Furthermore, Figure~\ref{f:vke2_fig_d} illustrates the adaptive mesh at the 15th iteration,
where strong refinement is observed near the critical region.

\begin{figure}
\centering
\begin{subfigure}{.48\textwidth}
\includegraphics[width=\textwidth]{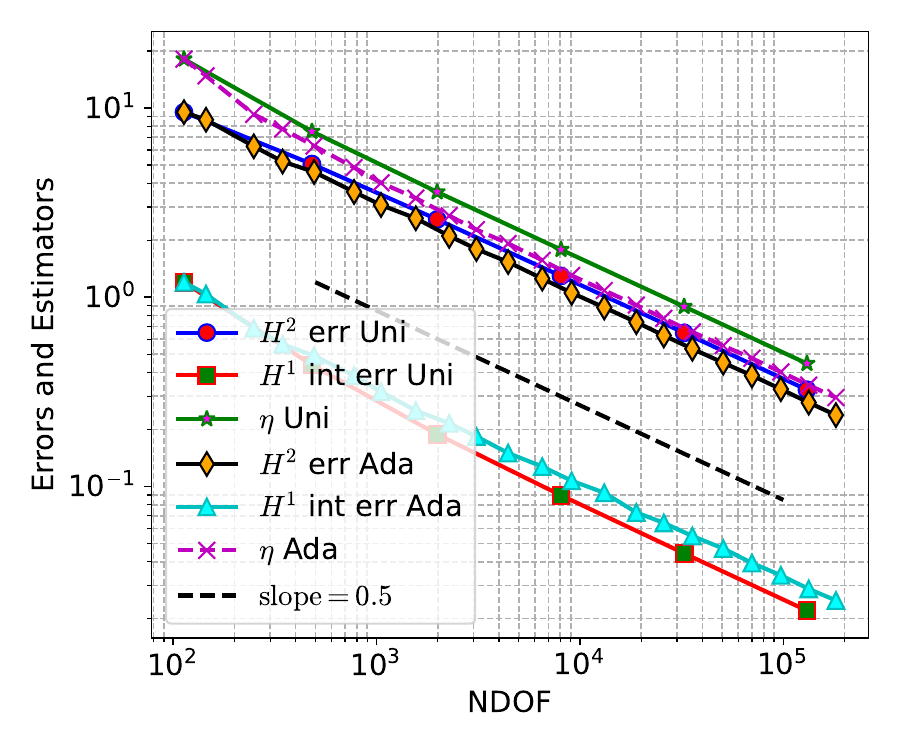}
\caption{Comparison of errors and estimators for Ex.~\ref{eg_vke1}}
\label{f:vke2_fig_a}
\end{subfigure}
\hfil 
\begin{subfigure}{.48\textwidth}
\includegraphics[width=\textwidth]{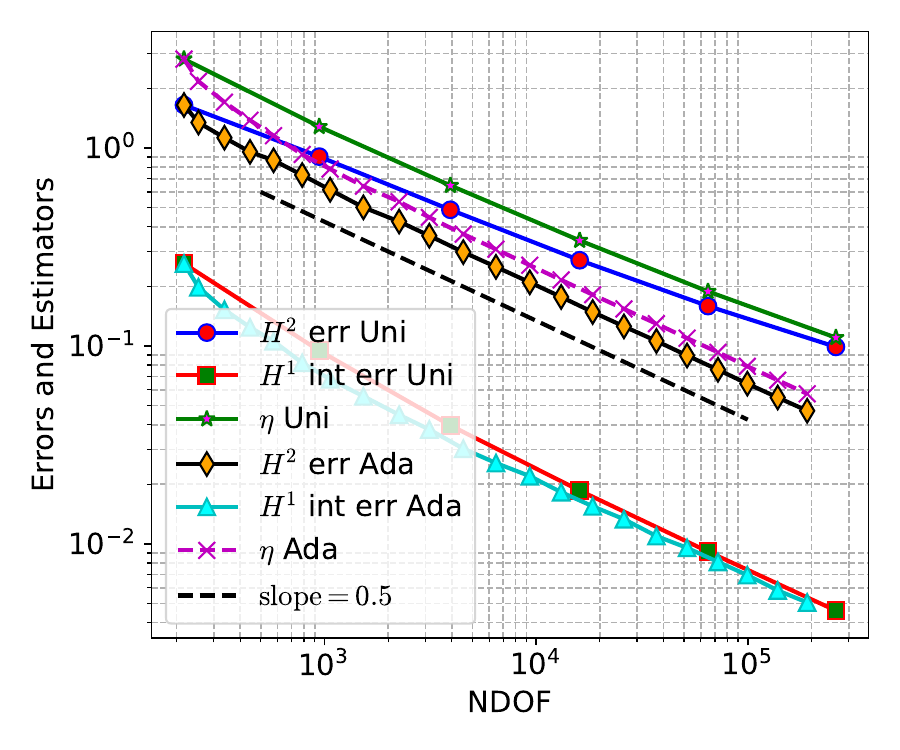}
\caption{Comparison of errors and estimators for Ex.~\ref{eg_vke2}}
\label{f:vke2_fig_b}
\end{subfigure}

\begin{subfigure}{.48\textwidth}
\includegraphics[width=\textwidth]{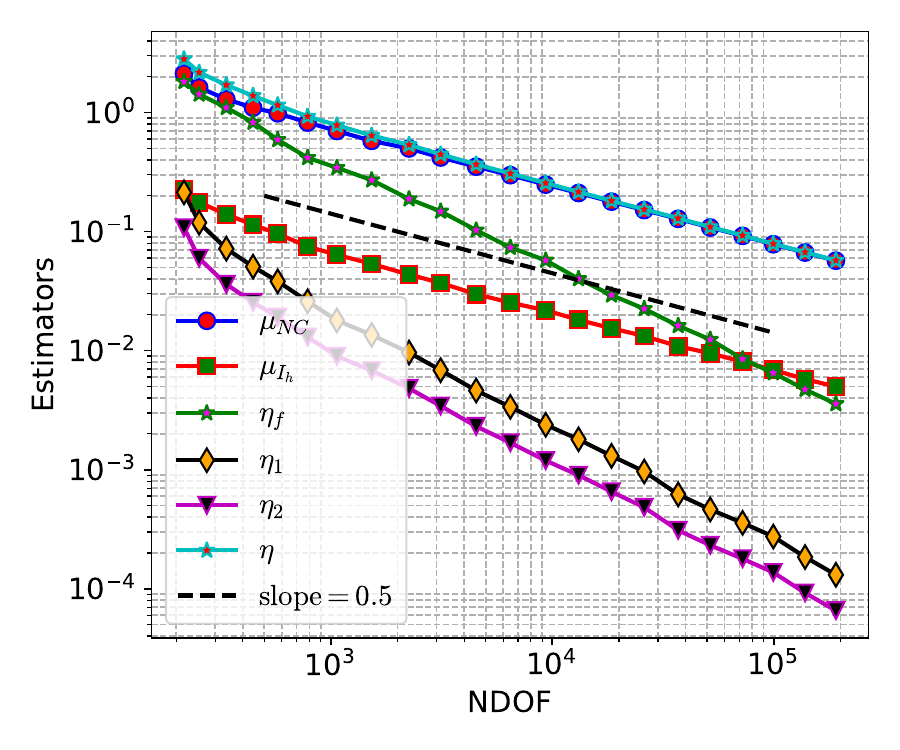}
\caption{Estimators on adaptive meshes in Ex.~\ref{eg_vke2}}
\label{f:vke2_fig_c}
\end{subfigure}
\hfil
\begin{subfigure}{.48\textwidth}
\includegraphics[width=\textwidth]{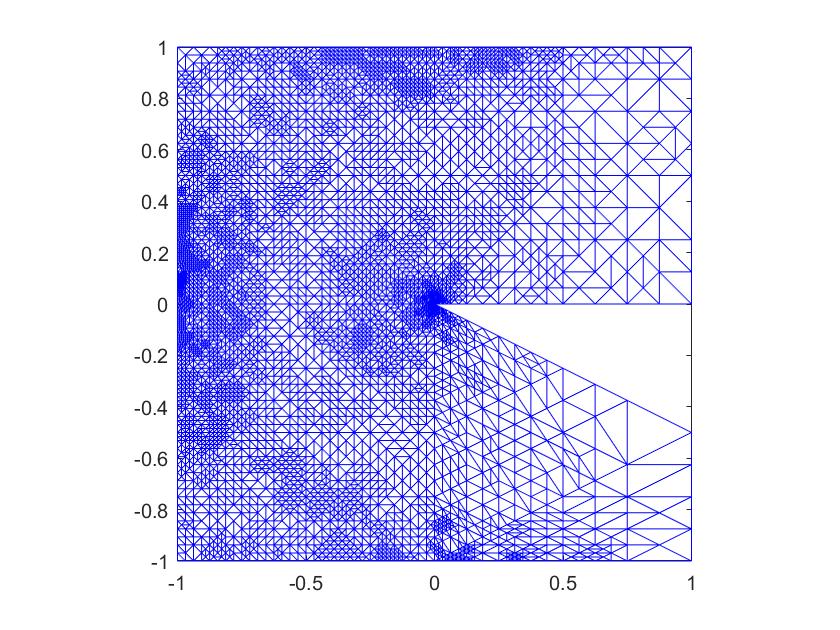}
\caption{Adaptive mesh (18\,507 NDOF) for Ex.~\ref{eg_vke2}}
\label{f:vke2_fig_d}
\end{subfigure}
 \caption{\small Comparison of errors 
 and the complete a posteriori estimator for Examples \ref{eg_vke1} and \ref{eg_vke2}.}
 %\label{vke2_fig}
\end{figure}

\appendix

\section{Outline of the proof of Theorem~\ref{t:vK_apriori}}

The proof follows as in \cite{MallikNataraj2016} with the 
following small modifications.

The first step consists in the proof of the 
discrete inf-sup condition 
\begin{align*}
    \sup_{\lVert D^2_\pw \Theta_h\rVert_{L^2(\Omega)}=1}
      \tilde{\mathcal{A}}_\nc(\Theta_h,\Phi_h)
    \gtrsim \lVert D^2_\pw \Phi_h\rVert_{L^2(\Omega)}
\end{align*}
for the perturbed linearized bilinear form
\begin{align*}
    &\tilde{\mathcal{A}}_\nc(\Theta_h,\Phi_h)\\
      &\qquad
      :=A_\pw(\Theta_h,\Phi_h) + B_\pw(I_\mathcal{M}\Psi,I_h\Theta_h,I_h\Phi_h)
         + B_\pw(\Theta_h,I_hI_\mathcal{M}\Psi,I_h\Phi_h),
\end{align*}
compare also with \cite[Lemma~4.1]{MallikNataraj2016} without 
the nodal interpolation operator.
This is a consequence of the fact that the perturbation 
with respect to the linearized bilinear form 
\begin{align*}
    \mathcal{A}_\nc(\Theta_h,\Phi_h)
      :=A_\pw(\Theta_h,\Phi_h) + B_\pw(\Psi,\Theta_h,\Phi_h)
         + B_\pw(\Theta_h,\Psi,\Phi_h).
\end{align*}
is only in the lower-order term $B_\pw$.

In the second step, it can be shown that the map 
$\mu:\mathcal{M}_0(\tri;\R^2)\to\mathcal{M}_0(\tri;\R^2)$
defined by 
\begin{align*}
    \tilde{\mathcal{A}}_\nc(\mu(\Theta_h),\Phi_h)
    &= F(\Phi_h) + B_\pw(I_\mathcal{M}\Psi,I_h\Theta_h,I_h\Phi_h)\\
    &\qquad\qquad 
      + B_\pw(\Theta_h,I_hI_\mathcal{M}\Psi,I_h\Phi_h)
      - B_\pw(\Theta_h,I_h\Theta_h,I_h\Phi_h)
\end{align*}
maps a ball around $I_\mathcal{M}\Psi$ into itself.
The proof follows the lines of \cite[Theorem~4.2]{MallikNataraj2016} 
employing additionally the property 
\eqref{e:nodal_interpolation_approx}.
Moreover, $\mu$ is (locally around $I_\mathcal{M}\Psi$)
a contraction, which implies the (locally unique)
solvability of the discrete problem.

\bibliographystyle{alpha}
\bibliography{apost_morley}

\begin{thebibliography}{CDNS22}

\bibitem[AF03]{AdamsFournier2003}
Robert~A. Adams and John J.~F. Fournier.
\newblock {\em Sobolev Spaces}, volume 140 of {\em Pure and Applied Mathematics
  (Amsterdam)}.
\newblock Elsevier/Academic Press, Amsterdam, second edition, 2003.

\bibitem[BGS10]{BrennerGudiSung2010}
Susanne~C. Brenner, Thirupathi Gudi, and Li-Yeng Sung.
\newblock An a posteriori error estimator for a quadratic {$C^0$}-interior
  penalty method for the biharmonic problem.
\newblock {\em IMA J. Numer. Anal.}, 30(3):777--798, 2010.

\bibitem[BNS07]{BeiraodaVeigaNiiranenStenberg2007}
L.~{Beir{\~a}o da Veiga}, J.~Niiranen, and R.~Stenberg.
\newblock A posteriori error estimates for the {Morley} plate bending element.
\newblock {\em Numer. Math.}, 106(2):165--179, 2007.

\bibitem[BR80]{BlumRannacher1980}
H.~Blum and R.~Rannacher.
\newblock On the boundary value problem of the biharmonic operator on domains
  with angular corners.
\newblock {\em Math. Methods Appl. Sci.}, 2(4):556--581, 1980.

\bibitem[Bre03]{Brenner2003}
Susanne~C. Brenner.
\newblock {P}oincar{\'e}--{F}riedrichs inequalities for piecewise {{\(H^{1}\)}}
  functions.
\newblock {\em SIAM J. Numer. Anal.}, 41(1):306--324, 2003.

\bibitem[BS08]{BrennerScott2008}
S.~C. Brenner and L.~R. Scott.
\newblock {\em The Mathematical Theory of Finite Element Methods}, volume~15 of
  {\em Texts in Applied Mathematics}.
\newblock Springer, New York, third edition, 2008.

\bibitem[CDNS22]{ChowdhuryDondNatarajShylaja2022}
Sudipto Chowdhury, Asha~K. Dond, Neela Nataraj, and Devika Shylaja.
\newblock \emph{A posteriori} error analysis for a distributed optimal control
  problem governed by the von {K{\'a}rm{\'a}n} equations.
\newblock {\em ESAIM, Math. Model. Numer. Anal.}, 56(5):1655--1686, 2022.

\bibitem[Cia78]{Ciarlet1978}
Philippe~G. Ciarlet.
\newblock {\em The Finite Element Method for Elliptic Problems}, volume~4 of
  {\em Studies in Mathematics and its Applications}.
\newblock North-Holland, Amsterdam, 1978.

\bibitem[Cia22]{Ciarlet_bookII_2022}
Philippe~G. Ciarlet.
\newblock {\em Mathematical elasticity. {Volume} {II}. {Theory} of plates},
  volume~85 of {\em Class. Appl. Math.}
\newblock Philadelphia, PA: Society for Industrial {and} Applied Mathematics
  (SIAM), reprint of the 1997 edition edition, 2022.

\bibitem[CMN20]{CarstensenMallikNataraj2020}
Carsten Carstensen, Gouranga Mallik, and Neela Nataraj.
\newblock Nonconforming finite element discretization for semilinear problems
  with trilinear nonlinearity.
\newblock {\em IMA J. Numer. Anal.}, 41(1):164--205, 2020.

\bibitem[DE12]{DiPietroErn2012}
{Daniele Antonio} {Di Pietro} and Alexandre Ern.
\newblock {\em Mathematical Aspects of Discontinuous {G}alerkin Methods},
  volume~69 of {\em Math\'ematiques \& Applications (Berlin)}.
\newblock Springer, Heidelberg, 2012.

\bibitem[DLZ22]{DuLinZhang2022}
Shaohong Du, Runchang Lin, and Zhimin Zhang.
\newblock Residual-based a posteriori error estimators for mixed finite element
  methods for fourth order elliptic singularly perturbed problems.
\newblock {\em J. Comput. Appl. Math.}, 412:16, 2022.

\bibitem[D{\"o}r96]{Doerfler1996}
Willy D{\"o}rfler.
\newblock A convergent adaptive algorithm for {P}oisson's equation.
\newblock {\em SIAM J. Numer. Anal.}, 33(3):1106--1124, 1996.

\bibitem[Gal15]{Gallistl2015}
Dietmar Gallistl.
\newblock Morley finite element method for the eigenvalues of the biharmonic
  operator.
\newblock {\em IMA J. Numer. Anal.}, 35(4):1779--1811, 2015.

\bibitem[Gri92]{Grisvard1992}
P.~Grisvard.
\newblock {\em Singularities in Boundary Value Problems}, volume~22 of {\em
  Recherches en Math\'ematiques Appliqu\'ees}.
\newblock Masson, Paris, 1992.

\bibitem[GT24]{GallistlTian2024}
D.~Gallistl and S.~Tian.
\newblock A posteriori error estimates for nonconforming discretizations of
  singularly perturbed biharmonic operators.
\newblock {\em SMAI J. Comput. Math.}, 10:355--372, 2024.

\bibitem[Gud10]{Gudi2010}
Thirupathi Gudi.
\newblock A new error analysis for discontinuous finite element methods for
  linear elliptic problems.
\newblock {\em Math. Comp.}, 79(272):2169--2189, 2010.

\bibitem[HS09]{HuShi2009}
Jun Hu and Zhongci Shi.
\newblock A new a posteriori error estimate for the {Morley} element.
\newblock {\em Numer. Math.}, 112(1):25--40, 2009.

\bibitem[HSX12]{HuShiXu2012}
Jun Hu, Zhongci Shi, and Jinchao Xu.
\newblock Convergence and optimality of the adaptive {M}orley element method.
\newblock {\em Numer. Math.}, 121(4):731--752, 2012.

\bibitem[Kel75]{Keller1975}
H.~B. Keller.
\newblock Approximation methods for nonlinear problems with application to
  two-point boundary value problems.
\newblock {\em Math. Comput.}, 29:464--474, 1975.

\bibitem[Kni67]{Knightly1967}
G.~H. Knightly.
\newblock An existence theorem for the von {K{\'a}rm{\'a}n} equations.
\newblock {\em Arch. Ration. Mech. Anal.}, 27:233--242, 1967.

\bibitem[MN16]{MallikNataraj2016}
Gouranga Mallik and Neela Nataraj.
\newblock A nonconforming finite element approximation for the von {Karman}
  equations.
\newblock {\em ESAIM, Math. Model. Numer. Anal.}, 50(2):433--454, 2016.

\bibitem[Mor68]{Morley1968}
L.S.D. Morley.
\newblock The triangular equilibrium element in the solution of plate bending
  problems.
\newblock {\em Aeronaut.Quart.}, 19:149--169, 1968.

\bibitem[NTW01]{NilssenTaiWinther2001}
Trygve~K. Nilssen, Xue-Cheng Tai, and Ragnar Winther.
\newblock A robust nonconforming {{\(H^2\)}}-element.
\newblock {\em Math. Comput.}, 70(234):489--505, 2001.

\bibitem[Osw94]{Oswald1994}
Peter Oswald.
\newblock {\em Multilevel finite element approximation}.
\newblock Teubner Skripten zur Numerik. B. G. Teubner, Stuttgart, 1994.

\bibitem[Ver98]{Verfuerth1998}
R.~Verf{\"u}rth.
\newblock Robust a posteriori error estimators for a singularly perturbed
  reaction-diffusion equation.
\newblock {\em Numer. Math.}, 78(3):479--493, 1998.

\bibitem[Ver13]{Verfuerth2013}
R{\"u}diger Verf{\"u}rth.
\newblock {\em A posteriori error estimation techniques for finite element
  methods}.
\newblock Numer. Math. Sci. Comput. Oxford: Oxford University Press, 2013.

\bibitem[WM07]{WangMeng2007}
Ming Wang and Xiangrui Meng.
\newblock A robust finite element method for a 3-{D} elliptic singular
  perturbation problem.
\newblock {\em J. Comput. Math.}, 25(6):631--644, 2007.

\bibitem[WXH06]{WangXuHu2006}
Ming Wang, Jin-chao Xu, and Yu-cheng Hu.
\newblock Modified {M}orley element method for a fourth order elliptic singular
  perturbation problem.
\newblock {\em J. Comput. Math.}, 24(2):113--120, 2006.

\bibitem[ZW08]{ZhangWang2008}
Shuo Zhang and Ming Wang.
\newblock A posteriori estimator of nonconforming finite element method for
  fourth order elliptic perturbation problems.
\newblock {\em J. Comput. Math.}, 26(4):554--577, 2008.

\end{thebibliography}

\end{document}